\numberwithin{equation}{section}
\newcommand{\IC}{\mathbb C}
\newcommand{\ID}{\mathbb D}
\newcommand{\IT}{\mathbb T}
\newcommand{\IR}{\mathbb R}
\newcommand{\br}{\mathbf r}
\newcommand{\bp}{\mathbf p}
\newcommand{\bu}{\mathbf u}
\newcommand{\bs}{\mathbf s}
\newcommand{\bt}{\mathbf t}
\newcommand{\bh}{\mathbf h}
\newcommand{\bF}{\mathbf r}
\newcommand{\bb}{\mathbf b}
\newcommand{\be}{\mathbf e}
\newcommand{\by}{\mathbf y}
\newcommand{\Ra}{\Rightarrow}
\newcommand{\w}{\omega}
\newcommand{\Supp}{\mathsf{Supp}}
\newcommand{\supp}{\mathsf{supp}}
\newcommand{\zz}{\mathsf{z}}
\newcommand{\IZ}{\mathbb Z}
\newcommand{\II}{\ddot I}
\newcommand{\ii}{\II}
\newcommand{\JJ}{\ddot J}
\newcommand{\Rho}{P}
\newcommand{\Tau}{T}
\newcommand{\e}{\varepsilon}
\newtheorem{theorem}{Theorem}[section]
\newtheorem{lemma}[theorem]{Lemma}
\newtheorem{problem}[theorem]{Problem}
\newtheorem{proposition}[theorem]{Proposition}
\theoremstyle{definition}
\newtheorem{definition}[theorem]{Definition}
\newtheorem{remark}[theorem]{Remark}
\newtheorem{example}[theorem]{Example}
\newtheorem{exercise}[theorem]{Exercise}
\title[Isometries between smooth $2$-dimensional Banach spaces]{Any isometry between the spheres of absolutely smooth $2$-dimensional Banach spaces is linear} 
\author{Taras Banakh} 
\address{Ivan Franko National University of Lviv (Ukraine) and Jan Kochanowski University in Kielce (Poland)}
\email{t.o.banakh@gmail.com}
\subjclass{46B04, 46B20, 53A04, 26A46, 26A24, 46E35}
\keywords{Banach space, Tingley's Problem, isometry, sphere, natural parameterization} 
\begin{document}
\begin{abstract} A 2-dimensional Banach space $X$ is called {\em absolutely smooth} if its unit sphere is the image of the real line under a differentiable function $\br:\mathbb R\to S_X$ whose derivative is locally absolutely continuous and has $\|\br'(s)\|=1$ for all $s\in\IR$. We prove that any isometry $f:S_X\to S_Y$ between the unit spheres of absolutely smooth Banach spaces $X,Y$ extends to a linear isometry $\bar f:X\to Y$ of the Banach spaces $X,Y$. This answers the famous Tingley's problem in the class of absolutely  smooth $2$-dimensional Banach spaces.
\end{abstract}
\maketitle

\section{Introduction}

According to a classical result of Mazur and Ulam \cite{MU} (extended by Mankiewicz \cite{Man}), every bijective isometry $f:B_X\to B_Y$ between the unit balls of Banach spaces $X,Y$ extends to a linear bijective isometry of the spaces $X,Y$.
Tingley \cite{Tingley} asked in 1987 if the unit balls in this Mazur--Ulam--Mankiewicz  result can be replaced by the unit spheres:

\begin{problem}[Tingley]\label{pr:Tingley} Let $f:S_X\to S_Y$ be a bijective isometry between the unit spheres of Banach spaces $X,Y$. Can $f$ be extended to a linear isometry of the Banach spaces $X,Y$?
\end{problem}

Here for a Banach space $(X,\|\cdot\|)$ by $B_X=\{x\in X:\|x\|\le 1\}$ and $S_X=\{x\in X:\|x\|=1\}$ we denote the unit ball and the unit sphere of $X$, respectively.

Motivated by Tingley's Problem, the following property of Banach spaces was introduced by Cheng and Dong \cite{CD}, and then widely used in the literature devoted to Tingley's problem.

\begin{definition} A Banach space $X$ has {\em the Mazur--Ulam property} if every bijective isometry $f:S_X\to S_Y$ from the unit sphere of $X$ to the unit sphere of any Banach space $Y$ extends to a linear isometry between the Banach spaces $X,Y$.
\end{definition}

So, Tingley's Problem~\ref{pr:Tingley} has an affirmative solution if and only if all Banach spaces have the Mazur--Ulam property.  
Many classical Banach spaces do have the Mazur--Ulam property, see  the surveys \cite{Ding}, \cite{Per}, \cite{YZ} and references therein. According to \cite{KM}, every polyhedral finite-dimensional Banach space has the Mazur--Ulam property. Surprisingly, it is still not known if every $2$-dimensional Banach space has the Mazur--Ulam property. The latter problem was discussed in the papers \cite{DZ}, \cite{San}, \cite{Tan}, \cite{WX}. A substantial progress in resolving the $2$-dimensional case of Tingley's problem was made by Javier Cabello S\'anchez who proved in \cite{San}  that an isometry $f:S_X\to S_Y$ of the spheres of $2$-dimensional Banach spaces extends to a linear isometry of $X$ and $Y$ if and only if $f$ is linear on some nonempty relatively open subset $U\subseteq S_X$. The linearity of $f$ on $U$ means that $f(ax+by)=a\cdot f(x)+b\cdot f(y)$ for any $x,y\in U$ and any real numbers $a,b$ with $ax+by\in U$. Generalizing the result of Wang and X.~Huan \cite{WX}, Cabello S\'anchez  also proved in \cite[3.8]{San} that a two-dimensional Banach space $X$ has the Mazur--Ulam property if its unit ball $B_X$ is not strictly convex. Therefore, the $2$-dimensional case of Tingley's  Problem remains open only for strictly convex Banach spaces. It is also open for smooth Banach spaces. Let us recall \cite[p.60]{LT} that a Banach space $X$ is {\em smooth} if for each $x\in S_X$ there exists a unique linear continuous functional $x^*:X\to\IR$ such that $x^*(x)=1=\|x^*\|$. Geometrically this means that the unit ball $B_X$ has a unique supporting hyperplane at $x$.

In this paper we shall present a solution of Tingley's Problem~\ref{pr:Tingley} in a  subclass of smooth $2$-dimensional Banach spaces consisting of absolutely smooth Banach spaces. Absolutely smooth Banach spaces are introduced in Definition~\ref{d:ss} below. But first we need to recall some notions from Real Analysis.

Let $(X,\|\cdot\|)$ be a Banach space and $k$ be a positive integer.  A function $\br :U\to X$ defined on an open subset $U\subseteq\IR$ is
\begin{itemize}
\item {\em of bounded variation} if there exists a real number $E$ such that 
for any points $x_1<y_1<x_2<y_2<\dots<x_n<y_n$ in $U$  we have $\sum_{i=1}^n\|\br(y_i)-\br(x_i)\|\le E$;
\item {\em absolutely continuous} if  for any $\e>0$ there exists $\delta>0$ such that for any points $x_1<y_1<x_2<y_2<\dots<x_n<y_n$ in $U$ with $\sum_{i=1}^n(y_i-x_i)<\delta$ we have $\sum_{i=1}^n\|\br(y_i)-\br(x_i)\|<\e$;
\item {\em locally absolutely continuous} if each point $x\in U$ has an open neighborhood $O_x\subseteq U$ such that the restriction 
$\br{\restriction}O_x$ is absolutely continuous;
\item {\em $C^k$-smooth} if $\br$ has continuous $k$-th derivative $\br^{(k)}$;
\item {\em $AC^{1}$-smooth} if the derivative $\br'$ is locally absolutely continuous.
\end{itemize}

Now we apply these function classes to define some smoothness properties of $2$-dimensional Banach spaces. 

\begin{definition}\label{d:ss} A $2$-dimensional Banach space $(X,\|\cdot\|)$ is defined to be {\em $C^k$-smooth} (resp. {\em $AC^1$-smooth} or else {\em absolutely smooth}) if there exists a $C^k$-smooth (resp. $AC^1$-smooth) surjective map $\br:\IR\to S_X\subseteq X$ such that $\|\br'(s)\|=1$ for all $s\in\IR$.\end{definition}

For any $2$-dimensional Banach space $X$ these smoothness properties relate as follows (for the last equivalence, see Lemma~\ref{l:r}(8)).
$$
\xymatrix{
\mbox{$C^2$-smooth}\ar@{=>}[r]&\mbox{$AC^1$-smooth}\ar@{=}[r]&\mbox{absolutely smooth}\ar@{=>}[r]&\mbox{$C^1$-smooth}\ar@{<=>}[r]&\mbox{smooth}
}
$$


The main result of this paper is the following theorem that yields an affirmative answer to Tingley's problem for absolutely smooth $2$-dimensional Banach spaces.

\begin{theorem}\label{t:main} Each isometry $f:S_X\to S_Y$ between the unit spheres of two absolutely smooth $2$-dimensional Banach spaces extends to a linear isometry $\bar f:X\to Y$ between the Banach spaces $X,Y$.
\end{theorem}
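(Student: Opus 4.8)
The plan is to exploit the natural (arc-length) parameterizations provided by absolute smoothness and to reduce the statement, via the criterion of Cabello S\'anchez quoted in the introduction, to the production of a single linear map carrying one sphere onto the other. Fix natural parameterizations $\br_X:\IR\to S_X$ and $\br_Y:\IR\to S_Y$ with $\|\br_X'\|\equiv\|\br_Y'\|\equiv 1$ and $\br_X',\br_Y'$ locally absolutely continuous. Since $f$ is a bijective isometry of the spheres, which are topological circles, it induces a monotone parameter change $g:\IR\to\IR$ (well defined modulo the periods) by $f\circ\br_X=\br_Y\circ g$. The first goal is to show that $g$ is affine with slope $\pm1$.

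The key observation for this is that, because $\|\br_X'\|\equiv1$, the total variation of $\br_X$ over an interval $[s,t]$ equals exactly $t-s$; that is, the arc length of $\br_X([s,t])$ in the norm of $X$ is $t-s$. Total variation is the supremum of the chord sums $\sum_i\|\br_X(s_i)-\br_X(s_{i-1})\|$ over partitions, and $f$ preserves every such summand and carries partitions of an arc to partitions of its image. Hence the arc length of $f(\br_X([s,t]))=\br_Y(g([s,t]))$ in the norm of $Y$ is again $t-s$, while it also equals $|g(t)-g(s)|$ by the same computation for $\br_Y$. Therefore $|g(t)-g(s)|=|t-s|$ for all $s,t$, so $g(s)=\sigma s+c$ for some $\sigma\in\{-1,1\}$ and $c\in\IR$. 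Replacing $\br_Y$ by the (again natural) parameterization $s\mapsto\br_Y(\sigma s+c)$, we may assume $g=\mathrm{id}$, so that $f\circ\br_X=\br_Y$ and \[\|\br_X(s)-\br_X(t)\|=\|\br_Y(s)-\br_Y(t)\|\quad\text{for all }s,t\in\IR.\] In particular $f$ sends antipodes to antipodes.

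It remains to promote this coincidence of chord-distance functions to a linear equivalence of the two curves. Here the hypothesis $\|\br'\|\equiv1$ pays off again: it forces $\br'(s)\in S_X$ for every $s$, so $\{\br_X(s),\br_X'(s)\}$ is a moving frame, and differentiating $\|\br_X'\|\equiv1$ (which is legitimate a.e. because $\br_X'$ is locally absolutely continuous) shows that the supporting functional at $\br_X'(s)$ annihilates $\br_X''(s)$. Consequently $\br_X$ satisfies, almost everywhere, a Frenet-type equation \[\br_X''(s)=p_X(s)\,\br_X(s)+q_X(s)\,\br_X'(s),\] and likewise for $\br_Y$. I would then define the linear map $L:X\to Y$ by $L\br_X(0)=\br_Y(0)$ and $L\br_X'(0)=\br_Y'(0)$ on the frame basis, so that $\gamma:=L\circ\br_X$ solves the same equation as $\br_X$ with coefficients $p_X,q_X$ and with the same initial frame as $\br_Y$. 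The theorem would follow from Carath\'eodory uniqueness for this linear system once one knows $p_X=p_Y$ and $q_X=q_Y$, for then $\gamma=\br_Y$, whence $L(S_X)=S_Y$ and $L$ is a linear isometry extending $f$; alternatively, the same conclusion on any short arc makes $f$ linear there and Cabello S\'anchez's criterion finishes the proof.

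The main obstacle is precisely the identity $p_X=p_Y$, $q_X=q_Y$, i.e. showing that the Frenet coefficients are invariants of the intrinsic chord-distance function $\rho(s,t)=\|\br(s)-\br(t)\|$ common to both spaces. I would extract them from the diagonal asymptotics of $\rho$: writing $\br(s+h)-\br(s)$ in the frame at $s$ and expanding, the vanishing of the first-order normal term reflects $\|\br'\|\equiv1$, while the successive coefficients of $\rho(s,s+h)$ in powers of $h$ recover $p(s)$ and $q(s)$ through the (smooth) local geometry of the norm, which is itself encoded in $\rho$ because $S_X-S_X=2B_X$. Carrying this out rigorously under only almost-everywhere second differentiability — justifying the second-order expansion, the a.e. Frenet equation, and the passage to a Carath\'eodory solution — is the delicate analytic heart of the argument and the place where absolute smoothness is indispensable.
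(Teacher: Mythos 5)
Your overall skeleton is sound and is in fact the paper's own plan: the arc-length argument reducing to $f\circ\br_X=\br_Y$ corresponds to Lemmas~\ref{l:M} and \ref{l:iso}, the a.e.\ Frenet equation $\br''=-\rho\,\br+\tau\,\br'$ is the paper's equation~(\ref{eq}), and the frame-matching linear map plus Carath\'eodory uniqueness is exactly Theorem~\ref{t:main2} and Section~\ref{s:main}. The genuine gap is the step you compress into one sentence, namely that ``the successive coefficients of $\rho(s,s+h)$ in powers of $h$ recover $p(s)$ and $q(s)$.'' This is false for the tangential coefficient $q=\tau$, and repairing it is where most of the paper's work (Sections~\ref{s:RT} and \ref{s:rt}) is concentrated.

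Here is why the claim fails. A perturbation of $\br(s)$ in the direction $\br'(s)$ is, to first order, a motion along the sphere, so tangential terms are absorbed into reparameterizations and cancel out of all chord-length asymptotics. Quantitatively, the annihilation property you yourself observe (the supporting functional $g_s$ at $\br'(s)$ kills $\br''(s)$) forces $\|\br(s+h)-\br(s)\|=|h|+o(h^2)$: the coefficient of $h^2$ vanishes identically and carries no information at all. The near-antipodal chords recover the radial curvature, $\rho(s)=\lim_{\e\to0}\e^{-2}\bigl(2-\|\br(s+\e)-\br(s+L-\e)\|\bigr)$ (Lemma~\ref{l:rho}), but there too $\tau$ cancels, because $\br(s)+\tfrac12\tau(s)\e^2\br'(s)+o(\e^2)=\br(s+\tfrac12\tau(s)\e^2)+o(\e^2)$ is again a unit vector. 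The symmetric chords $\|\br(s+\e)-\br(s-\e)\|$ first deviate from $2\e$ at order $\e^3$, and the coefficient there is proportional to $\rho(s)\,(\psi'(s)-1)$, where $\psi=\tau/\rho$ is the quotient curvature (Lemma~\ref{l:rho-positive}). So local metric asymptotics determine $\rho$ and the \emph{derivative} $\psi'$, but never $\tau$ itself; $\tau=\rho\psi$ can only be obtained by integrating $\psi'$, and the constant of integration is not local data. The paper pins that constant down by a global argument: the Wronskian (Abel--Liouville/Floquet) identity $\int_0^L\tau(s)\,ds=0$ (Lemma~\ref{l:tau0}), combined with a separate treatment of parameters where $\rho$ vanishes (the $I$-null/$I$-positive dichotomy of Lemmas~\ref{l:I-null} and \ref{l:I-positive}) and with the fact that $\tau/\rho$ extends continuously across such points via the supercurvatures of Section~\ref{s:RT}. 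None of these ingredients appears in your plan, and without them the map $L$ cannot be shown to satisfy $L\circ\br_X=\br_Y$. Finally, the appeal to $S_X-S_X=2B_X$ to say the norm is ``encoded'' in the chord function is circular: equality of the chord functions of $X$ and $Y$ is precisely your hypothesis, while deciding which vectors realize those distances is the content of the theorem, not an available input.
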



The proof of Theorem~\ref{t:main} essentially uses the absolute smoothness of both Banach spaces. This motivates the following natural problem.

\begin{problem}\label{prob:MU} Has each absolutely smooth  $2$-dimensional Banach space the Mazur--Ulam property? 
\end{problem}

Theorem~\ref{t:main} reduces Problem~\ref{prob:MU} to the following problem.

\begin{problem} Let $f:S_X\to S_Y$ be an isometry between the unit spheres of $2$-dimensional Banach spaces $X$ and $Y$. Does the absolute smoothness of the Banach space $X$ implies the absolute smoothness of the Banach space $Y$?
\end{problem}

Theorem~\ref{t:main} will be proved in Section~\ref{s:main} after long preparatory work made in Sections~\ref{s:p}--\ref{s:rt}. In Section~\ref{s:m} we introduce the notion of a natural parameterization of the unit sphere of a smooth $2$-dimensional Banach space and prove that such a parameterization is unique up to an isometric shift in the domain. In Section~\ref{s:p}, given a $2$-dimensional Banach space with a fixed basis (such Banach spaces are called 2-based), we introduce the polar parameterization $\bp$ of the unit sphere $S_X$ of $X$ and establish some smoothness properties of this parameterization. In Section~\ref{s:n} we modify the polar parameterization $\bp$ of $S_X$ to the natural parameterization $\br$ of $S_X$, which is a parameterization whose tangent vector has length 1 almost everywhere.
 In Section~\ref{s:c} we introduce the radial and tangential curvatures of the sphere $S_X$. Those are functions $\rho$ and $\tau$ such that $\rho\br-\tau\br'+\br''=0$. 
In Theorem~\ref{t:main2}  we prove that the radial and tangential curvatures determine an absolutely smooth 2-dimensional Banach space uniquely up to an insometry, and will characterize 2-dimensional Hilbert spaces as the unique smooth Banach spaces with constant radial and tangential curvatures (equal to $1$ and $0$, respectively). In Section~\ref{s:RT}, for a smooth 2-based Banach space $X$ we introduce the radial and tangential supercurvatures $\Rho$ and $\Tau$, which are continuous functions such that ${-}\Rho{\cdot}\br+\Tau{\cdot}\br'=\br'\circ \varphi$, where $\varphi:\IR\to\IR$ is a continuous non-decreasing function such that $\br'=\br\circ\varphi$. We observe that the function $\frac{\tau}{\rho}$ extends to the continuous function $\psi=\frac\Tau\Rho$, called the {\em quotient curvature} of $X$. In Section~\ref{s:rt} we prove that the metric of the unit sphere of an absolutely smooth 2-based Banach space uniquely determines the radial curvature $\rho$ and  the derivative $\psi'$ of the quotient curvature $\psi=\frac{\Tau}{\Rho}$. Then we apply one result of Floquet Theory on periodic solutions of second order differential equations to recover the quotient curvature $\psi$  from its derivative $\psi'$. Knowing the radial and quotient curvatures we finally calculate  the tangential curvatures $\tau=\rho\cdot\psi$. Thus we conclude that the radial and tangential curvatures of an absolutely smooth 2-based Banach space $X$ can be uniquely recovered from the metric  of the unit sphere. Together with the Uniqueness Theorem~\ref{t:main2} this yields the proof of Theorem~\ref{t:main}, presented in the final Section~\ref{s:main}. 

The referee of the paper recommended to add that the absolute smoothness of a  2-dimensional Banach space $X$ is the minimum requirement needed for recovering the natural parameterization of $X$ from its second derivative, and there is no way to improve the main result using the same ideas.
\smallskip




\section{Some tools from Real Analysis}\label{s:RA}

Throughout the paper we shall exploit some standard tools of Real Analysis. In this section we recall  some facts from the Real Analysis that will be used in the sequel. 

By Lebesgue Theorems \cite[7.1.13 and 7.1.15]{RA},  each locally absolutely continuous function $f:\IR\to X$ is differentiable almost everywhere and $f(b)-f(a)=\int_a^bf'(x)\,dx$ for any $a<b$. Moreover, $\int_a^b|f'(x)|dx<\infty$. So, a locally absolutely continuous function can be recovered from its derivative (which is a locally integrable function). Let us recall that a function $f:\IR\to\IR$ is {\em locally integrable} if it is measurable and $\int_a^b|f(x)|\,dx$ is finite for any $a<b$.

A point $x\in\IR$ is called a {\em Lebesgue point} of a locally integrable function $f:\IR\to\IR$ if $$\lim_{\e\to 0}\frac1\e\int_0^\e|f(x+t)-f(x)|\,dt=0.$$

The following classical result of Lebesgue can be found in \cite[7.1.20 and 7.1.21]{RA}.

\begin{lemma}[Lebesgue]\label{l:Lebesgue} Let $f:\IR\to\IR$ be a locally integrable function.  Almost every point of the real line is a Lebesgue point of $f$, and each Lebesgue point of $f$ is a differentiability point of the function $F(x)=\int_0^x f(t)dt$.
\end{lemma}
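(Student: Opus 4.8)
The plan is to prove the two assertions separately: the implication ``Lebesgue point $\Rightarrow$ differentiability point of $F$'' is essentially immediate from the definition, whereas the statement ``almost every point is a Lebesgue point'' is the genuine content and I would establish it via the Hardy--Littlewood maximal function together with a density argument.

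For the second assertion, suppose $x$ is a Lebesgue point of $f$. Since the integrand is nonnegative, the defining averaging condition directly controls the difference quotient of $F$: for $h\ne0$,
$$\Big|\frac{F(x+h)-F(x)}{h}-f(x)\Big|=\Big|\frac1h\int_0^h\big(f(x+t)-f(x)\big)\,dt\Big|\le\frac1{|h|}\Big|\int_0^h|f(x+t)-f(x)|\,dt\Big|,$$
and for $h$ of either sign the right-hand side is (up to the sign of $h$) one of the one-sided averages appearing in the definition, hence tends to $0$ as $h\to0$. Therefore $F'(x)=f(x)$ and $x$ is a differentiability point of $F$.

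For the first assertion, I would first localize: it suffices to show that almost every point of each bounded interval $(-N,N)$, $N\in\IN$, is a Lebesgue point, and since the condition at $x\in(-N,N)$ involves only small $\e$, only the values of $f$ on $(-N-1,N+1)$ are relevant; so I may replace $f$ by the integrable function $f\cdot\chi_{(-N-1,N+1)}$ and assume $f\in L^1(\IR)$. Next I would introduce the Hardy--Littlewood maximal function $Mf(x)=\sup_{r>0}\frac1{2r}\int_{x-r}^{x+r}|f(t)|\,dt$ and establish its weak-type $(1,1)$ inequality $|\{Mf>\lambda\}|\le\frac{C}{\lambda}\|f\|_1$. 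With this in hand, I would argue by density. Consider the symmetric averaging quantity $\Omega f(x)=\limsup_{r\to0}\frac1{2r}\int_{x-r}^{x+r}|f(t)-f(x)|\,dt$; because the integrand is nonnegative, $\Omega f(x)=0$ is equivalent to the one-sided condition of the definition (taken for $\e$ of both signs). For any continuous $g$ one has $\Omega g\equiv0$ by uniform continuity, and writing $h=f-g$ gives $\Omega f\le\Omega h\le Mh+|h|$. Choosing continuous compactly supported $g$ with $\|f-g\|_1$ arbitrarily small (density of $C_c$ in $L^1$) and combining the maximal inequality with Chebyshev's inequality yields $|\{\Omega f>\lambda\}|\le\frac{C'}{\lambda}\|f-g\|_1$ for every $\lambda>0$, which forces $|\{\Omega f>\lambda\}|=0$. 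Letting $\lambda$ run through a sequence tending to $0$ shows $\Omega f=0$ almost everywhere, i.e. almost every point is a Lebesgue point.

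The main obstacle is the weak-type maximal inequality: the rest of the argument is bookkeeping, but bounding the measure of the set where the maximal averages are large genuinely requires a covering argument (a Vitali-type selection, or the rising-sun lemma for a one-sided maximal function), since the averaging intervals realizing the supremum overlap in an uncontrolled way. Once that single estimate is secured, the interplay of the maximal function, density of continuous functions, and Chebyshev's inequality closes the proof.
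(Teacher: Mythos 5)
Your proof is correct. Note, however, that the paper does not prove this lemma at all: it is quoted as a classical result of Lebesgue with a pointer to Theorems 7.1.20 and 7.1.21 of the textbook \cite{RA}, so there is no internal argument to compare yours against. Your proposal is the standard modern proof: the easy half (Lebesgue point implies differentiability of $F$) is handled exactly as it should be, including the correct treatment of negative $h$ and of the two-sided limit $\e\to 0$ implicit in the paper's definition; the substantive half combines the Hardy--Littlewood maximal function with density of compactly supported continuous functions in $L^1$, and your reductions are sound --- the localization from locally integrable to integrable $f$ is legitimate because the Lebesgue-point condition at $x$ only sees $f$ near $x$, and the equivalence between vanishing of the symmetric averages $\frac1{2r}\int_{x-r}^{x+r}|f(t)-f(x)|\,dt$ and vanishing of both one-sided averages in the paper's definition holds because each one-sided average is nonnegative and at most twice the symmetric one. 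The single ingredient you leave unproved is the weak-type $(1,1)$ maximal inequality, which you correctly identify as the one place where a covering argument (Vitali selection or rising-sun lemma) is indispensable; since that inequality is itself a classical, citable result, this is not a genuine gap. In effect, your proposal supplies a self-contained route to a fact the paper simply imports by citation, which is what the paper's subsequent arguments (e.g.\ Lemmas \ref{l:Taylor}, \ref{l:rho}, \ref{l:tau}) actually rely on.
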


We say that a subset $A$ of the real line has {\em full measure} in $\IR$ if the complement $\IR\setminus A$ has Lebesgue measure zero. Lemma~\ref{l:Lebesgue} implies that for any locally integrable function $f:\IR\to\IR$ the set $\Omega_f$ of Lebesgue points of the function $f$ has full measure in the real line.
\smallskip

For a positive real number $L$, a function $f:\IR\to\IR$ is called {\em $L$-periodic} if $f(x+L)=f(x)$ for all $x\in\IR$. We shall need the following (known) property of $L$-periodic functions.

\begin{lemma}\label{l:perint} If a locally integrable function $f:\IR\to\IR$ is $L$-periodic, then for any $a\in\IR$, $$\int_a^{a+L}f(t)dt=\int_0^L f(t)\,dt.$$
\end{lemma}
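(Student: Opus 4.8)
The plan is to show that the function $\Phi(a):=\int_a^{a+L}f(t)\,dt$ is constant on $\IR$, so that in particular $\Phi(a)=\Phi(0)=\int_0^L f(t)\,dt$ for every $a$. There are two natural routes, and I would lead with the more elementary one, based on additivity of the integral and a change of variables; periodicity of $f$ does all the work.

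First I would treat the case $a\in[0,L]$. Here $a\le L\le a+L$, so additivity of the Lebesgue integral across the splitting point $L$ gives
$$\int_a^{a+L}f(t)\,dt=\int_a^{L}f(t)\,dt+\int_L^{a+L}f(t)\,dt.$$
In the last integral I substitute $t=s+L$; since $f$ is $L$-periodic, $f(s+L)=f(s)$, and the limits $t=L,\ t=a+L$ become $s=0,\ s=a$, whence $\int_L^{a+L}f(t)\,dt=\int_0^a f(s)\,ds$. Adding the two pieces yields $\int_a^L f+\int_0^a f=\int_0^L f$, as desired. The substitution is legitimate for a merely locally integrable $f$ because all the integrals involved are taken over compact intervals and are therefore finite.

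To pass from $a\in[0,L]$ to arbitrary $a\in\IR$, I would invoke periodicity once more: writing $a=nL+a_0$ with $n\in\IZ$ and $a_0\in[0,L)$, the substitution $t=s+nL$ together with the identity $f(s+nL)=f(s)$ (obtained by iterating the defining $L$-periodicity) gives $\int_a^{a+L}f(t)\,dt=\int_{a_0}^{a_0+L}f(s)\,ds$, which by the previous paragraph equals $\int_0^L f$.

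Alternatively — and perhaps cleaner conceptually — I could write $\Phi(a)=G(a+L)-G(a)$, where $G(x)=\int_0^x f(t)\,dt$ is locally absolutely continuous with $G'=f$ almost everywhere by the Lebesgue theorems recalled above. Then $\Phi$ is itself locally absolutely continuous, and at every common differentiability point of $x\mapsto G(x+L)$ and $G$ one has $\Phi'(a)=f(a+L)-f(a)=0$ by periodicity; since this holds almost everywhere and $\Phi$ is locally absolutely continuous, $\Phi$ is constant. I do not anticipate any genuine obstacle: the only points requiring mild care are the bookkeeping of the integration limits when $a$ lies outside $[0,L]$ and the justification that the change of variables remains valid for a merely locally integrable (rather than continuous) integrand.
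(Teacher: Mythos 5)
Your proof is correct. Note that the paper itself states Lemma~\ref{l:perint} without proof, introducing it only as a ``known'' property of $L$-periodic functions, so there is no argument of the author's to compare against; your write-up simply supplies what the paper omits. Your primary route (splitting $\int_a^{a+L}$ at the point $L$ for $a\in[0,L]$, using the translation $t=s+L$ together with translation invariance of the Lebesgue integral, and then reducing a general $a$ to $a_0\in[0,L)$ via $a=nL+a_0$) is the standard textbook argument and is complete; your alternative argument, writing $\Phi(a)=G(a+L)-G(a)$ with $G(x)=\int_0^x f$ and using that $\Phi$ is locally absolutely continuous with $\Phi'=0$ almost everywhere, is also sound and fits naturally with the Lebesgue-theoretic toolkit the paper recalls in Section~\ref{s:RA}.
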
 


\section{A natural parameterization of the sphere of a $2$-dimensional Banach space}\label{s:m}

Let $(X,\|\cdot\|)$ be a $C^1$-smooth $2$-dimensional Banach space. Then the unit  sphere $S_X=\{x\in X:\|x\|=1\}$ of $X$ is the image of the real line under a $C^1$-smooth map $\br:\IR\to S_X$ such that $\|\br'(s)\|=1$ for every $s\in\IR$. Such a  map $\br$ is called {\em a natural parameterization} of $S_X$. In this section we shall show that a natural parameterization is unique up to an isometry of the real line.
To prove this fact we investigate the relation of natural parameterizations to the intrinsic metrics on  connected subsets of $S_X$.

First we recall the necessary information on intrinsic distances. Let $(M,d)$ be a metric space and $\e$ be a positive real number. A sequence of points $x_0,\dots,x_n\in M$ is called an {\em $\e$-chain} in $M$ if $d(x_{i-1},x_i)<\e$ for all $i\in\{1,\dots,n\}$. For any points $x,y\in M$ let
 $$\breve d_\e(x,y)=\inf\Big(\Big\{\sum_{i=1}^nd(x_{i-1},x_i):\mbox{$x=x_0,\dots,x_n=y$ is an $\e$-chain in $M$}\Big\}\cup\big\{\infty\big\}\Big).$$
It is easy to see that the function $\breve d_\e:M\times M\to[0,\infty]$ has the following properties for any points $x,y,z\in M$:
\begin{itemize}
\item $d(x,y)\le\breve d_\e(x,y)$;
\item $\breve d_\e(x,y)=0$ if and only if $x=y$;
\item $\breve d_\e(x,y)=\breve d_\e(y,x)$;
\item $\breve d_\e(x,z)\le\breve d_\e(x,y)+\breve d_\e(y,z)$.
\end{itemize}
In the last item we assume that $r+\infty=\infty=\infty+r$ for any $r\in[0,\infty]$.

For any $x,y\in M$, the finite or infinite number
$$\breve d(x,y)=\sup_{\e>0}\breve d_\e(x,y)$$ is called the {\em intrinsic distance} between the points $x$ and $y$ in the metric space $M$. If $\breve d(x,y)<\infty$ for any points $x,y\in M$, then the intrinsic distance $\breve d$ is called the {\em intrinsic metric} of $M$.

The properties of the distances $\breve d_\e$ for $\e>0$ imply the analogous properties of the intrinsic distance $\breve d$:
\begin{itemize}
\item $d(x,y)\le\breve d(x,y)$;
\item $\breve d(x,y)=0$ if and only if $x=y$;
\item $\breve d(x,y)=\breve d(y,x)$;
\item $\breve d(x,z)\le\breve d(x,y)+\breve d(y,z)$;
\end{itemize}
for any $x,y,z\in M$.


\begin{lemma}\label{l:M} Let $X$ be a $C^1$-smooth $2$-dimensional Banach space and $\br:\IR\to S_X$ be a natural parameterization of its unit sphere. For any closed interval $[a,b]\subseteq \IR$ with $\br([a,b])\ne S_X$, the restriction $\br{\restriction}[a,b]$ is an isometry of $[a,b]$ onto the arc $\br([a,b])\subseteq S_X$ endowed with its intrinsic metric $\breve d$.
\end{lemma}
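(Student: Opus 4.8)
The plan is to reduce the statement to the two inequalities $\breve d(\br(s),\br(t))\le|t-s|$ and $\breve d(\br(s),\br(t))\ge|t-s|$ for all $s,t\in[a,b]$, since together they say that $\br{\restriction}[a,b]$ preserves distances onto $A:=\br([a,b])$, and surjectivity onto $A$ is built into the notation. First I would record injectivity: because $\br$ has unit speed ($\|\br'\|\equiv1$) and parameterizes the compact topological circle $S_X$, it is periodic with period $L$ equal to the (finite) length of $S_X$ and is injective on every interval of length $<L$; the hypothesis $\br([a,b])\ne S_X$ forces $b-a<L$, so $\br{\restriction}[a,b]$ is an injective continuous map on a compact interval, hence a homeomorphism onto $A$, with uniformly continuous inverse $\sigma:=(\br{\restriction}[a,b])^{-1}:A\to[a,b]$.

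For the upper bound, fix $s<t$ and $\e>0$. Using uniform continuity of $\br$ I would pick a partition $s=u_0<\dots<u_n=t$ fine enough that $\|\br(u_i)-\br(u_{i-1})\|<\e$, making $\br(u_0),\dots,\br(u_n)$ an $\e$-chain in $A$. Since $\br(u_i)-\br(u_{i-1})=\int_{u_{i-1}}^{u_i}\br'(u)\,du$ has norm at most $\int_{u_{i-1}}^{u_i}\|\br'\|=u_i-u_{i-1}$, this chain has length at most $t-s$; hence $\breve d_\e(\br(s),\br(t))\le t-s$ for every $\e>0$, and so $\breve d(\br(s),\br(t))\le t-s$.

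The lower bound will be the crux. The key local estimate is that unit speed makes short chords almost as long as their parameter gap: given $\eta\in(0,1)$, uniform continuity of $\br'$ on $[a,b]$ supplies $\delta>0$ with $\|\br'(u)-\br'(v)\|<\eta$ whenever $|u-v|<\delta$, and then for $|w-w'|<\delta$, choosing by Hahn--Banach a norming functional $\phi$ of $\br'(w')$ (so $\|\phi\|=1$, $\phi(\br'(w'))=1$) gives
$$\|\br(w)-\br(w')\|\ge\Big|\phi\Big(\int_{w'}^{w}\br'(u)\,du\Big)\Big|=\Big|\int_{w'}^{w}\phi(\br'(u))\,du\Big|\ge(1-\eta)\,|w-w'|,$$
because $\phi(\br'(u))\ge 1-\|\br'(u)-\br'(w')\|>1-\eta$ on the range of integration. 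Next I would shrink $\e$, using uniform continuity of $\sigma$, so that $\|x-x'\|<\e$ forces $|\sigma(x)-\sigma(x')|<\delta$ for $x,x'\in A$. Then for any $\e$-chain $\br(s)=x_0,\dots,x_m=\br(t)$ in $A$, writing $x_j=\br(w_j)$ with $w_j=\sigma(x_j)$, $w_0=s$, $w_m=t$, the gaps satisfy $|w_j-w_{j-1}|<\delta$, and the local estimate together with the telescoping triangle inequality for reals yields
$$\sum_{j=1}^m\|x_j-x_{j-1}\|\ge(1-\eta)\sum_{j=1}^m|w_j-w_{j-1}|\ge(1-\eta)\Big|\sum_{j=1}^m(w_j-w_{j-1})\Big|=(1-\eta)\,|t-s|.$$
Thus $\breve d_\e(\br(s),\br(t))\ge(1-\eta)|t-s|$ for all small $\e$, so $\breve d(\br(s),\br(t))\ge(1-\eta)|t-s|$, and letting $\eta\to0$ gives $\breve d(\br(s),\br(t))\ge|t-s|$.

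Combining the two bounds yields $\breve d(\br(s),\br(t))=|t-s|$, so $\br{\restriction}[a,b]$ is the claimed isometry. The main obstacle I expect is precisely the lower bound: one must rule out that a chain wandering back and forth inside $A$ could shortcut the parameter interval $[s,t]$, and this is exactly where unit speed (accessed through the norming functional) and the elementary inequality $\sum_j|w_j-w_{j-1}|\ge|t-s|$ cooperate. The only other delicate point is transferring smallness of ambient distances to smallness of parameter gaps, which is handled by the uniform continuity of $\sigma$ that injectivity guarantees.
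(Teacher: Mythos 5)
Your proposal is correct and follows essentially the same route as the paper: the upper bound comes from non-expansion (integrating the unit-norm derivative over a fine partition), and the lower bound combines uniform continuity of $\br'$ on $[a,b]$ with uniform continuity of the inverse map $(\br{\restriction}[a,b])^{-1}$ to show every sufficiently fine chain has length at least $(1-\eta)|t-s|$. The only cosmetic differences are that you argue the lower bound directly (letting $\eta\to0$) where the paper argues by contradiction, and you obtain the local chord estimate $\|\br(w)-\br(w')\|\ge(1-\eta)|w-w'|$ via a Hahn--Banach norming functional for $\br'(w')$ where the paper uses the reverse triangle inequality on $\int_{w'}^{w}\br'(u)\,du$; these are interchangeable implementations of the same estimate.
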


\begin{proof} The condition $\|\br'\|=1$ implies that $\br:\IR\to S_X$ is a local homeomorphism. Since $\br([a,b])\ne S_X$, the restriction $\br{\restriction}[a,b]$ is injective. To see that it is an isometry, take any points $x<y$ in the segment $[a,b]$. Given any $\e>0$ take any $\e$-chain $x=x_0<x_1<\dots<x_n=y$ in $[a,b]$. Since the map $\br:\IR\to S_X$ is non-expanding (as $\|\br'\|=1$), the sequence $\br(x)=\br(x_0),\dots,\br(x_n)=\br(y)$ is an $\e$-chain in $\br([a,b])$ with $$\sum_{i=1}^n\|\br(x_i)-\br(x_{i-1})\|\le\sum_{i=1}^n|x_i-x_{i-1}|=|x-y|.$$
Then $\breve d_\e(\br(x),\br(y))\le|x-y|$ and hence $\breve d(\br(x),\br(y))\le|x-y|$.
\smallskip

We claim that $\breve d(\br(x),\br(y))=|x-y|$. To derive a contradiction, assume that $\breve d(\br(x),\br(y))<|x-y|$. Then $x\ne y$ and $\breve d(\br(x),\br(y))>0$. Choose a number $\e>0$ such that 
$$
\frac{1+\e}{1-\e}\cdot\breve d(\br(x),\br(y))<|x-y|.
$$ 
By the uniform continuity of the restriction $\br'{\restriction}[a,b]$, there exists $\delta>0$ such that $\|\br'(t)-\br'(s)\|<\e$ for any $t,s\in[a,b]$ with $|t-s|<\delta$. 

By the compactness of $[a,b]$, the injective map $\br{\restriction}[a,b]$ is a topological embedding and by the compactness of $\br([a,b])$, the inverse map $(\br{\restriction}[a,b])^{-1}:\br([a,b])\to [a,b]$ is uniformly continuous. So, there exists $\epsilon>0$ such that $|s-t|<\delta$ for any $s,t\in[a,b]$ with $\|\br(s)-\br(t)\|<\epsilon$.

By the definition of $\breve d(\br(x),\br(y))>0$, there exists a $\epsilon$-chain $\br(x)=y_0,\dots,y_n=\br(y)$ in $\br([a,b])$ such that $\sum_{i=1}^n\|y_{i-1}-y_i\|<(1+\e)\cdot\breve d(\br(x),\br(y))$. For every $i\in\{0,\dots,n\}$ choose a (unique) point $x_i\in [a,b]$ such that $y_i=\br(x_i)$. The choice of $\epsilon$ ensures that $|x_{i-1}-x_i|<\delta$ and hence $\|\br'(t)-\br'(x_i)\|<\e$ for every $t\in [x_{i-1},x_i]$. Observe that
$$
\begin{aligned}
&\|\br(x_i)-\br(x_{i-1})\|=\Big\|\int_{x_{i-1}}^{x_i}\br'(t)dt\Big\|=\Big\|\int_{x_{i-1}}^{x_i}\br'(x_i)dt+\int_{x_{i-1}}^{x_i}(\br'(t)-\br'(x_i))dt\Big\|\ge\\
&\Big\|\int_{x_{i-1}}^{x_i}\br'(x_i)dt\Big\|-\Big\|\int_{x_{i-1}}^{x_i}(\br'(t)-\br'(x_i))dt\Big\|\ge\|\br'(x_i)\|\cdot|x_i-x_{i-1}|-\int_{x_{i-1}}^{x_i}\|\br'(t)-\br'(x_i)\|dt\ge \\
&1\cdot|x_i-x_{i-1}|-\e\cdot|x_i-x_{i-1}|=(1-\e)\cdot|x_i-x_{i-1}|
\end{aligned}
$$and hence
$$
(1+\e)\cdot\breve d(\br(x),\br(y))>\sum_{i=1}^n\|y_i-y_{i-1}\|=\sum_{i=1}^n\|\br(x_i)-\br(x_{i-1})\|>(1-\e)\sum_{i=1}^n|x_i-x_{i-1}|\ge(1-\e)|x-y|.
$$
Then
$$|x-y|< \frac{1+\e}{1-\e}\cdot\breve d(\br(x),\br(y))<|x-y|,$$which is a contradiction completing the proof of the equality $|x-y|=\breve d(\br(x),\br(y))$.
\end{proof}

Now we prove the main result of this section.

\begin{lemma}\label{l:iso} For any natural parameterizations $\br_1,\br_2:\IR\to S_X$ of the unit sphere of a $C^1$-smooth $2$-dimensional Banach space $X$, there exists an isometry $\Phi:\IR\to\IR$ of the real line such that $\br_1=\br_2\circ\Phi$.
\end{lemma}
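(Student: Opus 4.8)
The plan is to regard $\br_1$ and $\br_2$ as two unit-speed parameterizations of the same $C^1$-curve $S_X$ and to glue local reparameterizations into a single affine isometry of $\IR$. First I would record that, since $\|\br_i'\|\equiv 1$ and $\br_i$ is $C^1$, each $\br_i$ is an immersion, hence by the inverse function theorem a local $C^1$-diffeomorphism onto an open arc of $S_X$; in particular $\br_i$ is injective on every sufficiently short interval and its local inverse is again $C^1$. Lemma~\ref{l:M} shows that on such intervals $\br_i$ is moreover an isometry onto the corresponding arc with its intrinsic metric, confirming that each $\br_i$ is a local isometry from $\IR$ onto $S_X$ endowed with $\breve d$.

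Next I would fix a base point. Choose $s_0\in\IR$ and, using surjectivity of $\br_2$, a point $t_0\in\IR$ with $\br_2(t_0)=\br_1(s_0)=:p$. Near $p$ the set $S_X$ is a $C^1$-arc whose one-dimensional tangent line is spanned by $\br_1'(s_0)$, and $\br_2'(t_0)$ is a unit tangent vector to $S_X$ at $p$ (as seen by writing $\br_2=\br_1\circ g$ locally); hence $\br_2'(t_0)=\pm\br_1'(s_0)$. Let $\epsilon\in\{-1,+1\}$ be the sign with $\br_1'(s_0)=\epsilon\,\br_2'(t_0)$, put $c:=t_0-\epsilon s_0$, and define the candidate map $\Phi(s):=\epsilon s+c$, so that $\Phi(s_0)=t_0$ and $\br_1'(s_0)=\epsilon\,\br_2'(\Phi(s_0))$.

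The heart of the argument is to establish $\br_1=\br_2\circ\Phi$ globally by a connectedness (monodromy) argument. Consider
$$A=\{s\in\IR:\ \br_1(s)=\br_2(\Phi(s))\ \text{ and }\ \br_1'(s)=\epsilon\,\br_2'(\Phi(s))\}.$$
By the choice of $\epsilon,c$ we have $s_0\in A$, and $A$ is closed since $\br_1,\br_1',\br_2,\br_2',\Phi$ are continuous. To see that $A$ is open, take $s_1\in A$ and compare the unit-speed $C^1$-curves $\gamma_1(s)=\br_1(s)$ and $\gamma_2(s)=\br_2(\epsilon s+c)$, which satisfy $\gamma_1(s_1)=\gamma_2(s_1)$ and $\gamma_1'(s_1)=\br_1'(s_1)=\epsilon\,\br_2'(\Phi(s_1))=\gamma_2'(s_1)$. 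Writing $\gamma_2=\gamma_1\circ h$ on a neighborhood of $s_1$ (legitimate because $\gamma_1=\br_1$ is a local $C^1$-diffeomorphism there), the unit-speed condition forces $|h'|\equiv 1$, while $h(s_1)=s_1$, $h'(s_1)=+1$ and continuity of $h'$ give $h(s)=s$ near $s_1$; hence $\gamma_1=\gamma_2$ on a neighborhood of $s_1$, so that neighborhood lies in $A$. As $\IR$ is connected and $A$ is nonempty and clopen, $A=\IR$, whence $\br_1=\br_2\circ\Phi$ with $\Phi$ the affine isometry $s\mapsto\epsilon s+c$ of the real line.

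The step I expect to be the main obstacle is the openness of $A$, i.e. upgrading agreement of position and velocity at one point to agreement on a neighborhood; this is precisely the uniqueness of the arc-length parameterization of a $C^1$-arc, and it is what prevents the orientation sign $\epsilon$ from flipping as $s$ ranges over $\IR$. The local $C^1$-invertibility of $\br_1$ supplied by $\|\br_1'\|\equiv 1$ is what makes the substitution $\gamma_2=\gamma_1\circ h$ valid with $h$ continuously differentiable, so that the rigidity $|h'|\equiv 1$ can be exploited.
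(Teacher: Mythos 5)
Your proof is correct, but it takes a genuinely different route from the paper. The paper treats $\br_2:\IR\to S_X$ as a covering map and invokes the lifting theorem \cite[1.30]{Hat} to get a homeomorphism $\Phi$ with $\br_1=\br_2\circ\Phi$ directly; it then covers $\IR$ by intervals $[x_n,x_{n+1}]$ with $\br_1([x_n,x_{n+1}])\ne S_X$ and uses Lemma~\ref{l:M} to see that $\Phi$ restricted to each such interval is an isometry (being a composition of isometries onto an arc with its intrinsic metric), so that $\Phi$ is an isometry of $\IR$. You instead build the candidate affine isometry $\Phi(s)=\epsilon s+c$ from the position and velocity at a single base point, and prove $\br_1=\br_2\circ\Phi$ by a clopen (monodromy) argument whose open part is the local uniqueness of unit-speed $C^1$ reparameterization: writing $\gamma_2=\gamma_1\circ h$ via a local inverse of $\br_1$ (rigorously obtained by composing with a linear functional $\lambda$ satisfying $\lambda(\br_1'(s_1))=1$, so that the one-variable inverse function theorem applies) and exploiting $|h'|\equiv 1$, $h'(s_1)=1$. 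Note that your citation of Lemma~\ref{l:M} in the first paragraph is actually decorative: your argument never uses the intrinsic metric, only the local homeomorphism property and $\|\br_i'\|\equiv1$. What each approach buys: the paper reuses machinery it has already built (Lemma~\ref{l:M}) plus standard covering-space theory, keeping the local analysis to a minimum; your argument is more self-contained and elementary, avoids both the intrinsic metric and the lifting theorem, and produces the explicit affine form of $\Phi$ (including the fixed orientation sign $\epsilon$) from the outset rather than deducing it a posteriori from $\Phi$ being a piecewise isometric homeomorphism.
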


\begin{proof} For every $k\in\{1,2\}$, the differentiability of $\br_k$ and the equality $\|\br_k'\|=1$ imply that $\br_k:\IR\to S_X$ is a covering map. By \cite[1.30]{Hat}, there exist unique continuous maps $\Phi,\Psi:\IR\to\IR$ such that $\br_1=\br_2\circ\Phi$ and $\br_2=\br_1\circ\Psi$. The uniqueness of liftings \cite[1.30]{Hat} implies that $\Phi\circ\Psi=\Psi\circ\Phi$ is the identity map of $\IR$, which means that $\Phi$ is a homeomorphism of the real line. By the continuity of the map $\br_1$, there exists an increasing sequence of real numbers $(x_n)_{n\in\IZ}$ such that $\IR=\bigcup_{n\in\IZ}[x_n,x_{n+1}]$ and $\br_1([x_n,x_{n+1}])\ne S_X$ for every $n\in\IZ$.  For every $n\in\IZ$ consider the real number $y_n=\Phi(x_n)$ and observe that $(y_n)_{n\in\IZ}$ is a monotone sequence of real numbers in the real line such that $\IR=\bigcup_{n\in\IZ}[y_n,y_{n+1}]$ and $\br_1([x_n,x_{n+1}])=\br_2([y_n,y_{n+1}])$ for every $n\in\IZ$. By Lemma~\ref{l:M}, for every $n\in\IZ$ the maps $\br_1{\restriction}[x_n,x_{n+1}]$ and $\br_2{\restriction}[y_n,y_{n+1}]$ are isometries of the segments $[x_n,x_{n+1}]$ and $[y_n,y_{n+1}]$ onto the arc $\br_1([x_n,x_{n+1}]=\br_2([y_n,y_{n+1}])$ endowed with its intrinsic metric. Then the map $\Phi{\restriction}[x_n,x_{n+1}]=(\br_2{\restriction}[y_n,y_{n+1}])^{-1}\circ (\br_1{\restriction}[x_n,x_{n+1}])$ is an isometry of the interval $[x_n,x_{n+1}]$ onto the interval $[y_n,y_{n+1}]$. Having this information, it is easy to conclude that  the homeomorphism $\Phi$ is an isometry of the real line.
\end{proof}

\begin{remark} Any isometry $\Phi$ of the real line is of the form $\Phi(x)=ax+b$ for some $a\in\{-1,1\}$ and $b\in\IR$.
\end{remark}

\section{The polar parameterization of the unit sphere of a $2$-based Banach space}\label{s:p}


By a {\em $2$-based Banach space} we understand a 2-dimensional real Banach space endowed with a basis. 

Let $(X,\|\cdot\|)$ be a $2$-based Banach space and ${\mathbf e}_1,{\mathbf e}_2$ be the basis of $X$.  Let $\be_1^*,\be_2^*:X\to\IR$ be the biorthogonal functionals to the basis $\be_1,\be_2$, which means that $$\be_1^*(\be_1)=1=\be_2^*(\be_2)\mbox{ \ and \ } \be_1^*(\be_2)=0=\be_2^*(\be_1).$$ On the Banach space $X$ consider the equivalent (Euclidean) norm $|\cdot|$ defined  by $$|x|=\sqrt{|\be_1^*(x)|^2+|\be_2^*(x)|^2}.$$ To compare the norms $\|\cdot\|$ and $|\cdot|$, consider the constants
$$c=\min\{\|x\|:x\in X,\;|x|=1\}\mbox{ \ and \ }C=\max\{\|x\|:x\in X,\;|x|=1\}.$$
For a real number $t$ it will be convenient to denote the element $\cos(t){\mathbf e}_1+\sin(t){\mathbf e}_2$ of $X$ by $\mathbf e^{it}$. It is clear that $|\mathbf e^{it}|=1$.

\begin{definition} The map 
$$\bp:\IR\to S_X,\;\;\bp:t\mapsto\frac{\mathbf e^{it}}{\|\mathbf e^{it}\|},$$is called  {\em the polar parameterization} of the unit sphere $S_X=\{x\in X:\|x\|=1\}$ of the $2$-based Banach space $X$.
\end{definition}

The following lemma establishes the Lipschitz property of the polar parameterization.

\begin{lemma}\label{l:Lip}  
For every $t,\e\in\IR$ we have the lower and upper bounds: 
$$\frac{c}{C}\cdot|\sin(\e)|\le\|\bp(t+\e)-\bp(t)\|\le\frac{4C^2}{c^2}\cdot |\sin(\tfrac\e2)|\le \frac{2C^2}{c^2}|\e|.$$
\end{lemma}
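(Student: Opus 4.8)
The plan is to transfer everything to the auxiliary Euclidean norm $|\cdot|$, for which the vectors $\be^{it}$ genuinely lie on the unit circle, and then invoke the two-sided comparison $c|x|\le\|x\|\le C|x|$ valid for all $x\in X$ (this is exactly what the constants $c,C$ encode: apply the definitions to $x/|x|$). Write $u=\be^{it}$ and $v=\be^{i(t+\e)}$, so that $|u|=|v|=1$, $\bp(t)=u/\|u\|$ and $\bp(t+\e)=v/\|v\|$. Two elementary Euclidean facts will carry the argument: the chord-length identity $|v-u|=2|\sin(\tfrac\e2)|$, obtained by expanding $|v-u|^2=2-2\cos\e$; and the determinant (cross-product) $u\times v:=\be_1^*(u)\be_2^*(v)-\be_2^*(u)\be_1^*(v)=\sin\e$, which by Lagrange's identity satisfies $|x\times y|\le|x|\,|y|$ for all $x,y\in X$. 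Note also $c\le C$, which will be used to simplify constants.

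For the upper bound I would first pass to the Euclidean norm, $\|\bp(t+\e)-\bp(t)\|\le C\big|\tfrac{v}{\|v\|}-\tfrac{u}{\|u\|}\big|$, and then split
$$\frac{v}{\|v\|}-\frac{u}{\|u\|}=\frac{v-u}{\|v\|}+u\Big(\frac{1}{\|v\|}-\frac{1}{\|u\|}\Big).$$
The first summand has Euclidean norm at most $|v-u|/c$ (since $\|v\|\ge c|v|=c$); for the second, the reverse triangle inequality gives $\big|\,\|u\|-\|v\|\,\big|\le\|u-v\|\le C|u-v|$, so that $|u|\cdot\big|\tfrac1{\|v\|}-\tfrac1{\|u\|}\big|\le C|u-v|/c^2$. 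Combining these with $|v-u|=2|\sin(\tfrac\e2)|$ and using $c+C\le 2C$ yields the claimed bound $\frac{4C^2}{c^2}|\sin(\tfrac\e2)|$; the final inequality $\frac{4C^2}{c^2}|\sin(\tfrac\e2)|\le\frac{2C^2}{c^2}|\e|$ is merely $|\sin\theta|\le|\theta|$ with $\theta=\tfrac\e2$.

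The lower bound is where the real idea lies, and I expect it to be the main obstacle, since passing to the Euclidean norm alone loses the angular information that $|\sin\e|$ records. Because $\|\cdot\|\ge c|\cdot|$, it suffices to bound the Euclidean quantity $|\bp(t+\e)-\bp(t)|$ from below by $|\sin\e|/C$, and for this I would take the cross-product with $u$: using $u\times u=0$,
$$u\times\Big(\frac{v}{\|v\|}-\frac{u}{\|u\|}\Big)=\frac{u\times v}{\|v\|}=\frac{\sin\e}{\|v\|}.$$
Then, by $|x\times y|\le|x|\,|y|$ together with $|u|=1$ and $\|v\|\le C|v|=C$,
$$\Big|\frac{v}{\|v\|}-\frac{u}{\|u\|}\Big|\ge\Big|u\times\Big(\frac{v}{\|v\|}-\frac{u}{\|u\|}\Big)\Big|=\frac{|\sin\e|}{\|v\|}\ge\frac{|\sin\e|}{C},$$
and multiplying by $c$ gives $\|\bp(t+\e)-\bp(t)\|\ge\frac{c}{C}|\sin\e|$. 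The only points needing care are recording the Lagrange-identity estimate $|x\times y|\le|x|\,|y|$ and the value $u\times v=\sin\e$; once the cross-product trick is in place, everything else is routine bookkeeping.
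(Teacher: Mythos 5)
Your proof is correct, and its overall architecture matches the paper's: both directions hinge on the two-sided comparison $c|x|\le\|x\|\le C|x|$ between the given norm and the auxiliary Euclidean norm. Your upper bound (split $\frac{v}{\|v\|}-\frac{u}{\|u\|}$ into $\frac{v-u}{\|v\|}$ plus a term controlled by the reverse triangle inequality, then use $|v-u|=2|\sin(\e/2)|$ and $c+C\le 2C$) is the same add-and-subtract computation the paper performs directly in the norm $\|\cdot\|$, and it yields the same constant $\frac{4C^2}{c^2}$. The one genuine difference is the lower bound. The paper bounds $|\bp(t+\e)-\bp(t)|$ from below by $\frac1{\|\be^{it}\|}\min\{|r\be^{i(t+\e)}-\be^{it}|:r>0\}$, i.e. by the Euclidean distance from $\be^{it}$ to the ray through $\be^{i(t+\e)}$, and evaluates this by rotation invariance via $\min_{r\ge0}|re^{i\e}-1|$. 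You instead apply the skew-symmetric determinant functional $u\times\cdot$, which annihilates the $u/\|u\|$ term and extracts $\sin\e/\|v\|$ exactly; combined with the Lagrange bound $|x\times y|\le|x|\,|y|$ and $\|v\|\le C$ this gives the same $|\sin\e|/C$. The two realizations are equivalent in substance (your functional computes precisely the distance to the line through $u$), but yours is marginally cleaner: it needs no case analysis, whereas the paper's minimum equals $|\sin\e|$ only when $\cos\e\ge0$ and equals $1$ otherwise (still $\ge|\sin\e|$, so the paper's inequality stands). Both routes produce identical constants, so the choice is one of taste.
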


\begin{proof} Observe that
$$
\begin{aligned}
&\|\bp(t+\e)-\bp(t)\|=\Big\|\frac{\mathbf e^{i(t+\e)}}{\|\mathbf e^{i(t+\e)}\|}-\frac{\mathbf e^{it}}{\|\mathbf e^{it}\|}\Big\|=\Big\|\frac{\mathbf e^{i(t+\e)}\|\mathbf e^{it}\|-\mathbf e^{it}\|\mathbf e^{i(t+\e)}\|}{\|\mathbf e^{i(t+\e)}\|\cdot\|\mathbf e^{it}\|}\Big\|=\\
&\Big\|\frac{\mathbf e^{i(t+\e)}\|\mathbf e^{it}\|-\mathbf e^{it}\|\mathbf e^{it}\|+\mathbf e^{it}\|\mathbf e^{it}\|-\mathbf e^{it}\|\mathbf e^{i(t+\e)}\|}{\|\mathbf e^{i(t+\e)}\|\cdot\|\mathbf e^{it}\|}\Big\|\le\\
&\frac{\|\mathbf e^{i(t+\e)}-\mathbf e^{it}\|\cdot \|\mathbf e^{it}\|+\|\mathbf e^{it}\|\cdot \big|\|\mathbf e^{it}\|-\|\mathbf e^{i(t+\e)}\|\big|}{ c^2}\le2\frac{\|\mathbf e^{i(t+\e)}-\mathbf e^{it}\|\cdot \|\mathbf e^{it}\|}{c^2}\le\\
&\frac{2C^2|\mathbf e^{i(t+\e)}-\mathbf e^{it}|}{c^2}=\frac{2C^2|\mathbf e^{i\e}-1|}{c^2}=\frac{4C^2|\sin(\e/2)|}{c^2}\le\frac{2C^2|\e|}{c^2}
\end{aligned}
$$
and
$$
\begin{aligned}
&
\|\bp(t+\e)-\bp(t)\|\ge c\cdot|\bp(t+\e)-\bp(t)|=c\cdot\Big|\frac{\mathbf e^{i(t+\e)}}{\|\mathbf e^{i(t+\e)}\|}-\frac{\mathbf e^{it}}{\|\mathbf e^{it}\|}\Big|\ge\\
&\frac{c}{\|\mathbf e^{it}\|}\min\{|r\mathbf e^{i(t+\e)}-\mathbf e^{it}|:r>0\}\ge \frac{c}{C}\min\{|re^{i\e}-1|:r\ge0\}=\frac{c}{C}\cdot |\sin(\e)|.
\end{aligned}
$$
\end{proof}

Next we establish some differentiability properties of the polar parameterization.

\begin{lemma}\label{l:p} The polar parameterization $\bp:\IR\to S_X$, $\bp:\IR\mapsto\frac{\mathbf e^{it}}{\|\mathbf e^{it}\|}$, has the following properties:
\begin{enumerate}
\item $\bp(t+\pi)=-\bp(t)$ for every $t\in\IR$;
\item the function $\bp$ has one-sided derivatives $$\bp'_-(t)=\lim_{\e\to-0}\frac{\bp(t+\e)-\bp(t)}{\e}\mbox{ \ and \ }\bp'_+(t)=\lim_{\e\to+0}\frac{\bp(t+\e)-\bp(t)}{\e}$$ at each point $t\in\IR$;
\item the set $\Lambda_{\bp}=\{t\in \IR:\bp'_-(t)\ne\bp'_+(t)\}$ is at most countable;
\item the functions $\bp'_-$ and $\bp'_+$ have bounded variation on bounded subsets of $\IR$;
\item the function $\bp$ is twice differentiable almost everywhere and its second derivative $\bp''$ is measurable and locally integrable;
\item $\dfrac{c}{C}\le \min\{\|\bp'_-(t)\|,\|\bp'_+(t)\|\}\le 
 \max\{\|\bp'_-(t)\|,\|\bp'_+(t)\|\}\le \dfrac{2C^2}{c^2}$  for every $t\in\IR$.
\end{enumerate}
\end{lemma}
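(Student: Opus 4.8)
The plan is to handle the six assertions in increasing order of difficulty, isolating the convexity of the unit ball $B_X$ as the single source of the nontrivial regularity behind (2)--(4). Two of the assertions are immediate. For (1), since $\mathbf e^{i(t+\pi)}=\cos(t+\pi)\be_1+\sin(t+\pi)\be_2=-\mathbf e^{it}$ and the norm is even, $\|\mathbf e^{i(t+\pi)}\|=\|\mathbf e^{it}\|$, so $\bp(t+\pi)=-\mathbf e^{it}/\|\mathbf e^{it}\|=-\bp(t)$. For (6), I would divide the two-sided estimate of Lemma~\ref{l:Lip} by $|\e|$ and let $\e\to\pm0$: the lower bound $\frac cC|\sin\e|$ gives $\|\bp'_\pm(t)\|\ge\frac cC$ (since $|\sin\e|/|\e|\to1$), and the upper bound $\frac{2C^2}{c^2}|\e|$ gives $\|\bp'_\pm(t)\|\le\frac{2C^2}{c^2}$; this of course presupposes that the one-sided derivatives exist, which is (2).

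The heart of the matter is (2)--(4), and for these I would reduce everything to the scalar function $h(t)=\|\mathbf e^{it}\|$, writing $\bp(t)=h(t)^{-1}\mathbf e^{it}$, where $h$ is bounded below by $c>0$ and, by the triangle inequality together with $|\mathbf e^{i\e}-1|\le|\e|$, is Lipschitz. The crucial structural input is the convexity of $B_X$, which I would record as a generalized convexity of $h$: for $\theta_1<\theta_2<\theta_3$ with $\theta_3-\theta_1<\pi$ one has $\mathbf e^{i\theta_2}=\alpha\,\mathbf e^{i\theta_1}+\beta\,\mathbf e^{i\theta_3}$ with $\alpha=\frac{\sin(\theta_3-\theta_2)}{\sin(\theta_3-\theta_1)}>0$ and $\beta=\frac{\sin(\theta_2-\theta_1)}{\sin(\theta_3-\theta_1)}>0$, so the subadditivity and homogeneity of the norm give
$$h(\theta_2)\,\sin(\theta_3-\theta_1)\le h(\theta_1)\,\sin(\theta_3-\theta_2)+h(\theta_3)\,\sin(\theta_2-\theta_1).$$
This is exactly the statement that $h$ is convex with respect to the Chebyshev system $\{\cos,\sin\}$, equivalently that $h''+h\ge0$ in the sense of distributions. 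From the standard theory of such functions (the same difference-quotient monotonicity argument as for ordinary convex functions, now with sine-weighted chords) one obtains that $h$ has one-sided derivatives $h'_\pm$ at every point, that its distributional derivative is a signed measure $\mu-h\,dt$ finite on bounded sets with $\mu\ge0$, and hence that $h'$ is of bounded variation on bounded sets, with $h'_-(t)=h'_+(t)$ off the at most countable set of atoms of $\mu$.

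These properties then transfer verbatim to $\bp$. Differentiating $\bp=h^{-1}\mathbf e^{it}$ one-sidedly yields $\bp'_\pm(t)=h(t)^{-1}(\mathbf e^{it})'-h(t)^{-2}h'_\pm(t)\,\mathbf e^{it}$; since $\mathbf e^{it}$ and $(\mathbf e^{it})'$ are smooth and $h$ is bounded away from $0$, the one-sided derivatives $\bp'_\pm$ exist, giving (2); the set $\Lambda_\bp$ where they differ coincides with the countable set where $h'_-\ne h'_+$, giving (3); and $\bp'_\pm$, being built from $h$, $h'_\pm$ and smooth functions by products and quotients, is of bounded variation on bounded sets, giving (4). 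Assertion (5) is then a consequence of (4) through the Lebesgue theorems recalled before Lemma~\ref{l:Lebesgue}: a function of bounded variation is differentiable almost everywhere with locally integrable derivative, so $\bp'_\pm$, and hence $\bp'$ off the countable set $\Lambda_\bp$, is differentiable a.e.; thus $\bp$ is twice differentiable almost everywhere with measurable, locally integrable $\bp''$.

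I expect the only real obstacle to be the rigorous passage from the generalized-convexity inequality to the one-sided differentiability and bounded variation of $h$. The cleanest route is to note that the displayed inequality makes the sine-weighted difference quotients of $h$ monotone, mirroring the elementary theory of convex functions; alternatively one may represent $S_X$ locally as the graph of an ordinary convex (or concave) function over a supporting line and check that the smooth, strictly monotone reparameterization from $t$ to the graph coordinate preserves one-sided differentiability. Either way, the transfer to $\bp$ and the deduction of (5) are routine applications of the Lebesgue theory already available to us.
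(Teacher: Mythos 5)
Your proposal is correct, but it takes a genuinely different route from the paper's proof of parts (2)--(4), which are the heart of the lemma. The paper argues locally: near any point $\bp(\varphi)$ it writes the sphere as the normalization $\mathbf g(t)=\frac{\bp(\varphi)+tv}{f(t)}$ of a line parameterization, where $f(t)=\|\bp(\varphi)+tv\|$ is an ordinary convex function; it applies the textbook theory of convex functions from \cite{RA} (one-sided derivatives everywhere, countably many corners, non-decreasing hence locally BV one-sided derivatives) to $f$, and then transports all properties to $\bp$ through a diffeomorphism $\delta$ with $\bp=\mathbf g\circ\delta$ locally. You instead work globally with the scalar radial function $h(t)=\|\mathbf e^{it}\|$, derive from the triangle inequality and the identity $\sin(\theta_3-\theta_1)\,\mathbf e^{i\theta_2}=\sin(\theta_3-\theta_2)\,\mathbf e^{i\theta_1}+\sin(\theta_2-\theta_1)\,\mathbf e^{i\theta_3}$ the trigonometric convexity $h''+h\ge 0$ in the distributional sense, and transfer regularity to $\bp=h^{-1}\mathbf e^{it}$ by the one-sided quotient rule; this makes the identification of $\Lambda_\bp$ with the corner set of $h$ and the BV assertion (4) completely explicit and avoids the paper's lifting/diffeomorphism bookkeeping. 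What your route buys is a clean, global, formula-based transfer; what it costs is that the regularity theory of trigonometrically convex functions is not available off the shelf in \cite{RA}, so you must prove it --- and the cleanest proof, via the substitution $x=\tan t$, under which $h(t)=g(\tan t)\cos t$ with $g(x)=\|\be_1+x\be_2\|$ an ordinary convex function, is essentially the paper's local reduction in disguise, so the two arguments converge at exactly the point you flagged as the ``only real obstacle.'' Parts (1), (5) and (6) are handled the same way in both proofs (direct computation; BV implies a.e.\ differentiability with measurable locally integrable derivative; division of the two-sided estimate of Lemma~\ref{l:Lip} by $|\e|$). One caveat, which you share with the paper rather than introduce yourself: since $\Lambda_\bp$ may be dense, ``twice differentiable almost everywhere'' in (5) must be read as a.e.\ differentiability of the everywhere-defined function $\bp'_+$ (equivalently $\bp'_-$), with $\bp''$ taken to be its derivative; this is the convention the paper itself adopts, so it is not a gap in your argument.
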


\begin{proof} 1. Observe that for every $t\in\IR$
$$\bp(t+\pi)=\frac{\mathbf e^{i(t+\pi)}}{\|\mathbf e^{i(t+\pi)}\|}=\frac{-\mathbf e^{it}}{\|-\mathbf e^{it}\|}=-\frac{\mathbf e^{it}}{\|\mathbf e^{it}\|}=-\bp(t).$$

2--4. It suffices to prove the properties (2)--(4) of the function $\bp$ locally, i.e., in a neighborhood of any point $\varphi\in\IR$.  Given a real number $\varphi$, consider the point $\bp(\varphi)\in S_X$ and choose a vector $v\in X$ that is linearly independent with the vector $\bp(\varphi)$.  Observe that the function $$f:\IR\to \IR,\;\;f:t\mapsto \|\bp(\varphi)+tv\|$$is convex and strictly positive. By Theorem 3.7.4 in \cite{RA}, the function $f$ has one sided derivatives $f'_-(x)$ and $f'_+(x)$ at each point $x\in\IR$. Moreover, the functions $f'_-$ and $f'_+$ are non-decreasing and the set $\Lambda_f=\{x\in \IR:f'_-(x)\ne f'_+(x)\}$ is at most countable. By Theorem~6.1.3 \cite{RA}, the non-decreasing functions $f'_-$ and $f'_+$ have bounded variation on each compact subset of $\IR$. Then the function
$$\mathbf g:\IR\to S_X,\;\mathbf g:t\mapsto \frac{\bp(\varphi)+tv}{\|\bp(\varphi)+tv\|}=\frac{\bp(\varphi)+tv}{f(t)}$$
has one-sided derivatives $$
\mathbf g'_-(t)=\frac{v\cdot f(t)-(\bp(\varphi)+tv)\cdot f'_-(t)}{f(t)^2}\mbox{ and }\mathbf g'_+(t)=\frac{v\cdot f(t)-(\bp(\varphi)+tv)\cdot f'_+(t)}{f(t)^2}$$at each point $t\in\IR$. Moreover, the functions $\mathbf g'_-$ and $\mathbf g'_+$ have bounded variation on bounded subsets of $\IR$, see Theorems~6.1.9, 6.1.10, 6.1.11 in \cite{RA}.  
\smallskip

Consider the unit sphere $\IT=\{\mathbf e^{it}:t\in\IR\}$ of the Banach space $(X,|\cdot|)$ and the diffeomorphism $h:(\varphi-\pi,\varphi+\pi)\to \IT\setminus\big\{-\frac{\bp(\varphi)}{|\bp(\varphi)|}\big\},\;h(t)\mapsto \mathbf e^{it}$. Next, consider the diffeomorphic embedding $e:\IR\to\IT$, $e:t\mapsto \frac{\bp(\varphi)+vt}{|\bp(\varphi)+vt|}$ and observe that $h^{-1}\circ e:\IR\to (\varphi-\pi,\varphi+\pi)$ is a diffeomorphism of $\IR$ onto some open interval $(a,b)\subseteq(\varphi-\pi,\varphi+\pi)$ that contains $\varphi$. Then $\delta=e^{-1}\circ h:(a,b)\to \IR$ is a diffeomorphism of $(a,b)$ onto the real line.  Observe that for any $t\in(a,b)$ we have $\bp(t)=\mathbf g\circ \delta(t)$. Now the monononicity of the diffeomorphism $\delta$ and the properties of the function $\mathbf g$ imply that the function $\bp$ has one-sided derivatives $\bp'_-(t)=\mathbf g'_-(\delta(t))\cdot \delta'(t)$ and $\bp'_+(t)=\mathbf g_+'(\delta(t))\cdot \delta'(t)$ at each point $t\in(a,b)$, the set $\{t\in(a,b):\bp'_-(t)\ne\bp'_+(t)\}$ is at most countable and the functions $\bp'_-$ and $\bp'_+$ have bounded variation on each compact subset of the interval $(a,b)$, see Theorem~6.1.11 \cite{RA}. 
\smallskip

5. By Theorem 1.3.1 \cite{RA}, each monotone real function  is differentiable almost everywhere and its derivative is measurable and locally integrable. By Theorem 6.1.15 \cite{RA}, each real function that has bounded variation on bounded sets is the difference of two increasing functions. Consequently, any real-valued function that has  bounded variation on  bounded intervals is differentiable almost everywhere and its derivative is measurable and locally integrable. Applying this conclusion to the coordinate functions of the function $\bp'_-$ and $\bp'_+$ (which have bounded variation on bounded sets by already proved Lemma~\ref{l:p}(4)), we conclude that the functions $\bp'_-$ and $\bp'_+$ are differentiable almost everywhere and their derivatives are measurable and locally integrable. 
Since the functions $\bp'_-$ and $\bp'_+$ coincide almost everywhere, the function $\bp$ is twice differentiable almost everywhere and its second derivative $\bp''$ is measurable and locally integrable.   
\vskip3pt

6. The upper and lower bounds for the norms of the one-sided derivatives $\bp'_-(t)$ and $\bp'_+(t)$ can be easily derived from Lemma~\ref{l:Lip}.
\smallskip
\end{proof}





For two vectors $\mathbf x,\mathbf y\in X$ we write $\mathbf x{\upuparrows}\mathbf y$ if $\mathbf x=\alpha{\cdot}\mathbf y$ for some positive real number $\alpha$.

\begin{lemma}\label{l:ab1} Let $t,u,\alpha,\beta\in\IR$ be real numbers such that $\bp'(t){\upuparrows}\bp(u)$ and $$\frac{\bp'(u)}{|\bp'(u)|}=\alpha\cdot\frac{\bp(u)}{|\bp(u)|}+\beta\cdot\frac{\bp(t)}{|\bp(t)|}.$$ Then $|\alpha|\le \frac{C+c}c{\cdot}|\beta|$.
\end{lemma}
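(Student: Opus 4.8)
The statement asks us to bound $|\alpha|$ in terms of $|\beta|$, where $\alpha,\beta$ arise from expressing the unit tangent vector $\frac{\bp'(u)}{|\bp'(u)|}$ in the basis $\{\frac{\bp(u)}{|\bp(u)|},\frac{\bp(t)}{|\bp(t)|}\}$ of $X$, under the hypothesis that $\bp'(t)$ is a positive multiple of $\bp(u)$. The plan is to exploit this hypothesis geometrically: since $\bp'(t){\upuparrows}\bp(u)$, the vector $\bp(u)$ points in the tangent direction of the sphere at $\bp(t)$, so I would first extract the two facts that the angle-type relation encodes and then convert them into norm inequalities using the equivalence constants $c,C$ between $\|\cdot\|$ and $|\cdot|$.

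**Main steps.**
First I would apply the biorthogonal/coefficient-extraction trick: to isolate $\alpha$, I would look for a linear functional that kills $\frac{\bp(t)}{|\bp(t)|}$ and evaluate it on both sides of the given identity, and similarly to handle the $\beta$ term. The key input is the hypothesis $\bp'(t){\upuparrows}\bp(u)$: this says $\bp(u)$ is (up to scaling) tangent to the sphere at $\bp(t)$, which in the Euclidean structure $|\cdot|$ means $\bp(u)$ is $|\cdot|$-orthogonal (or at least controlled relative) to $\bp(t)$. I would use this to control the $|\cdot|$-geometry of the pair $\bp(t),\bp(u)$. Second, I would estimate the relevant quantities by passing between $\|\cdot\|$ and $|\cdot|$ via the bounds $c\,|x|\le\|x\|\le C\,|x|$, together with Lemma~\ref{l:Lip} and the norm bounds on $\bp'$ from Lemma~\ref{l:p}(6), to turn the geometric orthogonality statement into the explicit constant $\frac{C+c}{c}$.

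**Obstacle.**
The hard part will be extracting the precise constant $\frac{C+c}{c}$ rather than a cruder bound. The additive form $C+c$ (rather than a product like $C/c$ or $C^2/c^2$) suggests the argument does not merely compare norms term-by-term but rather exploits a cancellation: expanding $\bp'(u)$ and using that $\bp(t)$, being a unit vector in $\|\cdot\|$, differs from its $|\cdot|$-normalization $\frac{\bp(t)}{|\bp(t)|}$ by a factor pinned between $c$ and $C$. I expect the clean constant to emerge from writing $\alpha\frac{\bp(u)}{|\bp(u)|}=\frac{\bp'(u)}{|\bp'(u)|}-\beta\frac{\bp(t)}{|\bp(t)|}$, taking $\|\cdot\|$-norms, and bounding $\|\frac{\bp'(u)}{|\bp'(u)|}\|\le C$ while using the hypothesis $\bp'(t){\upuparrows}\bp(u)$ to show $\|\frac{\bp(u)}{|\bp(u)|}\|\ge c$; the triangle inequality then yields $c|\alpha|\le C+|\beta|\,\|\frac{\bp(t)}{|\bp(t)|}\|\le C+C|\beta|$, and the hypothesis must force $|\beta|\ge$ something so that $C\le C|\beta|\cdot(\text{factor})$ folds into the stated bound. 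Pinning down exactly which orthogonality relation gives the $+c$ term, rather than $+C$, is the step I would treat most carefully.
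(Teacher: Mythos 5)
Your proposal assembles the right generic ingredients (extracting coefficients with a functional that annihilates one basis vector, passing between $\|\cdot\|$ and $|\cdot|$ via $c$ and $C$), but it misses the one geometric fact that actually drives the paper's proof, and the computation you sketch in your ``Obstacle'' paragraph cannot produce the stated constant. The paper's key step is a \emph{lower bound on} $|\beta|$, obtained as follows: let $\bb_1=\bp(u)/|\bp(u)|$, let $\bb_2$ be its Euclidean rotation by $\pi/2$, and let $\bb_2^*$ be the biorthogonal functional, so that $\bb_2^*$ kills the $\alpha$-term (not the $\beta$-term). By convexity of $B_X$ and the sandwich $\tfrac1C\ID\subseteq B_X\subseteq\tfrac1c\ID$, the tangent line $\bp(u)+\IR\cdot\bp'(u)$ cannot meet the interior of $\tfrac1C\ID$; since $|\bp(u)|\le 1/c$, the sine of the Euclidean angle between $\bp'(u)$ and $\bp(u)$ is at least $\tfrac{1/C}{1/c}=\tfrac cC$, i.e. $|\bb_2^*(\bp'(u))|/|\bp'(u)|\ge c/C$. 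Applying $\bb_2^*$ to the identity then gives $\tfrac cC\le\big|\beta\cdot\bb_2^*\big(\tfrac{\bp(t)}{|\bp(t)|}\big)\big|\le|\beta|$, hence $1\le\tfrac Cc|\beta|$. Your proposal instead hopes to get such information from the hypothesis $\bp'(t)\upuparrows\bp(u)$, read as ``$\bp(u)$ is $|\cdot|$-orthogonal to $\bp(t)$''; that reading is false in a general Banach space (the tangent direction to $S_X$ at $\bp(t)$ need not be Euclidean-orthogonal to $\bp(t)$), and in fact the paper's argument never invokes the hypothesis at all --- it is carried in the statement only because it holds in the later applications (Lemmas~\ref{l:ab2} and \ref{l:ab3}).

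The second gap is quantitative. You take $\|\cdot\|$-norms in $\alpha\tfrac{\bp(u)}{|\bp(u)|}=\tfrac{\bp'(u)}{|\bp'(u)|}-\beta\tfrac{\bp(t)}{|\bp(t)|}$, which gives $c|\alpha|\le C+C|\beta|$, i.e. $|\alpha|\le\tfrac Cc(1+|\beta|)$. (Note that $\big\|\tfrac{\bp(u)}{|\bp(u)|}\big\|\ge c$ requires no hypothesis; it holds for every Euclidean unit vector.) Even after inserting the correct lower bound $|\beta|\ge c/C$, this route only yields $|\alpha|\le\tfrac{C(C+c)}{c^2}|\beta|$, strictly weaker than the claimed $\tfrac{C+c}{c}|\beta|$ whenever $c<C$. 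The paper avoids this loss by running the triangle inequality in the \emph{Euclidean} norm, where all three vectors are unit vectors: $|\alpha|\le 1+|\beta|$, and then $|\alpha|\le\tfrac Cc|\beta|+|\beta|=\tfrac{C+c}c|\beta|$. So the two repairs needed are: (i) derive $1\le\tfrac Cc|\beta|$ from the tangent-line/convexity picture at the point $\bp(u)$, and (ii) perform the final triangle inequality in $|\cdot|$, not in $\|\cdot\|$.
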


\begin{proof} In the Banach space $X$ consider the basis 
$$\bb_1=\mathbf e^{iu}=\cos(u){\mathbf e}_1+\sin(u){\mathbf e}_2=\tfrac{\bp(u)}{|\bp(u)|}\mbox{ \ and \ }\bb_2=\mathbf e^{i(u+\frac\pi2)}=-\sin(u){\mathbf e}_1+\cos(u){\mathbf e}_2.$$ 
Let $\bb_1^*,\bb_2^*:X\to\IR$ be the biorthogonal functionals to the basis $\bb_1,\bb_2$, which means that
$$\bb_1^*(\bb_1)=1=\bb_2^*(\bb_2)\mbox{ \ and \ }\bb_1^*(\bb_2)=0=\bb_2^*(\bb_1).$$

Let $\ID=\{x\in X:|x|\le 1\}$ be the unit ball in the norm $|\cdot|$. The definitions of the numbers $c=\min_{t\in\IR}\|\mathbf e^{it}\|$ and $C=\max_{t\in\IR}\|\mathbf e^{it}\|$ imply that 
$$\tfrac1{C}\ID\subseteq B_X\subseteq \tfrac1c\ID.$$


By the convexity of the unit ball $B_X$, the tangent line $\bp(u)+\bp'(u)\cdot\IR$ to $B_X$ at the point $\bp(u)\in B_X\subseteq\tfrac1c\ID$ does not intersect the interior of the disk $\tfrac1C\ID\subseteq B_X$. Evaluating the sinus of the angle $\alpha$ between the vectors $\mathbf p'(u)$ and $\mathbf p(u)$ (see Figure~1), we obtain the lower bound:
$$\frac{|\bb_2^*(\bp'(u))|}{|\bp'(u)|}=\sin(\alpha)\ge\frac cC.$$ 

Then
$$
\frac{c}C\le\frac{|\bb_2^*(\bp'(u))|}{|\bp'(u)|}=\big|\bb_2^*(\tfrac{\bp'(u)}{|\bp'(u)|})\big|=\big|\alpha\cdot\bb_2^*\big(\tfrac{\bp(u)}{|\bp(u)|}\big)+\beta\cdot\bb_2^*\big(\tfrac{\bp(t)}{|\bp(t)|}\big)\big|=\big|\beta\cdot\bb_2^*\big(\tfrac{\bp(t)}{|\bp(t)|}\big)\big|\le|\beta|,
$$
which implies $1\le\frac{C}c|\beta|$.
Finally, observe that the equality 
$\tfrac{\bp'(u)}{|\bp'(u)|}=\alpha{\cdot}\tfrac{\bp(u)}{|\bp(u)|}+\beta{\cdot}\tfrac{\bp(t)}{|\bp(t)|}$ implies the desired upper bound $$|\alpha|\le 1+|\beta|\le \frac{C}c{\cdot}|\beta|+|\beta|=\frac{C+c}{c}\cdot|\beta|.$$

\begin{center}
\begin{tikzpicture}
\draw (0,0) circle (1cm);
\draw (0,0) circle (3cm);
\draw (-5,0) -- (5,0);
\draw (0,-4) -- (0,4);
\draw[thick,->] (0,0) -- (2,0) node [anchor = north east] {$\mathbf p(u)$};
\draw (2,0) -- (-3.7,3.8);
\draw(2,0) -- (4.7,-1.8);
\draw[thick,->] (2,0) -- (0.5,1) node [anchor = south west] {$\mathbf p'(u)$};
\node at (-1,-1) {$\frac1C\mathbb D$};
\node at (-2.5,-2.5) {$\frac1c\mathbb D$};
\draw (1.5,0) .. controls (1.5,0.16) .. (1.58,0.27);
\node at (1.3,0.2) {$\alpha$};
\node at (0.1,-4.5) {Figure 1};
\end{tikzpicture}
\end{center}
\end{proof}

\begin{lemma}\label{l:ab2} Let $t,u,\alpha,\beta\in\IR$ be real numbers such that $\bp'(t){\upuparrows}\bp(u)$ and $\bp'(u)=\alpha{\cdot}\bp(u)+\beta{\cdot}\bp(t)$. Then $|\alpha|\le \frac{(C+c)C}{c^2}|\beta|$.
\end{lemma}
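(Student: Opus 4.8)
The plan is to deduce Lemma~\ref{l:ab2} directly from the already-proved Lemma~\ref{l:ab1} by passing between the unnormalized vectors $\bp'(u),\bp(u),\bp(t)$ and their Euclidean normalizations. First I would rewrite the hypothesis $\bp'(u)=\alpha{\cdot}\bp(u)+\beta{\cdot}\bp(t)$ after dividing through by $|\bp'(u)|$, obtaining
$$\frac{\bp'(u)}{|\bp'(u)|}=\frac{\alpha\,|\bp(u)|}{|\bp'(u)|}\cdot\frac{\bp(u)}{|\bp(u)|}+\frac{\beta\,|\bp(t)|}{|\bp'(u)|}\cdot\frac{\bp(t)}{|\bp(t)|}.$$
Thus the coefficients $\alpha'=\frac{\alpha\,|\bp(u)|}{|\bp'(u)|}$ and $\beta'=\frac{\beta\,|\bp(t)|}{|\bp'(u)|}$ are exactly the ones appearing in the hypothesis of Lemma~\ref{l:ab1}, and the geometric hypothesis $\bp'(t){\upuparrows}\bp(u)$ is common to both lemmas. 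So Lemma~\ref{l:ab2} is essentially a restatement of Lemma~\ref{l:ab1} for the unnormalized vectors.

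Applying Lemma~\ref{l:ab1} to $\alpha',\beta'$ gives $|\alpha'|\le\frac{C+c}{c}|\beta'|$, i.e.
$$\frac{|\alpha|\,|\bp(u)|}{|\bp'(u)|}\le\frac{C+c}{c}\cdot\frac{|\beta|\,|\bp(t)|}{|\bp'(u)|}.$$
Cancelling the common positive factor $|\bp'(u)|$ turns this into $|\alpha|\,|\bp(u)|\le\frac{C+c}{c}|\beta|\,|\bp(t)|$, whence
$$|\alpha|\le\frac{C+c}{c}\cdot\frac{|\bp(t)|}{|\bp(u)|}\cdot|\beta|.$$
It then remains only to bound the ratio $\frac{|\bp(t)|}{|\bp(u)|}$. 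Since $|\mathbf e^{is}|=1$, I have $|\bp(s)|=\frac{|\mathbf e^{is}|}{\|\mathbf e^{is}\|}=\frac{1}{\|\mathbf e^{is}\|}$ for every $s$, and the definitions of $c$ and $C$ give $c\le\|\mathbf e^{is}\|\le C$, so $\frac1C\le|\bp(s)|\le\frac1c$. Therefore $\frac{|\bp(t)|}{|\bp(u)|}\le\frac{1/c}{1/C}=\frac{C}{c}$, and substituting this yields the claimed inequality $|\alpha|\le\frac{(C+c)C}{c^2}|\beta|$.

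I do not expect any genuine obstacle: the only new ingredient beyond Lemma~\ref{l:ab1} is the elementary two-sided bound on $|\bp(s)|$ supplied by the comparison constants $c,C$ between the norms $\|\cdot\|$ and $|\cdot|$. The one technical point to keep in view is that $|\bp'(u)|>0$, so that the division and the cancellation are legitimate; this holds because $\bp'(u)\neq0$ (by Lemma~\ref{l:p}(6) its norm is bounded away from zero), and, as in Lemma~\ref{l:ab1}, the symbol $\bp'$ is understood at a point where the derivative exists.
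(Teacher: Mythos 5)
Your proposal is correct and follows essentially the same route as the paper's own proof: divide the relation $\bp'(u)=\alpha\,\bp(u)+\beta\,\bp(t)$ by $|\bp'(u)|$ to match the normalized form required by Lemma~\ref{l:ab1}, apply that lemma, cancel $|\bp'(u)|$, and bound $\frac{|\bp(t)|}{|\bp(u)|}\le\frac{C}{c}$ via $|\bp(s)|=\frac{1}{\|\mathbf e^{is}\|}\in[\frac1C,\frac1c]$. Your write-up is in fact slightly more careful than the paper's (explicit absolute values on the coefficients and the remark that $|\bp'(u)|>0$), but the argument is the same.
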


\begin{proof} The equality $\bp'(u)=\alpha{\cdot}\bp(u)+\beta{\cdot}\bp(t)$ implies the equality $\frac{\bp'(u)}{|\bp'(u)|}=\alpha\frac{|\bp(u)|}{|\bp'(u)|}\frac{\bp(u)}{|\bp(u)|}+\beta\frac{|\bp(t)|}{|\bp'(u)|}\frac{\bp(t)}{|\bp(t)|}$.   By Lemma~\ref{l:ab1},
$\alpha\tfrac{|\bp(u)|}{|\bp'(u)|}\le\tfrac{C+c}{c}\cdot|\beta|\cdot\tfrac{|\bp(t)|}{|\bp'(u)|}$ and hence $|\alpha|\le\tfrac{C+c}{c}\tfrac{|\bp(t)|}{|\bp(u)|}|\beta|\le \tfrac{C+c}{c}\cdot \tfrac{C}c\cdot|\beta|$.
\end{proof}

\section{The natural parameterization of the unit sphere of a $2$-based Banach space}\label{s:n}

Let $(X,\|\cdot\|)$ be a $2$-based Banach space, ${\mathbf e}_1,{\mathbf e}_2$ be its basis and $$\bp:\IR\to S_X,\;\;\bp:t\mapsto\frac{\mathbf e^{it}}{\|\mathbf e^{it}\|}=\frac{\cos(t){\mathbf e}_1+\sin(t){\mathbf e}_2}{\|\cos(t){\mathbf e}_1+\sin(t){\mathbf e}_2\|}$$ be the polar parameterization of the unit sphere $S_X=\{x\in X:\|x\|=1\}$ of $X$. By Lemmas~\ref{l:Lip} and \ref{l:p}, the function $\bp$ is Lipschitz, has one-sided derivatives $\bp'_-$ and $\bp'_+$ and the set $\Lambda_{\bp}=\{x\in\IR:\bp'_-(t)\ne\bp'_+(t)\}$ is at most countable. Since the derivatives $\bp'_-,\bp'_+$ have bounded norm, they are integrable, so we can consider the continuous increasing function 
$$\bs:\IR\to\IR,\;\;\bs:t\mapsto \int_0^t\|\bp'_{-}(t)\|dt=\int_0^t\|\bp'_+(t)\|dt.$$
For $t\in[0,\pi]$ the value $\bs(t)$ can be thought as the length of the curve on the sphere $S_X$ between the points $\bp(0)$ and $\bp(t)$ in the Banach space $X$ (see Section~\ref{s:m} for a more precise formulation of this assertion). 

The number
$$L=\bs(\pi)=\int_0^\pi\|\bp'_-(t)\|dt=\int_0^\pi\|\bp'_+(t)\|dt$$
is called the {\em half-length} of the sphere $S_X$ in $X$.

The definition of the function $\bs$ and Lemma~\ref{l:p} imply the following lemma describing the smoothness properties of the function $\bs$.

\begin{lemma}\label{l:s} The function $\bs:\IR\to\IR$ has the following properties:
\begin{enumerate}
\item $\bs$ is an increasing Lipschitz function;
\item $\bs$ has one-sided derivatives $\bs'_-(t)=\|\bp'_-(t)\|$ and $\bs'_+(t)=\|\bp'_+(t)\|$ at each point $t\in\IR$;
\item the set $\Lambda_{\bs}=\{t\in\IR:\bs'_-(t)\ne\bs'_+(t)\}$ is at most countable;
\item the functions $\bs'_-$ and $\bs'_+$ have bounded variation on bounded subsets of $\IR$;
\item the function $\bs$ is twice differentiable almost everywhere and its second derivative $\bs''$ is measurable and locally integrable;
\item $\frac{c}{C}\le \min\{\bs'_-(t),\bs'_+(t)\}\le 
\max\{\bs'_-(t),\bs'_+(t)\}\le \frac{2C^2}{c^2}$ for every $t\in\IR$.
\end{enumerate}
\end{lemma}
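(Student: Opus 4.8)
The plan is to transfer each listed property of $\bs$ from the corresponding property of the one-sided derivatives $\bp'_-,\bp'_+$ already recorded in Lemma~\ref{l:p}, so that nothing new has to be proved about the geometry of the sphere. Statements (1) and (6) are immediate consequences of the two-sided bound $\frac cC\le\|\bp'_\pm(t)\|\le\frac{2C^2}{c^2}$ from Lemma~\ref{l:p}(6): the integrand $\|\bp'_\pm\|$ defining $\bs(t)=\int_0^t\|\bp'_\pm(\tau)\|\,d\tau$ is bounded below by the positive constant $\frac cC$ and above by $\frac{2C^2}{c^2}$, so $\bs$ is strictly increasing and $\frac{2C^2}{c^2}$-Lipschitz, which is (1); and once (2) has been established the same bounds reproduce (6) verbatim.

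The heart of the matter is (2), and here I would first make explicit a fact that is present only implicitly in the proof of Lemma~\ref{l:p}: the right derivative $\bp'_+$ is right-continuous and the left derivative $\bp'_-$ is left-continuous. This is inherited from the local representation $\bp=\mathbf g\circ\delta$ used there, because the one-sided derivatives of the convex function $f(t)=\|\bp(\varphi)+tv\|$ enjoy exactly these one-sided continuity properties ($f'_+$ right-continuous and $f'_-$ left-continuous is a standard feature of convex functions, supplementing the monotonicity quoted from Theorem 3.7.4 in \cite{RA}), while $\mathbf g$ is assembled from $f$ and $f'_\pm$ and $\delta$ is a monotone $C^1$-diffeomorphism. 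Consequently $\|\bp'_+\|$ is right-continuous and $\|\bp'_-\|$ is left-continuous. Writing $\bs(t)=\int_0^t\|\bp'_+(\tau)\|\,d\tau=\int_0^t\|\bp'_-(\tau)\|\,d\tau$ (the two integrands agree off the at most countable set $\Lambda_{\bp}$, hence almost everywhere), the right-continuity of $\|\bp'_+\|$ at $t$ makes the average $\frac1\e\int_t^{t+\e}\|\bp'_+(\tau)\|\,d\tau$ converge to $\|\bp'_+(t)\|$ as $\e\to+0$, which is precisely $\bs'_+(t)=\|\bp'_+(t)\|$; the symmetric argument with $\|\bp'_-\|$ on the left gives $\bs'_-(t)=\|\bp'_-(t)\|$.

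The remaining statements then follow formally. For (3) I would note the inclusion $\Lambda_{\bs}\subseteq\Lambda_{\bp}$: wherever $\bp'_-(t)=\bp'_+(t)$ the norms coincide, so $\bs'_-(t)=\bs'_+(t)$, and since $\Lambda_{\bp}$ is at most countable by Lemma~\ref{l:p}(3), so is $\Lambda_{\bs}$. For (4), the functions $\bs'_-=\|\bp'_-\|$ and $\bs'_+=\|\bp'_+\|$ are the composition of the Lipschitz norm $\|\cdot\|$ with the vector functions $\bp'_-,\bp'_+$, which have bounded variation on bounded sets by Lemma~\ref{l:p}(4); as the composition of a Lipschitz function with a function of bounded variation is again of bounded variation, $\bs'_\pm$ have bounded variation on bounded subsets of $\IR$. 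Finally (5) repeats the argument of Lemma~\ref{l:p}(5): a function of bounded variation on bounded sets is a difference of two monotone functions, hence differentiable almost everywhere with measurable, locally integrable derivative by Theorems 1.3.1 and 6.1.15 in \cite{RA}; applying this to $\bs'_-$ and $\bs'_+$ and using that they coincide almost everywhere yields that $\bs$ is twice differentiable almost everywhere with measurable, locally integrable second derivative.

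The only genuinely delicate point is (2): the equalities $\bs'_\pm=\|\bp'_\pm\|$ are asserted \emph{pointwise everywhere}, not merely almost everywhere, and this exactness is what forces me to upgrade the bounded-variation information of Lemma~\ref{l:p} to the sharper one-sided continuity of $\bp'_\pm$. Indeed, without right-continuity of $\bp'_+$ one obtains only the inequality $\|\bp'_+(t)\|\le\bs'_+(t)$, via the triangle inequality $\|\int_t^{t+\e}\bp'_+\|\le\int_t^{t+\e}\|\bp'_+\|$ applied to the vector average whose limit is $\bp'_+(t)$; the reverse inequality is exactly what the convexity-driven one-sided continuity supplies, and it is the step I expect to require the most care to justify cleanly.
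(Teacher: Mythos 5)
Your proof is correct, and it actually does more than the paper, which offers no argument for Lemma~\ref{l:s} at all: the text simply asserts that the lemma follows from the definition of $\bs$ and Lemma~\ref{l:p}. Your route is the intended one --- every item is transferred from the corresponding item of Lemma~\ref{l:p} --- but you correctly isolate the one place where this transfer is not automatic, namely item (2). What Lemma~\ref{l:p} literally records (existence of the one-sided derivatives, their bounded variation, and the two-sided norm bounds) only yields $\bs'_+(t)=\lim_{\tau\to t+}\|\bp'_+(\tau)\|$, i.e.\ the desired equality up to right limits of the integrand; the pointwise-everywhere claim genuinely needs the supplementary fact you prove, that $\bp'_+$ is right-continuous and $\bp'_-$ is left-continuous. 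Your derivation of this from the standard one-sided continuity of the one-sided derivatives of the convex function $f(t)=\|\bp(\varphi)+tv\|$, propagated through the paper's local factorization $\bp=\mathbf g\circ\delta$, is sound, and then the averaging argument $\frac1\e\int_t^{t+\e}\|\bp'_+(\tau)\|\,d\tau\to\|\bp'_+(t)\|$ (and its mirror on the left) gives (2); items (1) and (3)--(6) follow formally, by the same bookkeeping the paper itself uses in the proof of Lemma~\ref{l:p}(5). Two minor remarks: first, if the diffeomorphism $\delta$ is decreasing rather than increasing, the chain rule swaps the roles of $\mathbf g'_-$ and $\mathbf g'_+$, but one-sided continuity transfers just the same, so your conclusion is unaffected; second, your closing observation is accurate --- the chord-length inequality only gives $\|\bp'_+(t)\|$ as a lower bound for the right Dini derivative of $\bs$, so the upper bound is exactly where the convexity-driven one-sided continuity is indispensable, and this is the gap the paper's one-line justification glosses over.
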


It follows that the increasing continuous function $\bs(t)$ has an inverse function $\bt(s)$, which is also increasing and continuous, and has the following properties that can be derived from the corresponding properties of the function $\bs$.

\begin{lemma}\label{l:t} The function $\bt:\IR\to\IR$ has the following properties:
\begin{enumerate}
\item $\bt$ is an increasing Lipschitz function;
\item $\bt$ has one-sided derivatives $\bt'_-(s)=\frac1{\bs'_-(\bt(s))}=\frac1{\|\bp'_-(\bt(s))\|}$ and  $\bt'_+(s)=\frac1{\bs'_+(\bt(s))}=\frac1{\|\bp'_+(\bt(s))\|}$ at each point $s\in\IR$;
\item the set $\Lambda_{\bt}=\{s\in\IR:\bt'_-(s)\ne\bt'_+(s)\}$ is at most countable and is equal to the set $\bs(\Lambda_\bs)$ where $\Lambda_\bs=\{t\in\IR:\bs'_-(t)\ne\bs'_+(t)\}$;
\item the functions $\bt'_-=\frac1{\bs'_-\!\circ \bt}$ and $\bt'_+=\frac1{\bs'_+\!\circ\bt}$ have bounded variation on bounded subsets of $\IR$;
\item the function $\bt$ is twice differentiable almost everywhere and its second derivative $\bt''$ is measurable and locally integrable;
\item  $\frac{c^2}{2C^2}\le \min\{\bt'_-(s),\bt'_+(s)\}\le 
\max\{\bt'_-(s),\bt'_+(s)\}\le \frac{C}{c}$ for every $s\in\IR$.
\end{enumerate}
\end{lemma}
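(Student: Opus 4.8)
The plan is to regard $\bt$ as the inverse of the strictly increasing homeomorphism $\bs$ and to push each property of $\bs$ recorded in Lemma~\ref{l:s} through this inversion. First I would note that by Lemma~\ref{l:s}(6) the one-sided derivatives satisfy $\bs'_-,\bs'_+\ge\frac cC>0$, so $\bs$ is strictly increasing; being continuous (Lemma~\ref{l:s}(1)) and, since $\bs'_\pm\ge\frac cC$, unbounded in both directions, it is a homeomorphism of $\IR$ onto $\IR$, and $\bt=\bs^{-1}$ is a well-defined increasing continuous function. For (1), integrating the bound $\bs'_+\ge\frac cC$ gives $\bs(t_2)-\bs(t_1)\ge\frac cC(t_2-t_1)$ for $t_1<t_2$; substituting $t_i=\bt(s_i)$ yields $\bt(s_2)-\bt(s_1)\le\frac Cc(s_2-s_1)$, so $\bt$ is Lipschitz with constant $\frac Cc$.

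The heart of the argument is (2), a one-sided inverse function statement. Fixing $s_0$ and $t_0=\bt(s_0)$, monotonicity and continuity of $\bt$ give $\bt(s)\downarrow t_0$ as $s\downarrow s_0$, whence
$$
\frac{\bt(s)-\bt(s_0)}{s-s_0}
=\Big(\frac{\bs(\bt(s))-\bs(\bt(s_0))}{\bt(s)-\bt(s_0)}\Big)^{-1}\to\frac{1}{\bs'_+(t_0)}
$$
as $s\downarrow s_0$, the denominator limit being legitimate because $\bs'_+(t_0)\ge\frac cC>0$; the left-hand limit is treated identically with $\bs'_-$. This gives $\bt'_\pm(s)=\frac1{\bs'_\pm(\bt(s))}=\frac1{\|\bp'_\pm(\bt(s))\|}$. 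Property (3) then follows formally: by (2), $\bt'_-(s)\ne\bt'_+(s)$ is equivalent to $\bs'_-(\bt(s))\ne\bs'_+(\bt(s))$, i.e.\ to $\bt(s)\in\Lambda_\bs$, i.e.\ to $s\in\bs(\Lambda_\bs)$; since $\bt$ is a bijection and $\Lambda_\bs$ is at most countable, $\Lambda_\bt=\bs(\Lambda_\bs)$ is at most countable.

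For (4) I would write $\bt'_\pm=\frac1{\bs'_\pm\circ\bt}$ and argue in two steps: composing the functions $\bs'_\pm$, which have bounded variation on bounded sets by Lemma~\ref{l:s}(4), with the increasing function $\bt$ preserves bounded variation on bounded sets, and since the values of $\bs'_\pm$ lie in the interval $[\frac cC,\frac{2C^2}{c^2}]$ on which $x\mapsto\frac1x$ is Lipschitz, taking reciprocals again preserves bounded variation on bounded sets (invoking the relevant composition theorems of \cite{RA} as in Lemma~\ref{l:p}(4)). Property (5) is then obtained exactly as in Lemma~\ref{l:s}(5): functions of bounded variation on bounded sets are differentiable almost everywhere with measurable, locally integrable derivative, and since $\bt'_-$ and $\bt'_+$ agree off the countable (hence null) set $\Lambda_\bt$, the function $\bt$ is twice differentiable almost everywhere with $\bt''$ measurable and locally integrable. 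Finally, (6) is immediate by substituting the bounds $\frac cC\le\bs'_\pm\le\frac{2C^2}{c^2}$ from Lemma~\ref{l:s}(6) into $\bt'_\pm=\frac1{\bs'_\pm\circ\bt}$, which gives $\frac{c^2}{2C^2}\le\bt'_\pm\le\frac Cc$. I expect only (2) to require genuine care; the remaining items are bookkeeping transfers from the corresponding assertions about $\bs$.
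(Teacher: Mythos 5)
Your proposal is correct and takes exactly the approach the paper intends: the paper states Lemma~\ref{l:t} without a written proof, saying only that the properties ``can be derived from the corresponding properties of the function $\bs$,'' and your argument is precisely that derivation --- the one-sided inverse-function computation for (2) (legitimate because $\bs'_\pm\ge\frac cC>0$), the identification $\Lambda_{\bt}=\bs(\Lambda_\bs)$ for (3), and the transfer of bounded variation and almost-everywhere second differentiability through the monotone change of variable $\bt$ followed by the reciprocal, which is Lipschitz on $[\frac cC,\frac{2C^2}{c^2}]$. Nothing needs to be corrected; you have simply written out the bookkeeping the paper leaves to the reader.
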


Now we can introduce one of central notions in this paper.

\begin{definition} The function $$\br:\IR\to S_X,\;\;\br:s\mapsto \bp(\bt(s)),$$is called  {\em the natural parameterization} of the sphere $S_X$ of the $2$-based Banach space.
\end{definition}

We recall that the number 
$$L:=\bs(\pi)=\int_0^\pi\|\bp'_-(t)\|dt=\int_0^\pi\|\bp'_+(t)\|dt$$stands for  the half-length of the sphere $S_X$.

\begin{lemma}\label{l:r} The natural parameterization $\br:\IR\to S_X$ of $S_X$ has the following properties:
\begin{enumerate}
\item $\br(s+L)=-\br(s)$ for every $s\in\IR$;
\item the function $\br$ has one-sided derivatives $$\br'_-(s)=\lim_{\e\to-0}\frac{\br(s+\e)-\br(s)}{\e}\mbox{ \ and \ }\br'_+(s)=\lim_{\e\to+0}\frac{\br(s+\e)-\br(s)}{\e}$$ at each point $s\in\IR$;
\item the set $\Lambda_{\br}=\{s\in \IR:\br'_-(s)\ne\br'_+(s)\}$ is at most countable;
\item$\br$ is non-expanding and has $\|\br'_-(s)\|=\|\br'_+(s)\|=1$ for every $s\in\IR$;
\item the functions $\br'_-$ and $\br'_+$ have bounded variation on bounded  subsets of $\IR$;
\item the function $\br$ is twice differentiable almost everywhere and its second derivative $\br''$ is measurable and locally integrable.
\item If the Banach space $X$ is absolutely smooth, then the derivative $\br'$ is locally absolutely continuous.
\item The Banach space $X$ is smooth if and only if the map $\br$ is $C^1$-smooth.
\end{enumerate}
\end{lemma}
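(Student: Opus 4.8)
The plan is to treat the natural parameterization as the composition $\br=\bp\circ\bt$ and to transport every property of the polar parameterization $\bp$ (Lemma~\ref{l:p}) across the increasing Lipschitz substitution $\bt$ (Lemma~\ref{l:t}). Because $\bt$ is \emph{increasing}, one-sided derivatives compose without interchanging sides, so I would first record the chain-rule identities
$$\br'_-(s)=\bp'_-(\bt(s))\cdot\bt'_-(s)\qquad\text{and}\qquad\br'_+(s)=\bp'_+(\bt(s))\cdot\bt'_+(s),$$
which give (2) at once. Item (4) follows since $\bt'_\pm(s)=1/\|\bp'_\pm(\bt(s))\|$ by Lemma~\ref{l:t}(2), whence $\|\br'_\pm(s)\|=1$; that $\br$ is non-expanding is then the mean value inequality applied to the continuous map $\br$ whose one-sided derivatives have norm $1$. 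For (3) I would observe that $\br'_-(s)\ne\br'_+(s)$ forces $s\in\Lambda_\bt$ or $\bt(s)\in\Lambda_\bp$, so $\Lambda_\br\subseteq\Lambda_\bt\cup\bs(\Lambda_\bp)$ is at most countable. For (5) each factor $\bp'_\pm\circ\bt$ and $\bt'_\pm$ has bounded variation on bounded sets (a bounded-variation function precomposed with a monotone map stays of bounded variation, and $\bt'_\pm$ is covered by Lemma~\ref{l:t}(4)), and their product does too; (6) is then obtained exactly as in Lemma~\ref{l:p}(5), using $\br'_-=\br'_+$ almost everywhere.

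For (1) I would differentiate the semi-periodicity $\bp(t+\pi)=-\bp(t)$ of Lemma~\ref{l:p}(1) to see that $\|\bp'_\pm\|$ is $\pi$-periodic; hence, by Lemma~\ref{l:perint}, the arc length obeys $\bs(t+\pi)=\bs(t)+L$, so inverting gives $\bt(s+L)=\bt(s)+\pi$ and therefore $\br(s+L)=\bp(\bt(s)+\pi)=-\bp(\bt(s))=-\br(s)$.

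Items (7) and (8) carry the real content, and I would prove (8) first since (7) depends on it. The key dictionary is that $X$ is smooth precisely when $S_X$ has no corner, i.e.\ $\bp'_-(t)\upuparrows\bp'_+(t)$ for every $t$; since $\|\br'_-(s)\|=\|\br'_+(s)\|=1$, this is equivalent to $\br'_-=\br'_+$ everywhere, so $\br'$ exists at every point. Continuity of $\br'$ then comes from convexity: the tangent direction is, locally, the derivative of a convex function, and a convex function that is differentiable everywhere has continuous derivative; hence $\br$ is $C^1$. The converse direction of (8) is immediate, since a $C^1$ map $\br$ with $\|\br'\|=1$ exhibits $X$ as $C^1$-smooth, hence smooth. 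Finally, for (7) I would use Definition~\ref{d:ss} to obtain an $AC^1$-smooth natural parameterization $\bq$ of $S_X$ (so $\bq'$ is locally absolutely continuous, hence continuous, and $\bq$ is $C^1$), and then apply the uniqueness Lemma~\ref{l:iso}, which is available because $X$ is smooth, hence $C^1$-smooth, and $\br$ is a genuine natural parameterization by (8). This yields $\br=\bq\circ\Phi$ for an affine isometry $\Phi(x)=\pm x+b$, whence $\br'=\pm\,\bq'\circ\Phi$ is locally absolutely continuous.

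The step I expect to be the main obstacle is the logical interlock between (7) and (8): the uniqueness theorem used in (7) presupposes that the polar-built map $\br$ is already $C^1$, which is exactly assertion (8), so (8) must be established \emph{intrinsically} through the ``no corner'' characterization of smoothness and the convexity argument for continuity of $\br'$, rather than by any appeal to an abstract parameterization. Making the equivalence ``smooth $\Leftrightarrow$ no corner'' and the continuity of the tangent direction fully rigorous is the delicate part; everything else is a routine transfer of Lemma~\ref{l:p} through the change of variable $\bt$.
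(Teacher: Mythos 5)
Your proposal is correct and follows essentially the same route as the paper: item (1) via Lemma~\ref{l:perint} and the identity $\bt(s+L)=\bt(s)+\pi$, items (2)--(6) by transferring Lemmas~\ref{l:p} and~\ref{l:t} through the increasing substitution $\bt$, item (7) by invoking Definition~\ref{d:ss} and the uniqueness Lemma~\ref{l:iso}, and item (8) by the unique-supporting-hyperplane and differentiable-convex-function argument. Your explicit remark that (8) must be proved before (7) --- since Lemma~\ref{l:iso} requires $\br=\bp\circ\bt$ to already be a $C^1$ natural parameterization --- is a correct reading of a dependency the paper leaves implicit, and your filled-in chain-rule details for (2)--(6) are exactly what the paper's one-line reference to Lemmas~\ref{l:p} and~\ref{l:t} intends.
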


\begin{proof} 1. By Lemma~\ref{l:p}(1), $\bp(t+\pi)=-\bp(t)$ for all $t\in\IR$, which implies $\bp'_-(t+\pi)=-\bp'_-(t)$ and $\|\bp'_-(t+\pi)\|=\|\bp'_-(t)\|$ for all $t\in\IR$. 
By Lemma~\ref{l:perint}, $\int_{t}^{t+\pi}\|\bp'_-(u)\|du=\int_{0}^{\pi}\|\bp_-'(u)\|\,du=L$ for any $t\in\IR$, which implies 
 $$\bs(t+\pi)=\int_0^{t+\pi}\|\bp_-'(t)\|\,dt=\int_{0}^t\|\bp_-'(t)\|dt+\int_{t}^{t+\pi}\|\bp'_-(t)\|dt=\bs(t)+L.$$ 
 Then for every $s\in \IR$ we have 
 $$\bs(\bt(s)+\pi)=\bs(\bt(s))+L=s+L=\bs(\bt(s+L))$$and $\bt(s+L)=\bt(s)+\pi$, by the injectivity of the function $\bs$. Consequently, $$\br(s+L)=\bp(\bt(s+L))=\bp(\bt(s)+\pi)=-\bp(\bt(s))=-\br(s).$$
 \smallskip
 
 2--6. The properties (2)--(6) follow from the definition of $\br=\bp\circ\bt$ and the corresponding properties of the functions $\bp$ and $\bt$, see Lemmas~\ref{l:p} and \ref{l:t}.
 \smallskip
 
7. If the Banach space $X$ is absolutely smooth, then its sphere admits a natural parameterization $\mathbf f:\IR\to S_X$ of $S_X$ whose derivative $\mathbf f'$ is locally absolutely continuous. By Lemma~\ref{l:iso}, there exists an isometry $\Phi$ of the real line such that $\br=\mathbf f\circ \Phi$. The local absolute continuity of the function $\mathbf f'$ implies the local absolute continuity of the function $\br'=\pm \mathbf f'\circ\Phi$.
\smallskip
 
8. If the Banach space $X$ is smooth, then at each point of the unit shere, the unit ball has a unique supporting hyperplane, which implies that $\br'_-(s)=\br'_+(s)$ for all $s\in\IR$ and hence the function $\br$ is differentiable. The convexity of the unit ball of $X$ implies that locally the curve $\br(\IR)$ coincides with the graph of some convex functions. Taking into account that differentiable convex functions are continuously differentiable (see \cite[3.7.4]{RA}), we conclude that the differentiable function $\br$ is $C^1$-smooth. 

Conversely, the $C^1$-smoothness of the function $\br$ implies that the unit sphere $S_X$ has a unique tangent line at each point, which implies that the unit ball $B_X$ has a unique supporting hyperplane at each point of the unit sphere. The latter means that the Banach space $X$ is smooth. 
\end{proof}

\begin{lemma}\label{l:ab3} Let $s,v,\alpha,\beta\in\IR$ be real numbers such that $\br'(s)=\br(v)$ and $\br'(v)=\alpha{\cdot}\br(v)+\beta{\cdot}\br(s)$. Then $|\alpha|\le \frac{(C+c)C}{c^2}|\beta|$.
\end{lemma}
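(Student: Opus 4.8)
The plan is to reduce this statement to its polar analogue, Lemma~\ref{l:ab2}, through the change of variable $t=\bt(s)$, $u=\bt(v)$ that links the natural parameterization $\br=\bp\circ\bt$ to the polar parameterization $\bp$. Recall that $\bt$ is strictly increasing with one-sided derivatives bounded below by $\tfrac{c^2}{2C^2}>0$ (Lemma~\ref{l:t}(6)), so wherever $\br$ is differentiable the chain rule gives $\br'(s)=\bt'(s)\cdot\bp'(\bt(s))$ with $\bt'(s)>0$, and likewise at $v$. Note also that $\bp'(t)\neq 0$ since $\|\bp'(t)\|\ge c/C>0$ by Lemma~\ref{l:p}(6), so the relation ${\upuparrows}$ is meaningful.

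First I would set $t=\bt(s)$ and $u=\bt(v)$, so that $\br(s)=\bp(t)$ and $\br(v)=\bp(u)$. The first hypothesis $\br'(s)=\br(v)$ then reads $\bt'(s)\cdot\bp'(t)=\bp(u)$; since $\bt'(s)>0$, this says exactly that $\bp'(t)=\tfrac{1}{\bt'(s)}\bp(u)$, i.e.\ $\bp'(t){\upuparrows}\bp(u)$, which is the first hypothesis of Lemma~\ref{l:ab2}. The second hypothesis $\br'(v)=\alpha\cdot\br(v)+\beta\cdot\br(s)$ becomes $\bt'(v)\cdot\bp'(u)=\alpha\cdot\bp(u)+\beta\cdot\bp(t)$; dividing by $\bt'(v)>0$ gives $\bp'(u)=\tilde\alpha\cdot\bp(u)+\tilde\beta\cdot\bp(t)$ with $\tilde\alpha=\alpha/\bt'(v)$ and $\tilde\beta=\beta/\bt'(v)$, which is the second hypothesis of Lemma~\ref{l:ab2}.

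Applying Lemma~\ref{l:ab2} to $t,u,\tilde\alpha,\tilde\beta$ yields $|\tilde\alpha|\le\frac{(C+c)C}{c^2}|\tilde\beta|$, and since the positive factor $1/\bt'(v)$ cancels from both sides, this is precisely $|\alpha|\le\frac{(C+c)C}{c^2}|\beta|$, as desired. The one point requiring genuine care — and the main obstacle — is the validity of the chain-rule decomposition at the specific points $s,v$: the hypotheses only assert that the two-sided derivatives $\br'(s),\br'(v)$ exist, so I would verify that this forces the two-sided derivatives $\bt'(s),\bt'(v)$ and $\bp'(t),\bp'(u)$ to exist as well. This follows from the one-sided chain rule $\br'_\pm(s)=\bt'_\pm(s)\cdot\bp'_\pm(\bt(s))$ (valid since $\bt$ is monotone and continuous) together with the strict positivity of $\bt'_\pm$: when $\bt$ is differentiable at $s$, equality $\br'_-(s)=\br'_+(s)$ immediately forces $\bp'_-(t)=\bp'_+(t)$, so $\bp$ is differentiable at $t$; the remaining values of $s$ lie in the countable set $\Lambda_{\bt}$ of Lemma~\ref{l:t}(3), where a separate check (or the observation that the statement is naturally read at differentiability points of $\bp$, these forming a full-measure set by Lemma~\ref{l:p}(5)) disposes of the exceptional cases. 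With this bookkeeping in place, the reduction above is complete.
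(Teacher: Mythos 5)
Your proposal is correct and takes essentially the same route as the paper: the paper's proof also sets $t=\bt(s)$, $u=\bt(v)$ and reduces to Lemma~\ref{l:ab2}, writing $\bp'(t)=\|\bp'(t)\|\cdot\br'(s)=\|\bp'(t)\|\cdot\bp(u)$ and $\bp'(u)=\|\bp'(u)\|\cdot\alpha\cdot\bp(u)+\|\bp'(u)\|\cdot\beta\cdot\bp(t)$, then cancelling the positive factor $\|\bp'(u)\|\ge c/C$ — which is exactly your rescaling by $\bt'(v)$, since $\bt'=1/\|\bp'\circ\bt\|$ by Lemma~\ref{l:t}(2). Your additional bookkeeping about the existence of two-sided derivatives at the specific points $s,v$ is care the paper silently omits, but it does not alter the argument.
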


\begin{proof} Consider the real numbers $t=\bt(s)$ and $u=\bt(v)$ and observe that
$$\bp'(t)=\|\bp'(t)\|\cdot\br'(s)=\|\bp'(t)\|\cdot\br(v)=\|\bp'(t)\|\cdot\bp(\bt(v))=\|\bp'(t)\|\cdot\bp(u)\upuparrows\bp(u)$$ and $$\bp'(u)=\|\bp'(u)\|\cdot\br'(v)=\|\bp'(u)\|{\cdot}\alpha{\cdot}\br(v)+\|\bp'(u)\|{\cdot}\beta{\cdot}\br(s)=\|\bp'(u)\|{\cdot}\alpha{\cdot}\bp(u)+\|\bp'(u)\|{\cdot}\beta{\cdot}\bp(t).$$ Now Lemma~\ref{l:ab2} ensures that
$\|\bp'(u)\|{\cdot}|\alpha|\le \frac{(C+c)C}{c^2}\|\bp'(u)\|{\cdot}|\beta|$. 
Taking into account that $\|\bp'(u)\|\ge \frac{c}{C}>0$ (see Lemma~\ref{l:p}), we conclude that $|\alpha|\le \frac{(C+c)C}{c^2}|\beta|$.
\end{proof}


 
\section{The radial and tangential curvatures of a 2-based Banach space}\label{s:c}

Let $(X,\|\cdot\|)$ be a $2$-based Banach space. By Lemma~\ref{l:r}, the natural parameterization $\br:\IR\to S_X$ of the unit sphere $S_X$ of $X$ is twice differentiable almost everywhere and the second derivative $\br''$ of $\br$ is measurable and locally integrable.

Let $\ddot\Omega_\br$ be the set of parameters $s\in\IR$ at which the derivatives $\br'(s)$ and $\br''(s)$ exist. It is easy to see that $\ddot\Omega_\br$ is a Borel subset in the real line.  As we already know, the set $\ddot\Omega_\br$ has full measure (which means that $\IR\setminus\ddot\Omega_\br$  has Lebesgue measure zero). For every $s\in\ddot\Omega_\br$ the norm $\|\br''(s)\|$ of the vector $\br''(s)$ is called the {\em curvature} of the sphere $S_X$ at the point $\br(s)$. Since the vectors $\br(s)$ and $\br'(s)$ form a basis of the linear space $X$, there are unique real numbers $\rho(s)$ and $\tau(s)$ such that
\begin{equation}\label{eq}
\br''(s)=-\rho(s)\cdot \br(s)+\tau(s)\cdot\br'(s).
\end{equation}
The numbers $\rho(s)$ and $\tau(s)$ are called the {\em radial} and {\em tangential curvatures} of the sphere $S_X$ at the point $\br(s)$. The minus sign in the equation (\ref{eq}) is chosen to make the radial curvature $\rho(s)$ non-negative. 

The functions $\rho$ and $\tau$ are measurable, being solutions of the system of two linear equations with measurable coefficients $\br,\br',\br''$.
To express $\rho$ and $\tau$ via $\br$, $\br'$ and $\br''$, denote by ${\mathbf e}_1^*,{\mathbf e}_2^*$ the biorthogonal functionals of the basis ${\mathbf e}_1,{\mathbf e}_2$ of the Banach space $X$.  
By  Cramer's rule,
\begin{equation}\label{Cramer}
\left\{\begin{gathered}
\rho=\frac{({\mathbf e}_2^*\circ\br'')\cdot({\mathbf e}_1^*\circ \br')-({\mathbf e}_1^*\circ \br'')\cdot({\mathbf e}_2^*\circ\br')}{({\mathbf e}_1^*\circ\br)\cdot({\mathbf e}_2^*\circ\br')-({\mathbf e}_2^*\circ\br)\cdot({\mathbf e}_1^*\circ\br')}\\
\\
\tau=\frac{({\mathbf e}_2^*\circ\br'')\cdot({\mathbf e}_1^*\circ \br)-({\mathbf e}_1^*\circ\br'')\cdot({\mathbf e}_2^*\circ\br)}{({\mathbf e}_1^*\circ\br)\cdot({\mathbf e}_2^*\circ\br')-({\mathbf e}_2^*\circ\br)\cdot({\mathbf e}_1^*\circ\br')}
\end{gathered}\right..
\end{equation}
These formulas imply that the functions $\rho$ and $\tau$ inherit many  properties from the functions $\br,\br',\br''$. In particular, the periodicity of the vector-function $\br,\br',\br''$ implies the periodicity of the functions  $\rho$ and $\tau$.

\begin{lemma} The equalities $\rho(s+L)=\rho(s)$ and $\tau(s+L)=\tau(s)$ hold for all $s\in\ddot\Omega_\br$.
\end{lemma}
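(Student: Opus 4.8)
The plan is to exploit the antipodal symmetry $\br(s+L)=-\br(s)$ established in Lemma~\ref{l:r}(1), differentiate it twice, and then invoke the uniqueness of the coefficients in the defining equation~(\ref{eq}). First I would observe that the identity $\br(s+L)=-\br(s)$, valid for every $s\in\IR$, forces the set $\ddot\Omega_\br$ of twice-differentiability points to be invariant under the shift $s\mapsto s+L$: since the map $s\mapsto\br(s+L)$ is merely a translate of $\br$ and $s\mapsto-\br(s)$ is differentiable exactly where $\br$ is, the function $\br$ is twice differentiable at $s$ if and only if it is twice differentiable at $s+L$. Thus it suffices to argue for $s\in\ddot\Omega_\br$, knowing that $s+L\in\ddot\Omega_\br$ as well.

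Next I would differentiate the antipodal identity. Differentiating $\br(s+L)=-\br(s)$ once gives $\br'(s+L)=-\br'(s)$, and differentiating a second time gives $\br''(s+L)=-\br''(s)$, both valid on $\ddot\Omega_\br$. Now I evaluate the defining equation~(\ref{eq}) at the point $s+L$, namely
\[
\br''(s+L)=-\rho(s+L)\cdot\br(s+L)+\tau(s+L)\cdot\br'(s+L),
\]
and substitute the three antipodal relations just obtained. After cancelling the common factor $-1$ this yields
\[
\br''(s)=-\rho(s+L)\cdot\br(s)+\tau(s+L)\cdot\br'(s).
\]

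Finally I would compare this with the original equation $\br''(s)=-\rho(s)\cdot\br(s)+\tau(s)\cdot\br'(s)$. Since the vectors $\br(s)$ and $\br'(s)$ form a basis of $X$ (this is exactly what guarantees the uniqueness of $\rho(s),\tau(s)$ in the first place), the representation of $\br''(s)$ in this basis is unique, so the coefficients must agree: $\rho(s+L)=\rho(s)$ and $\tau(s+L)=\tau(s)$. I do not anticipate a genuine obstacle here; the only point requiring a moment of care is the shift-invariance of $\ddot\Omega_\br$, which ensures that both $s$ and $s+L$ are legitimate points at which equation~(\ref{eq}) may be applied. Everything else is a direct consequence of the chain rule and the linear independence of $\br(s)$ and $\br'(s)$.
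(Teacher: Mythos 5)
Your proposal is correct and follows essentially the same route as the paper: differentiate the antipodal identity $\br(s+L)=-\br(s)$ twice, substitute into equation~(\ref{eq}) at $s+L$, and conclude by the linear independence of $\br(s)$ and $\br'(s)$. The only difference is cosmetic — you make explicit the shift-invariance of $\ddot\Omega_\br$ (a point the paper leaves tacit) and compare coefficients in the basis $\br(s),\br'(s)$ rather than $\br(s+L),\br'(s+L)$, which differ only by sign.
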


\begin{proof} Lemma~\ref{l:r}(1) implies that $$\br(s+L)=-\br(s),\;\;\br'(s+L)=-\br'(s),\mbox{ \ and \ }\br''(s+L)=-\br''(s)$$and hence
\begin{multline*}
\rho(s+L)\cdot \br(s+L)-\tau(s+L)\cdot\br'(s+L)=-\br''(s+L)=\br''(s)=\\ -\rho(s)\cdot\br(s)+\tau(s)\cdot\br'(s)=\rho(s)\cdot\br(s+L)-\tau(s)\cdot\br'(s+L).
\end{multline*}
The linear independence of the vectors $\br(s+L)$ and $\br'(s+L)$ implies that $\rho(s+L)=\rho(s)$ and $\tau(s+L)=\tau(s)$.
\end{proof}

The radial and tangential curvatures determine an absolutely smooth $2$-based Banach space uniquely up to a linear isometry.

\begin{theorem}\label{t:main2} Two absolutely smooth $2$-dimensional Banach spaces $Z,Y$ are linearly isometric if and only if for some bases in $Z,Y$ the corresponding radial and tangential curvatures of the $2$-based Banach spaces $Z,Y$ coincide.
\end{theorem}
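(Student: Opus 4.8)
The plan is to prove both implications by transporting the natural parameterization of $S_Z$ to that of $S_Y$ through a linear map, using equation (\ref{eq}) as a second-order linear ODE satisfied by the natural parameterization.

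For the (easy) forward implication, suppose $T:Z\to Y$ is a linear isometry, fix a basis $\be_1,\be_2$ of $Z$, and equip $Y$ with the basis $T\be_1,T\be_2$. With respect to these bases the vector $\mathbf e^{it}$ in $Y$ is exactly the image under $T$ of the corresponding vector in $Z$, so since $\|Tx\|=\|x\|$ the polar parameterizations satisfy $\bp_Y=T\circ\bp_Z$. Hence the arc-length functions coincide, $\bs_Y=\bs_Z$, whence $\bt_Y=\bt_Z$ and therefore $\br_Y=\bp_Y\circ\bt_Y=T\circ\bp_Z\circ\bt_Z=T\circ\br_Z$ (with no shift, because both polar parameterizations start at $t=0$). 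Applying the linear map $T$ to the identity $\br_Z''=-\rho_Z\br_Z+\tau_Z\br_Z'$ gives $\br_Y''=-\rho_Z\br_Y+\tau_Z\br_Y'$, and comparing with (\ref{eq}) for $\br_Y$, the uniqueness of the coefficients in the basis $\{\br_Y(s),\br_Y'(s)\}$ of $Y$ forces $\rho_Z=\rho_Y$ and $\tau_Z=\tau_Y$.

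For the converse, assume $\rho_Z=\rho_Y=:\rho$ and $\tau_Z=\tau_Y=:\tau$. Since $Z,Y$ are absolutely smooth, they are $C^1$-smooth (Lemma~\ref{l:r}(8)), so $\br_Z,\br_Y$ are $C^1$ with locally absolutely continuous derivatives (Lemma~\ref{l:r}(7)); consequently $\br_Z'',\br_Y''$ are locally integrable. The determinant $\det[\br,\br']$ of the coordinate functions is continuous, $L$-periodic and never zero (the vectors $\br(s),\br'(s)$ form a basis), hence bounded away from $0$, so the Cramer formulas (\ref{Cramer}) show that $\rho,\tau$ are locally integrable. Now define the linear isomorphism $T:Z\to Y$ by $T\br_Z(0)=\br_Y(0)$ and $T\br_Z'(0)=\br_Y'(0)$, which is legitimate as $\{\br_Z(0),\br_Z'(0)\}$ and $\{\br_Y(0),\br_Y'(0)\}$ are bases. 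Setting $\mathbf g=T\circ\br_Z$, linearity and (\ref{eq}) give $\mathbf g''=-\rho\,\mathbf g+\tau\,\mathbf g'$ almost everywhere, the same equation that $\br_Y$ satisfies, with the same initial data. To conclude $\mathbf g=\br_Y$, I put $\mathbf h=\mathbf g-\br_Y$ and $M(s)=\|\mathbf h(s)\|+\|\mathbf h'(s)\|$; then $M$ is locally absolutely continuous with $M(0)=0$, and almost everywhere $M'\le\|\mathbf h'\|+\|{-}\rho\,\mathbf h+\tau\,\mathbf h'\|\le(1+|\rho|+|\tau|)M$. Grönwall's inequality, applied with the locally integrable weight $1+|\rho|+|\tau|$, then forces $M\equiv0$, so $T\circ\br_Z=\br_Y$. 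Therefore $T(S_Z)=T(\br_Z(\IR))=\br_Y(\IR)=S_Y$, and a linear bijection carrying $S_Z$ onto $S_Y$ satisfies $\|Tx\|=\|x\|$ for all $x$ by homogeneity, so $T$ is the required linear isometry.

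The main obstacle is the uniqueness step for the second-order linear equation (\ref{eq}) when its coefficients $\rho,\tau$ are merely locally integrable rather than continuous, so the classical Picard--Lindel\"of theorem does not apply directly. The Grönwall estimate above is exactly the Carathéodory-type uniqueness statement needed in this regularity, and it is precisely here that absolute smoothness is indispensable: it yields local absolute continuity of $\br'$, hence local integrability of $\br''$ and therefore of $\rho$ and $\tau$, which is what makes the Grönwall weight integrable and the argument valid.
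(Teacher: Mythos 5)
Your proposal is correct and follows essentially the same route as the paper: both proofs extract local integrability of $\rho,\tau$ from the Cramer formulas (\ref{Cramer}) and the local absolute continuity of $\br'$, define the linear map by matching the initial data $\br(0),\br'(0)$, and conclude by uniqueness for the second-order linear equation (\ref{eq}) with locally integrable coefficients. The only difference is in the execution of that uniqueness step: you use a Gr\"onwall estimate with weight $1+|\rho|+|\tau|$, while the paper runs a maximum-and-contradiction (contraction) argument on an interval where $\int(|\rho|+|\tau|)<1$ --- two standard and equivalent devices for Carath\'eodory-type uniqueness.
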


\begin{proof} The ``only if'' part is trivial and holds for any (not necessarily absolutely smooth) Banach spaces $Y,Z$. To prove the ``if'' part, assume that the Banach spaces $Y,Z$ have bases such that $\rho_Y=\rho_Z$ and $\tau_Y=\tau_Z$, where $\rho_Y$ and $\tau_Y$ (resp. $\rho_Z$ and $\tau_Z$) are the radial and tangential curvatures of the unit sphere of the $2$-based Banach space $Y$ (resp. $Z$).

Let $\br_Y$ and $\br_Z$ be the natural parameterizations of the spheres $S_Y$ and $S_Z$ of the $2$-based Banach spaces $Y,Z$, respectively. By Lemma~\ref{l:iso}, the absolute smoothness of the Banach spaces $Y,Z$ implies the local absolute continuity of the derivatives $\br'_Y$ and $\br'_Z$. Then the second derivatives $\br''_Y$ and $\br''_Z$ exist almost everywhere and for every $s\ge 0$ we have $$\int_0^s\br''_Y(t)dt=\br'_Y(s)-\br'_Y(0)\mbox{ \ and \ }\int_0^s\br_Z''(t)dt=\br_Z'(s)-\br_Z'(0),$$ see Theorem 7.1.5 in \cite{RA}. 

By the Cramer's formulas (\ref{Cramer}), the continuity of the functions $\br_Y$ and $\br_Y'$ and the local integrability of the function $\br''_Y$ imply the local integrability of the functions  $\rho_Y$ and $\tau_Y$.

Let $F:Y\to Z$ be the linear operator such that $F(\br_Y(0))=\br_Z(0)$ and $F(\br_Y'(0))=\br_Z'(0)$. We claim that $F\circ\br_Y(s)=\br_Z(s)$ for all $s\ge 0$.
It suffices to show that the set $$E=\{s\in\IR:F\circ \br_Y(s)=\br_Z(s)\mbox{ \ and \ }F\circ\br'_Y(s)=\br'_Z(s)\}$$ contains the half-line $[0,\infty)$. Let $a=\sup\{s\in[0,\infty):[0,s]\subseteq E\}$. If $a=\infty$, then $[0,\infty)\subseteq E$ and we are done. So, assume that $a$ is finite. 

By the local integrability of the functions $\rho:=\rho_Y=\rho_Z$ and $\tau:=\tau_Y=\tau_Z$, there exists a positive real number $\e<1$ such that
$$\int_a^{a+\e}(|\rho(t)|+|\tau(t)|)dt<1.$$
Let $$M=\max_{t\in[a,a+\e]}\max\{\|F(\br_Y(t))-\br_Z(t)\|_Z,\|F(\br'_Y(t))-\br'_Z(t)\|_Z\}$$ and $\delta\in[0,\e]$ be a real number such that 
$$\max\{\|F(\br_Y(a+\delta))-\br_Z(a+\delta)\|_Z,\|F(\br'_Y(a+\delta))-\br'_Z(a+\delta)\|_Z\}=M.$$ The definition of the number $a$ implies that $M>0$.

Now observe that
$$
\begin{aligned}
&\|F\circ\br_Y(a+\delta)-\br_Z(a+\delta)\|_Z=\|F\circ\br_Y(a+\delta)-F\circ\br_Y(a)+\br_Z(a)-\br_Z(a+\delta)\|_Z=\\
&\big\|\int_a^{a+\delta}(F(\br'_Y(t))-\br_Z'(t))dt\big\|_Z\le 
\int_a^{a+\delta}\|F(\br'_Y(t))-\br_Z'(t)\|_Zdt\le M\cdot\delta<M,
\end{aligned}
$$
and
$$
\begin{aligned}
&\|F\circ\br'_Y(a+\delta)-\br'_Z(a+\delta)\|_Z=\\
&\big\|\int_a^{a+\delta}(F(\br''_Y(t))-\br_Z''(t))dt\big\|_Z\le \int_a^{a+\delta}\|F(\br''_Y(t))-\br_Z''(t)\|dt=\\
&\int_a^{a+\delta}\|F(-\rho_Y(t)\br_Y(t)+\tau_Y(t)\br_Y'(t))-(-\rho_Z(t)\br_Z(t)+\tau_Z(t)\br_Z'(t))\|_Zdt=\\
&\int_a^{a+\delta}\|-\rho(t)(F(\br_Y(t))-\br_Z(t))+\tau(t)(F(\br_Y'(t))-\br'_Z(t))\|_Zdt\le\\
&\int_a^{a+\delta}\big(|\rho(t)|\cdot\|F(\br_Y(t))-\br_Z(t)\|_Z+|\tau(t)|\cdot\|F(\br_Y'(t))-\br'_Z(t)\|_Z\big)\,dt\le\\
&M\cdot\int_a^{a+\delta}(|\rho(t)|+|\tau(t)|)dt<M,
\end{aligned}
$$
which contradicts the definition of the number $M$.
This contradiction shows that $a=\infty$. Then 
$$F(S_Y)=\{F(\br_Y(s)):s\ge 0\}=\{\br_Z(s):s\ge 0\}=S_Z,$$
which implies that $F$ is a linear isometry of the Banach spaces $X$ and $Y$.
\end{proof}

\begin{example}\label{ex:H} For the complex plane $\IC$, considered as a $2$-based Banach space  with basic vectors ${\mathbf e}_1=1$ and ${\mathbf e}_2=i$ and norm $\|z\|=|z|$, we have $$\br(s)=\bp(s)=e^{is},\;\;\br'(s)=ie^{is},\;\;\br''(s)=-e^{is}=-\br(s)$$ and hence $\rho(s)=1$ and $\tau(s)=0$ for all $s\in\IR$.
\end{example}


\begin{proposition} For a $2$-based Banach space $X$ the following conditions are equivalent:
\begin{enumerate}
\item $X$ is isometric to a Hilbert space;
\item $X$ has radial curvature $\rho(s)=1$ and tangential curvature $\tau(s)=0$ for any $s\in\IR$.
\item $X$ is smooth and has constant radial and tangential curvatures.
\end{enumerate}
\end{proposition}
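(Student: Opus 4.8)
The plan is to establish the three equivalences through the cycle (1)$\Rightarrow$(2)$\Rightarrow$(3)$\Rightarrow$(1). The implication (1)$\Rightarrow$(2) is the purely computational one. If $X$ is isometric to a Hilbert space, I would fix an orthonormal basis; then $\br(s)=\cos(s){\mathbf e}_1+\sin(s){\mathbf e}_2$ is a natural parameterization of $S_X$ (its derivative has constant unit length and its image is all of $S_X$), and differentiating twice gives $\br''=-\br$, that is $\rho\equiv 1$ and $\tau\equiv 0$, exactly as in Example~\ref{ex:H}. By Lemma~\ref{l:iso} every other natural parameterization differs from this one by an isometry $s\mapsto a s+b$ of $\IR$ with $a\in\{-1,1\}$, under which $\rho(s)$ becomes $\rho(as+b)$ and $\tau(s)$ becomes $a\,\tau(as+b)$; hence the identities $\rho\equiv 1$, $\tau\equiv 0$ are independent of the chosen basis, and (2) holds for the fixed basis of $X$.

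For the two remaining arrows the common engine is the reconstruction of $\br$ from the curvature equation $\br''=-\rho_0\br+\tau_0\br'$ with constant coefficients. The key mechanism is that, once $\br'$ is known to be locally absolutely continuous, the Lebesgue theorems turn the almost-everywhere identity into the integral equation $\br'(s)=\br'(0)+\int_0^s\br''(u)\,du$, and then a Gr\"onwall/Picard comparison against the smooth solution of the same linear ODE with initial frame $\br(0),\br'(0)$ forces $\br$ to be that $C^\infty$ solution; this is exactly the type of uniqueness estimate carried out in the proof of Theorem~\ref{t:main2}. For (2)$\Rightarrow$(3) I would apply this with $\rho_0=1$, $\tau_0=0$: the solution is $\br(s)=\br(0)\cos s+\br'(0)\sin s$, which is $C^2$, so by Lemma~\ref{l:r}(8) the space $X$ is smooth, while $\rho\equiv 1$, $\tau\equiv 0$ are constant; thus (3) holds. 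For (3)$\Rightarrow$(1) I would exploit periodicity: by Lemma~\ref{l:r}(1) we have $\br(s+L)=-\br(s)$, so $\br$ is bounded and $2L$-periodic. The characteristic equation $\lambda^2-\tau_0\lambda+\rho_0=0$ (with $\rho_0\ge 0$ by construction) can then have no root with nonzero real part, which forces $\tau_0=0$ and $\rho_0>0$; the bounded solutions are $\br(s)=A\cos(\sqrt{\rho_0}\,s)+B\sin(\sqrt{\rho_0}\,s)$ with $A=\br(0)$ and $B=\br'(0)/\sqrt{\rho_0}$ linearly independent.

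It then remains to pass from this explicit form to a Hilbert structure. As $s$ runs over $\IR$ the vector $\br(s)$ traces the full ellipse $\{A\cos\theta+B\sin\theta:\theta\in\IR\}=S_X$, and the linear isomorphism $L:\IR^2\to X$, $L(\xi,\eta)=\xi A+\eta B$, carries the Euclidean unit circle onto $S_X$, hence (being linear) the Euclidean unit disk onto $B_X$; therefore $\|Lv\|=|v|$ for every $v$, so $L$ is a linear isometry of a $2$-dimensional Hilbert space onto $X$, which is (1). The main obstacle, and the only point requiring more than formal manipulation, is precisely the regularity step in the previous paragraph: deducing the classical ODE from the merely almost-everywhere curvature relation needs local absolute continuity of $\br'$, which is automatic under absolute smoothness by Lemma~\ref{l:r}(7) but is \emph{not} granted by smoothness alone, since a smooth convex curve may carry a singular continuous curvature that the a.e.\ second derivative cannot detect. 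Under the bare hypothesis that $X$ is smooth I would close this gap geometrically in the case $\rho_0=1,\tau_0=0$: writing $\br'(s)=\br(\varphi(s))$ with $\varphi$ non-decreasing (the tangent direction of the convex curve $S_X$ turns monotonically, in the spirit of Lemma~\ref{l:ab3}), the chain rule together with $\|\br''(s)\|=\|{-}\br(s)\|=1=\|\br'(\varphi(s))\|$ gives $\varphi'=1$ almost everywhere, while $\br(s+2L)=\br(s)$ forces the total increment of $\varphi$ over one period to equal $2L$; this annihilates both the jumps and the singular continuous part of $\varphi$, yielding $\varphi(s)=s+c$, whence $\br'(s)=\br(s+c)$ and, by bootstrapping, $\br\in C^\infty$ with $\br''=-\br$, after which the ellipse argument applies verbatim.
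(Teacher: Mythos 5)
Your skeleton largely coincides with the paper's: (1)$\Rightarrow$(2) is the computation of Example~\ref{ex:H}; your ``engine'' for (2)$\Rightarrow$(3) is the uniqueness estimate from the proof of Theorem~\ref{t:main2}, which is exactly how the paper proves (2)$\Rightarrow$(1); and your (3)$\Rightarrow$(1) (characteristic roots, periodicity forcing purely imaginary roots, hence $\tau_0=0$, $\rho_0>0$, then the ellipse giving a Hilbert structure) is the paper's argument verbatim. The genuine problem lies in the step you yourself flag as ``the main obstacle.'' The paper disposes of it in one line that you miss: conditions (2) and (3) assert that the curvatures are defined, i.e.\ that $\br''(s)$ exists, and are constant at \emph{every} $s\in\IR$, not merely almost everywhere. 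Hence $\br''$ coincides everywhere with $\tau_0\br'-\rho_0\br$, which is a continuous function (continuity of $\br'$ follows from smoothness, or already from the mere existence of $\br''$ everywhere); therefore $\br'$ is continuously differentiable, in particular locally absolutely continuous, with no appeal to absolute smoothness. Your claim that this regularity ``is not granted by smoothness alone'' is correct for an almost-everywhere curvature relation, but the hypothesis here is pointwise, and that pointwise character is precisely what makes the proposition true.

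Because you miss this observation, your (3)$\Rightarrow$(1) as written is circular. In (3) the constants $\rho_0,\tau_0$ are arbitrary a priori; your only mechanism for concluding $\tau_0=0$ and $\rho_0>0$ is the characteristic-equation/periodicity argument, which needs $\br$ to be a classical solution, i.e.\ needs the regularity; but your geometric workaround for the regularity (the phase shift $\varphi$ with $\varphi'=1$ a.e.\ plus total increment $2L$ over a period) is available only after you already know $\rho_0=1$ and $\tau_0=0$, since it rests on $\|\br''(s)\|=\|{-}\br(s)\|=1$. It cannot be extended: for $\tau_0=0$ and $\rho_0\in(0,1)$ the increment argument produces no contradiction (the singular part of $\varphi$ would simply carry mass $2L(1-\rho_0)$), and for the a.e.\ reading your workaround is designed for, the implication (3)$\Rightarrow$(1) is in fact \emph{false}: the paper's remark after Theorem~\ref{t:main2} points out that Cantor-function constructions yield smooth non-Hilbert spaces whose radial and tangential curvatures vanish almost everywhere, which are constant a.e.\ curvatures. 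So the pointwise reading is forced, and under it your workaround becomes unnecessary while your main line becomes complete once the one-line continuity observation is inserted. The same insertion is also needed to legitimize your (2)$\Rightarrow$(3), where you invoke the engine requiring local absolute continuity of $\br'$ without ever establishing it.
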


\begin{proof} $(1)\Ra(2)$ If $X$ is isometric to a Hilbert space, then it is linearly isometric to the Hilbert space $(\IC,|\cdot|)$. By Example~\ref{ex:H}, $\rho(s)=1$ and $\tau(s)=0$ for all $s\in\IR$.
\smallskip

$(2)\Ra(1)$ If $\rho(s)=1$ and $\tau(s)=0$ for all $s\in\IR$, then $\br''(s)=-\br(s)$ is continuous. Then the derivative $\br'$ is continuously differentiable and hence locally absolutely continuous.  By (the proof of) Theorem~\ref{t:main2}, the Banach space $X$ is  linearly isometric to the Hilbert space $(\IC,|\cdot|)$.
\smallskip

$(1)\Ra(3)$ This implication follows from Example~\ref{ex:H}.
\smallskip

$(3)\Ra(1)$ Assume that $X$ is smooth and has constant radial and tangential curvatures $\rho$ and $\tau$. Then the natural parameterization $\br$ of $X$ satisfies the differential equation $\br''-\tau\br'+\rho\br=0$ with constant coefficients, which implies that $\br'$ is continuously differentiable. It is well-known that each $C^2$-smooth solution $z:\IR\to\IC$ of the differential equation $z''-\tau z'+\rho z$ can be written as the linear combination of the functions $e^{\lambda_1s}$, $e^{\lambda_2s}$ where $\lambda_1,\lambda_2$ are the roots of the  equation $\lambda^2-\tau\lambda+\rho=0$. The functions $e^{\lambda_1s}$ and $e^{\lambda_2s}$ are periodic and non-constant if and only if the real part of the complex numbers $\lambda_1,\lambda_2$ is zero, which happens if and only if $\tau=0$ and $\rho>0$. In this case the natural parameterization $\br$ is of the form $\br(s)=x_1e^{i\sqrt{\rho}s}+x_2e^{-i\sqrt{\rho}s}$ for some linearly independent vectors $x_1,x_2\in X$. Now we see that the unit sphere of $X$ coincides with the ellipse $\{x_1e^{i\sqrt{\rho}s}+x_2e^{-i\sqrt{\rho}s}:s\in\IR\}$ and $X$ is isometric to a Hilbert space.
\end{proof}

\begin{remark} Any polyhedral $2$-based Banach space $X$ has radial and tangential curvatures equal to zero everywhere except for finitely many points. Using the Cantor function (which is non-decreasing and non-constant, but has zero derivative almost everywhere, see \cite[1.3.3]{RA}), it is possible to construct two non-isometric smooth two-dimensional Banach spaces whose radial and tangential curvatures are equal to zero almost everywhere. This  shows that the absolute smoothness of the Banach spaces $Y,Z$ in Theorem~\ref{t:main2} cannot be weakened to the smoothness of $Y$ and $Z$. 
\end{remark}

\begin{remark} Theorem~\ref{t:main2} is actually the standard Uniqueness Theorem for solutions of the second order linear differential equation
\begin{equation}\label{eq:Hill}
\br''-\tau{\cdot}\br'+\rho{\cdot}\br=0
\end{equation}  
in a suitable Sobolev space (see \cite{Leoni}). Observe that the natural parameterization $\br$ of the unit sphere $S_X$ of a 2-based Banach space $X$  is $2L$-periodic and the radial and tangential curvatures $\rho$ and $\tau$ are $L$-periodic functions. The problem of the existence of a periodic solution of the  differential equation (\ref{eq:Hill}) with periodic coefficients $\rho$ and $\tau$ is a subject of the Floquet Theory \cite{Floquet} (see also \cite{MW}), which has many applications, in particular, to Celestial Mechanics. One of the conclusions of this theory will be exploited in the proof of Lemma~\ref{l:tau0}.
\end{remark}

\begin{exercise} Calculate the (radial and tangential) curvature of the unit sphere of the $2$-based Banach space $(\IC,\|\cdot\|)$ endowed with the basic vectors $1$ and $i$ and norm $\|x+iy\|=(|x|^p+|y|^p)^{\frac1p}$ for  $p\in(1,\infty)$.
\end{exercise}

\section{Radial and tangential supercurvatures of a Banach space}\label{s:RT}

In this section for any smooth $2$-based Banach space $X$ we introduce two functions $\Rho,\Tau:\IR\to\IR$ which are tightly related to the radial and tangential curvatures of $X$ but have better continuity and differentiability properties.

If the $2$-based Banach space $X$ is $C^1$-smooth, then its natural parameterization $\br$ is continuously differentiable and for every $s\in\IR$ the vector $\br'(s)$ is well-defined and belongs to the unit sphere $S_X$. Then we can find a unique real number $\varphi(s)$ in the interval $(s,s+2L)$ such that $\br'(s)=\br(\varphi(s))$. 
The function $\varphi:\IR\to\IR$ will be called {\em the phase shift} of the natural parameterization $\br$.

Observe that for every $s\in\IR$ the vectors $\br(s)$ and $\br'(s)$ form a basis of the linear space $X$. Consequently, we can write the vector $\br'(\varphi(s))$ as the linear combination
$$\br'(\varphi(s))=-\Rho(s)\cdot\br(s)+\Tau(s)\cdot\br'(s)$$for unique real numbers $\Rho(s)$ and $\Tau(s)$, called the {\em radial and tangential supercurvatures} of the sphere $S_X$ of $X$. 

In the following lemma we establish some properties of the radial and tangential supercurvatures.

\begin{lemma}\label{l:RT} If the $2$-based Banach space $X$ is $C^1$-smooth, then 
\begin{enumerate}
\item The phase shift $\varphi$ is a continuous monotone function.
\item $\Rho$ and $\Tau$ are continuous functions such that $|\Tau(s)|\le\frac{(C+c)C}{c^2}|\Rho(s)|$ and $|\Rho(s)|\ge \frac{c^2}{C^2+Cc+c^2}$ for all $s\in\IR$.
\item If $X$ is absolutely smooth, then the functions $\varphi$, $\Rho$, $\Tau$ and $\frac{\Tau}{\Rho}$ are locally absolutely continuous.
\item For any $s\in\ddot\Omega_{\br}$ we have $\|\br''(s)\|=\varphi'(s)$, $\rho(s)=\Rho(s)\cdot\varphi'(s)$, $\tau(s)=\Tau(s)\cdot\varphi'(s)$, and $|\tau(s)|\le\frac{(C+c)C}{c^2}|\rho(s)|$.
\end{enumerate}
\end{lemma}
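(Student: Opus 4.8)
The plan is to establish the four items in the order (1), (2), (4), (3): the analytic identities of (4) require only the $C^1$-hypothesis together with the monotonicity proved in (1), while (3) additionally uses absolute smoothness, so it is cleanest to postpone it. For (1), I would obtain continuity from covering-space theory. As in the proof of Lemma~\ref{l:iso}, the unit-speed map $\br:\IR\to S_X$ is a covering map with $\br(a)=\br(b)$ iff $a\equiv b\pmod{2L}$, and $g:=\br'$ is continuous; since $\IR$ is simply connected, $g$ lifts through $\br$ to a continuous $\tilde\varphi:\IR\to\IR$ with $\br\circ\tilde\varphi=\br'$, unique up to an integer multiple of $2L$. Because $\br(s)$ and $\br'(s)$ are linearly independent, they are distinct points of $S_X$, so $\br(\tilde\varphi(s))=\br'(s)\ne\br(s)$ gives $\tilde\varphi(s)-s\notin 2L\IZ$ for every $s$; the continuous map $s\mapsto\tilde\varphi(s)-s$ thus lies in a single component $(2Lk_0,2L(k_0+1))$ of $\IR\setminus 2L\IZ$, and subtracting $2Lk_0$ yields the unique continuous lift $\varphi$ with $\varphi(s)\in(s,s+2L)$. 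The monotonicity is the geometric heart: writing $\br=\bp\circ\bt$ with $\bt$ increasing, the Euclidean angle of $\br'(s)$ equals $\bt(\varphi(s))$, and by convexity of $B_X$ the tangent direction of the convex curve $S_X$ is a non-decreasing function of position along the curve; since $\bt$ is increasing this forces $\varphi$ to be non-decreasing.

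For (2), the relation $\br'(\varphi(s))=-\Rho(s)\br(s)+\Tau(s)\br'(s)$ presents $(\Rho(s),\Tau(s))$ as the solution of a $2\times2$ system with continuous, nonsingular coefficient matrix $[\br(s)\,|\,\br'(s)]$, so Cramer's rule gives continuity of $\Rho$ and $\Tau$. For the first inequality I would apply Lemma~\ref{l:ab3} with $v:=\varphi(s)$: then $\br'(s)=\br(\varphi(s))=\br(v)$, and rewriting $\br'(v)=\br'(\varphi(s))=\Tau(s)\br'(s)-\Rho(s)\br(s)=\Tau(s)\br(v)+(-\Rho(s))\br(s)$ identifies $\alpha=\Tau(s)$ and $\beta=-\Rho(s)$, whence $|\Tau(s)|\le\frac{(C+c)C}{c^2}|\Rho(s)|$. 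Writing $K=\frac{(C+c)C}{c^2}$, the lower bound follows from $1=\|\br'(\varphi(s))\|\le|\Rho(s)|\,\|\br(s)\|+|\Tau(s)|\,\|\br'(s)\|=|\Rho(s)|+|\Tau(s)|\le(1+K)|\Rho(s)|$, since $\tfrac1{1+K}=\tfrac{c^2}{C^2+Cc+c^2}$.

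For (4), fix $s\in\ddot\Omega_\br$ and differentiate the identity $\br'=\br\circ\varphi$. With $w=\varphi(s)$ and $w_\e=\varphi(s+\e)$, the difference quotient of the left side tends to $\br''(s)$, while $\frac{\|\br(w_\e)-\br(w)\|}{|\e|}=\frac{\|\br(w_\e)-\br(w)\|}{|w_\e-w|}\cdot\frac{|w_\e-w|}{|\e|}$, where the first factor tends to $\|\br'(w)\|=1$ for a unit-speed $C^1$ curve. Hence $\frac{|\varphi(s+\e)-\varphi(s)|}{|\e|}\to\|\br''(s)\|$, and the monotonicity of $\varphi$ fixes the sign, so $\varphi'(s)$ exists and equals $\|\br''(s)\|$ (pieces on which $\varphi$ is locally constant, where both sides vanish, are trivial). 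With $\varphi'(s)$ available the chain rule gives $\br''(s)=\varphi'(s)\,\br'(\varphi(s))=-\varphi'(s)\Rho(s)\br(s)+\varphi'(s)\Tau(s)\br'(s)$; comparing with $\br''(s)=-\rho(s)\br(s)+\tau(s)\br'(s)$ in the basis $\{\br(s),\br'(s)\}$ yields $\rho=\Rho\,\varphi'$ and $\tau=\Tau\,\varphi'$, and $|\tau|\le K|\rho|$ then follows from (2) together with $\varphi'\ge0$.

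For (3), absolute smoothness makes $\br'$ locally absolutely continuous by Lemma~\ref{l:r}(7). Locally $\br$ is bi-Lipschitz, since $\|\br(s)-\br(s')\|\ge\tfrac12|s-s'|$ for $s,s'$ close (from $\|\br'\|\equiv1$ and continuity of $\br'$), so its local inverse is Lipschitz; as $\varphi=(\br)^{-1}\circ\br'$ locally and a Lipschitz map composed with a locally absolutely continuous map is locally absolutely continuous, $\varphi$ is locally absolutely continuous, and being monotone it makes $\br'\circ\varphi$ locally absolutely continuous as well (an absolutely continuous function precomposed with a monotone absolutely continuous function is absolutely continuous). By Cramer's rule $\Rho$ and $\Tau$ are sums, products and reciprocals of the locally absolutely continuous functions $\br,\br',\br'\circ\varphi$ with denominator $\det[\br\,|\,\br']$ bounded away from $0$ on compacta, hence locally absolutely continuous, and since $|\Rho|\ge\frac{c^2}{C^2+Cc+c^2}>0$ the same holds for $\frac{\Tau}{\Rho}$. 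I expect the two genuine difficulties to be the convex-geometric monotonicity in (1) and the one-point differentiation in (4), where the unit-speed hypothesis and the monotonicity must be combined to upgrade $|\varphi'|=\|\br''\|$ to the signed identity $\varphi'=\|\br''\|$.
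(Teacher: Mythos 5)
Your proposal is correct and follows the paper's own proof in all essentials: continuity of $\varphi$ from the covering/local-homeomorphism property of $\br$, monotonicity from convexity of $B_X$, Cramer's rule plus Lemma~\ref{l:ab3} (with $v=\varphi(s)$, $\alpha=\Tau(s)$, $\beta=-\Rho(s)$) for item (2), the same composition arguments for item (3), and the identity $\br''=(\br'\circ\varphi)\cdot\varphi'$ for item (4). The differences are ones of detail, and they work in your favor. Where the paper's item (4) simply writes $\br''(s)=\br'(\varphi(s))\cdot\varphi'(s)$ --- implicitly assuming that $\varphi$ is differentiable at every $s\in\ddot\Omega_{\br}$, which is not among the hypotheses --- you actually derive the existence of $\varphi'(s)$ and the identity $\varphi'(s)=\|\br''(s)\|$ from difference quotients, using $\|\br'\|\equiv 1$ to show the first factor in your factorization tends to $1$ and the monotonicity of $\varphi$ to fix the sign (with the degenerate case $\varphi(s+\e)=\varphi(s)$ handled separately); this fills a small but genuine gap in the printed argument. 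Likewise in item (3), where the paper says ``using Lemma~\ref{l:M}, it is possible to prove'' that local absolute continuity of $\br\circ\varphi$ implies that of $\varphi$, your local lower bound $\|\br(s)-\br(s')\|\ge\tfrac12|s-s'|$ makes the Lipschitz local inverse explicit, so that $\varphi$ is locally a Lipschitz function composed with a locally absolutely continuous one. Finally, your monotonicity argument in (1) (monotone turning of the tangent direction of a convex curve, read through the increasing reparameterization $\bt$) rests on a classical convexity fact, exactly as the paper's version rests on its own unproved convexity fact (a repeated tangent direction forces a flat arc); the two are at the same level of rigor, and your covering-space construction of the continuous lift with $\varphi(s)\in(s,s+2L)$ is the same machinery the paper uses in Lemma~\ref{l:iso}.
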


\begin{proof} 1. Taking into account that the function $\br'=\br\circ\varphi$ is continuous (by the $C^1$-smoothness of $X$) and the function $\br:\IR\to S_X$ is a local homeomorphism, we conclude that the function $\varphi$ is continuous. The monotonicity of $\varphi$ follows from the convexity of the unit ball $B_X$ (more precisely from the fact that for any real numbers $a<b$ with $\br([a,b])\ne S_X$ the equality $\br'(a)=\br'(b)$ implies that $\br'$ is constant on the inteval $[a,b]$).
\smallskip

2. By Cramer's formulas (resembling (\ref{Cramer})), the functions $\Rho$ and $\Tau$ can be expressed via the continuous functions $\br'\circ\varphi$, $\br$ and $\br'$ and hence are continuous. 

Since $\br'(s)=\br(\varphi(s))$ and $$\br'(\varphi(s))=-\Rho(s){\cdot}\br(s)+\Tau(s){\cdot}\br'(s)=\Tau(s){\cdot}\br(\varphi(s))-\Rho(s){\cdot}\br(s),$$
we can apply Lemma~\ref{l:ab3} to conclude that $|\Tau(s)|\le\frac{(C+c)C}{c^2}|\Rho(s)|$. Then
$$1=\|\br'(\varphi(s))\|\le|\Rho(s)|{\cdot}\|\br(s)\|+|\Tau(s)|{\cdot}\|\br'(s)\|\le|\Rho(s)|+\frac{(C+c)C}{c^2}|\Rho(s)|=\frac{C^2+Cc+c^2}{c^2}\cdot|\Rho(s)|,$$
which implies $|\Rho(s)|\ge\frac{c^2}{C^2+Cc+c^2}$. 
\smallskip

3. If $X$ is absolutely smooth, then by Lemma~\ref{l:r}, the function $\br'=\br\circ\varphi$ is locally absolutely continuous. Using Lemma~\ref{l:M}, it is possible to prove that the local absolute continuity of the function $\br\circ \varphi$ implies the local absolute continuity of the function $\varphi$. By the monotonicity and local absolute continuity of $\varphi$, the composition $\br'\circ \varphi$ is locally absolutely continuous. The functions $\Rho$ and $\Tau$ are locally absolutely continuous since they can be expressed by the Cramer formula via the locally absolutely continuous functions $\br'\circ\varphi$, $\br$ and $\br'$ (see also  Theorem~7.1.10 \cite{RA} for the preservation of absolutely continuous functions by algebraic operations). Since $|\Rho|\ge\frac{c^2}{C^2+Cc+c^2}>0$, the function $\frac{T}{P}$ is well-defined and is locally absolutely continuous.
\smallskip

4. For any $s\in\ddot\Omega_\br$ we get $\br'(s)=\br(\varphi(s))$ and hence $\br''(s)=\br'(\varphi(s))\cdot\varphi'(s)$ and $\|\br''(s)\|=\|\br'(\varphi(s)\|\cdot|\varphi'(s)|=\varphi'(s)$. The linear independence of the vectors $\br(s),\br'(s)$ and the equality
$$-\rho(s)\br(s)+\tau(s)\br'(s)=\br''(s)=\br'(\varphi(s))\cdot\varphi'(s)=\big({-}\Rho(s)\br(s)+\Tau(s)\br'(s)\big)\cdot\varphi'(s)$$imply the equalities $\rho(s)=\Rho(s)\varphi'(s)$ and $\tau(s)=\Tau(s)\cdot\varphi'(s)$. Then $\tau(s)=\Tau(s)\cdot\varphi'(s)=\frac{\Tau(s)}{\Rho(s)}\Rho(s)\cdot\varphi'(s)=\frac{\Tau(s)}{\Rho(s)}\rho(s)$ and hence $|\tau(s)|=\frac{|\Tau(s)|}{|\Rho(s)|}\cdot|\rho(s)|\le\frac{(C+c)C}{c^2}\cdot|\rho(s)|$.
\end{proof}

Lemma~\ref{l:RT} implies that for an (absolutely) smooth $2$-based Banach space $X$, the function $\psi=\frac{\Tau}{\Rho}$ is well-defined and (locally absolutely) continuous. Moreover, if $s\in\ddot\Omega_{\br}$ and $\rho(s)>0$, then $\frac{\tau(s)}{\rho(s)}=\psi(s)$. The function $\psi=\frac{\Tau}{\Rho}$ will be called the {\em quotient curvature} of the shere $S_X$. This function will play a crucial role is the calculations of the tangential curvature $\tau$ via measurements of distances on the sphere $S_X$. Since $\psi=\frac\tau\rho$, the equation~\ref{eq} can be rewritten as
$$\br''=-\rho\cdot\br+\rho\psi\cdot\br'.$$
In the following section we shall use this equation without special reference.

\section{Calculating the radial and tangential curvatures}\label{s:rt}

In this section we derive formulas for calculating the radial and tangential curvatures using measurement of distances on the sphere $S_X$ of an absolutely smooth 2-based Banach space $(X,\|\cdot\|)$.

From now on (and till the end of this section) we assume that $X$ is an absolutely smooth 2-based Banach space and $\br$ is the natural parameterization of $X$. The absolute smoothness of $X$ and Lemma~\ref{l:iso} ensure that the derivative $\br'$ is locally absolutely continuous. By Lemma~\ref{l:Lebesgue}, the locally absolutely continuous function $\br'$ is differentiable almost everywhere and the set $\Omega_{\br''}$ of Lebesgue points of the second derivative $\br''$ is of full measure in the real line. Moreover, any Lebesgue point $s$ of $\br''$ is a differentiability point of $\br'$, which implies that $\Omega_{\br''}\subseteq\ddot\Omega_{\br}$.

\begin{lemma}\label{l:Taylor} For any $s\in\Omega_{\br''}$ and any small $\e$ we have the asymptotic formula
$$\br(s+\e)=\br(s)+\br'(s)\e+\tfrac12\br''(s)\e^2+o(\e^2).$$
\end{lemma}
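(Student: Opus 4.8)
The plan is to establish a second-order Taylor expansion of $\br$ at a Lebesgue point of $\br''$, where the usual $C^2$ hypothesis is replaced by local absolute continuity of $\br'$. The essential point is that at a Lebesgue point $s\in\Omega_{\br''}$ the difference quotients of $\br'$ converge to $\br''(s)$ not merely pointwise but in an averaged sense that is strong enough to control the integral remainder. First I would use the Fundamental Theorem of Calculus for the locally absolutely continuous function $\br'$ (Theorem~7.1.5 in \cite{RA}, as invoked earlier) to write the exact first-order identity
\[
\br(s+\e)=\br(s)+\int_0^\e\br'(s+t)\,dt.
\]
Then I would integrate the analogous representation $\br'(s+t)=\br'(s)+\int_0^t\br''(s+r)\,dr$ once more, giving
\[
\br(s+\e)=\br(s)+\br'(s)\e+\int_0^\e\int_0^t\br''(s+r)\,dr\,dt.
\]

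Next I would split the double integral by inserting and subtracting the constant value $\br''(s)$. The term involving the constant integrates exactly to $\tfrac12\br''(s)\e^2$, so the whole claim reduces to showing that the remainder
\[
R(\e)=\int_0^\e\int_0^t\bigl(\br''(s+r)-\br''(s)\bigr)\,dr\,dt
\]
is $o(\e^2)$. Here is where the Lebesgue point hypothesis enters decisively: by definition of $s\in\Omega_{\br''}$ (applied coordinatewise to the two components of the vector-valued function $\br''$ via the biorthogonal functionals), we have
\[
\frac1\e\int_0^\e\|\br''(s+r)-\br''(s)\|\,dr\longrightarrow 0\quad\text{as }\e\to 0.
\]
Bounding the inner integral by its absolute value and enlarging $[0,t]$ to $[0,\e]$ gives $\|R(\e)\|\le\e\cdot\int_0^\e\|\br''(s+r)-\br''(s)\|\,dr$, which is exactly $\e^2$ times the quantity tending to zero above; hence $\|R(\e)\|=o(\e^2)$. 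A symmetric argument handles negative $\e$ by integrating over $[\e,0]$.

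The main obstacle is purely a bookkeeping one: the Lebesgue point condition in Lemma~\ref{l:Lebesgue} is stated for real-valued functions, whereas $\br''$ is $X$-valued, and the norm $\|\cdot\|$ is not the Euclidean norm underlying the definition of $\Omega_{\br''}$. I would resolve this by noting that $\Omega_{\br''}$ is defined (in the paragraph preceding the lemma) as the set of common Lebesgue points of the two coordinate functions $\be_1^*\circ\br''$ and $\be_2^*\circ\br''$, so for $s\in\Omega_{\br''}$ the averaged convergence holds in each coordinate; the equivalence of all norms on the finite-dimensional space $X$ then transfers this to $\|\cdot\|$, absorbing the comparison constants into the $o(\e^2)$ term. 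Everything else is routine Fubini-type manipulation of absolutely convergent integrals, so no further difficulty is expected.
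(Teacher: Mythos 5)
Your proof is correct and takes essentially the same route as the paper's: both recover $\br$ and $\br'$ from their derivatives using local absolute continuity, insert and subtract $\br''(s)$ in the double integral so that the constant part integrates to $\tfrac12\br''(s)\e^2$, and invoke the Lebesgue-point property to show the remainder $\int_0^\e\int_0^t\big(\br''(s+u)-\br''(s)\big)\,du\,dt$ is $o(\e^2)$. The only differences are cosmetic: the paper bounds the inner integral by $o(t)$ and integrates, whereas you enlarge the inner interval to $[0,\e]$, and you make explicit the coordinatewise/norm-equivalence bookkeeping for the vector-valued Lebesgue point that the paper leaves implicit.
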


\begin{proof} Taking into account that $s$ is a Lebesgue point of the function $\br''$, we conclude that 
$$
\Big|\int_0^\e\int_0^t\big(\br''(s+u)-\br''(s)\big)\,dudt\Big|\le\int_0^\e\int_0^t\big|\br''(s+u)-\br''(s)\big|\,dudt=\int_0^\e o(t)dt=o(\e^2).
$$

 Since the functions $\br$ and $\br'$ are absolutely continuous, they can be recovered from their derivatives and hence
 $$
\begin{aligned}
&\br(s+\e)-\br(s)=\int_0^\e\br'(s+t)\,dt=\br'(s)\e+\int_0^\e(\br'(s+t)-\br'(s))\,dt=\\
&=\br'(s)\e+\int_0^\e\int_0^t\br''(s+u)\,dudt=\br'(s)\e+\tfrac12\br''(s)\e^2+\int_0^\e\int_0^t\big(\br''(s{+}u)-\br''(s)\big)\,dudt=\\
&=\br'(s)\e+\tfrac12\br'(s)\e^2+o(\e^2).
\end{aligned}
$$
\end{proof}

The following lemma yields a formula for calculating the radial curvature.

\begin{lemma}\label{l:rho} For any $s\in\Omega_{\br''}$, we have the equality
$$\rho(s)=\lim_{\e\to 0}\frac{2-\|\br(s+\e)-\br(s+L-\e)\|}{\e^2}\ge0.$$
\end{lemma}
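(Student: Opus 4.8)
The plan is to exploit the symmetry relation $\br(s+L-\e)=-\br(s-\e+L)=\br(s-\e)$ hidden in the antipodal identity $\br(u+L)=-\br(u)$ from Lemma~\ref{l:r}(1), which converts the awkward expression $\br(s+\e)-\br(s+L-\e)$ into something symmetric about the point $\br(s)$. First I would observe that $\br(s+L-\e)=-\br(s-\e)$, so that
\begin{equation*}
\br(s+\e)-\br(s+L-\e)=\br(s+\e)+\br(s-\e).
\end{equation*}
Thus the quantity being measured is the norm of the sum of the two symmetric displacements from $\br(s)$, and the whole limit reduces to a second-order Taylor analysis of $\br$ at the point $s$.

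Next I would apply the asymptotic formula of Lemma~\ref{l:Taylor}, valid precisely because $s\in\Omega_{\br''}$, to both $\br(s+\e)$ and $\br(s-\e)$ (using $-\e$ in place of $\e$ in the second). The first-order terms $\br'(s)\e$ cancel, the zeroth-order terms add, and the second-order terms add, yielding
\begin{equation*}
\br(s+\e)+\br(s-\e)=2\br(s)+\br''(s)\e^2+o(\e^2).
\end{equation*}
Taking norms and using $\|\br(s)\|=1$ together with the Minkowski inequality in both directions, I would argue that $\|2\br(s)+\br''(s)\e^2+o(\e^2)\|=2\|\br(s)+\tfrac12\br''(s)\e^2+o(\e^2)\|$, and then expand. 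The substitution $\br''(s)=-\rho(s)\br(s)+\tau(s)\br'(s)$ gives
\begin{equation*}
\br(s)+\tfrac12\br''(s)\e^2=\big(1-\tfrac12\rho(s)\e^2\big)\br(s)+\tfrac12\tau(s)\e^2\br'(s)+o(\e^2).
\end{equation*}

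The key analytic step is to control the norm of this vector to order $\e^2$. Here the essential idea is that the $\br'(s)$-component contributes only at order $\e^4$ to the norm: since $\br(s)\in S_X$ and $\br'(s)$ is tangent to $S_X$ at $\br(s)$ (the supporting hyperplane at $\br(s)$ in a smooth space contains the direction $\br'(s)$), perturbing $\br(s)$ in the tangent direction $\br'(s)$ by an amount of order $\e^2$ changes the norm only by $o(\e^2)$. More precisely, writing $x^*$ for the supporting functional at $\br(s)$, one has $x^*(\br(s))=1$ and $x^*(\br'(s))=0$, so $x^*$ detects the radial contraction $1-\tfrac12\rho(s)\e^2$ but is blind to the tangential term; combined with the upper bound $\|\cdot\|\le 1$ of the unit-sphere point this pins the norm to $1-\tfrac12\rho(s)\e^2+o(\e^2)$. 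Hence
\begin{equation*}
\|\br(s+\e)+\br(s-\e)\|=2\big(1-\tfrac12\rho(s)\e^2\big)+o(\e^2)=2-\rho(s)\e^2+o(\e^2),
\end{equation*}
from which $\dfrac{2-\|\br(s+\e)-\br(s+L-\e)\|}{\e^2}\to\rho(s)$ follows immediately. The non-negativity $\rho(s)\ge 0$ is then automatic from the sign convention in \eqref{eq}, but also follows here because $\|\br(s+\e)+\br(s-\e)\|\le 2$ by convexity, forcing the limit to be non-negative.

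The step I expect to be the main obstacle is the rigorous justification that the tangential component $\tfrac12\tau(s)\e^2\br'(s)$ does not affect the norm at order $\e^2$. Intuitively this is the statement that the unit sphere is ``flat to first order'' in the tangent direction, but turning the smoothness of $X$ (existence of a unique supporting functional $x^*$ at $\br(s)$ with $x^*\circ\br'=0$ at the relevant point) into a clean $o(\e^2)$ estimate requires care, particularly since we only know $\br'$ is locally absolutely continuous rather than $C^1$. I would handle this by bracketing the norm between the lower bound coming from evaluating the supporting functional $x^*$ and the trivial upper bound $\|\br(s+\e)\|=\|\br(s-\e)\|=1$ (so $\|\br(s+\e)+\br(s-\e)\|\le 2$), and then checking that both bounds agree to order $\e^2$; the absolute-continuity hypotheses enter only through Lemma~\ref{l:Taylor}, which has already absorbed them.
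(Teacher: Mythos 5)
Your proof follows the paper's skeleton exactly up to the key analytic step: the antipodal identity $\br(s+L-\e)=-\br(s-\e)$, the expansion $\br(s+\e)+\br(s-\e)=2\br(s)+\br''(s)\e^2+o(\e^2)$ from Lemma~\ref{l:Taylor}, and the substitution $\br''(s)=-\rho(s)\br(s)+\tau(s)\br'(s)$ are all as in the paper. The genuine gap is in how you control the norm. Your supporting functional $x^*$ delivers only one of the two inequalities you need: from $\|v\|\ge x^*(v)$ and $x^*(\br'(s))=0$ you get $\|\br(s+\e)+\br(s-\e)\|\ge 2-\rho(s)\e^2+o(\e^2)$, which yields $\limsup_{\e\to0}\e^{-2}\big(2-\|\br(s+\e)+\br(s-\e)\|\big)\le\rho(s)$. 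The complementary bound you invoke, $\|\br(s+\e)+\br(s-\e)\|\le 2$, yields only $\liminf\ge0$. These two brackets do \emph{not} ``agree to order $\e^2$'': they differ by exactly $\rho(s)\e^2$, the very quantity being computed, so whenever $\rho(s)>0$ your argument pins the limit (if it even exists) only to the interval $[0,\rho(s)]$ and establishes neither its existence nor its value. What is missing is the reverse estimate $\|\br(s+\e)+\br(s-\e)\|\le 2-\rho(s)\e^2+o(\e^2)$, i.e.\ a proof that the tangential term $\tau(s)\e^2\br'(s)$ cannot \emph{increase} the norm by more than $o(\e^2)$; a supporting functional, which always bounds a norm from below, can never supply an upper bound of this kind.

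The paper closes this step without any supporting functional: after factoring out $(2-\rho(s)\e^2)$, the remaining vector is $\br(s)+\tfrac12\tau(s)\e^2\br'(s)+o(\e^2)=\br\big(s+\tfrac12\tau(s)\e^2\big)+o(\e^2)$ by the differentiability of $\br$ at $s$, and $\br\big(s+\tfrac12\tau(s)\e^2\big)$ lies exactly on $S_X$, so its norm is $1$; hence $\|\br(s+\e)+\br(s-\e)\|=(2-\rho(s)\e^2)\big(1+o(\e^2)\big)$, a two-sided estimate in one stroke. Alternatively, your own ``essential idea'' sentence can be made rigorous and would also work: the function $g(t)=\|\br(s)+t\br'(s)\|$ is convex, and by smoothness of $X$ (which absolute smoothness implies) it is differentiable at $0$ with $g'(0)=x^*(\br'(s))=0$, whence $g(t)=1+o(t)$ as $t\to0$; applying this with $t=\tau(s)\e^2/(2-\rho(s)\e^2)=O(\e^2)$ gives precisely the missing upper bound. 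Either repair works, but this tangency/absorption argument is the heart of the lemma and cannot be replaced by the bracketing you propose.
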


\begin{proof} Lemma~\ref{l:Taylor} implies
$$\br(s+\e)+\br(s-\e)=2\br(s)+\br''(s)\e^2+o(\e^2).$$
Taking into account that $\br(s+L)=-\br(s)$, we can observe that
$$
\begin{aligned}
&\br(s+\e)-\br(s+L-\e)=\br(s+\e)+\br(s-\e)=2\br(s)+\br''(s)\e^2+o(\e^2)=\\
&2\br(s)+(-\rho(s)\br (s)+\tau(s)\br'(s))\e^2+o(\e^2)=(2-\rho(s)\e^2)\br(s)+\tau(s)\e^2\br'(s)+o(\e^2)=\\
&(2-\rho(s)\e^2)\big(\br(s)+\frac{\tau(s)\e^2}{2-\rho(s)\e^2}\br'(s)+o(\e^2)\big)=
(2-\rho(s)\e^2)\big(\br(s)+\tfrac12{\tau(s)\e^2}\br'(s)+o(\e^2)\big)=\\
&(2-\rho(s)\e^2)\big(\br(s+\tfrac12\tau(s)\e^2)+o(\e^2)\big). 
\end{aligned}
$$
Then
$$\|\br(s+\e)-\br(s+L-\e)\|=(2-\rho(s)\e^2)\|\br(s-\tfrac12\tau(s)\e^2)+o(\e^2)\|=2-\rho(s)\e^2+o(\e^2)$$and hence
$$\rho(s)=\lim_{\e\to 0}\frac{2-\|\br(s+\e)-\br(s+L-\e)\|}{\e^2}\ge0.$$
\end{proof}

Calculating the tangential curvature is much more tricky and exploits the following series of  lemmas.

 Since $\tau=\rho\cdot \psi$, it suffices to find the function $\psi$. By Lemma~\ref{l:RT}, the quotient curvature $\psi=\frac{\Tau}{\Rho}$ is locally absolutely continuous and hence $\psi$ is differentiable almost everywhere and the derivative $\psi'$ is locally integrable. By Lemma~\ref{l:Lebesgue}, the set $\dot\Omega_{\psi}$ of differentiability points of the function $\psi$ has full measure in the real line.
  

First we derive formulas for calculating the function $\psi'$. Fix any point $s\in\IR$.

Since the function $\br''$ is locally integrable and twice differentiable almost everywhere, the radial curvature $\rho$ is defined almost everywhere and is locally integrable. By Lemma~\ref{l:rho}, the function $\rho$ is non-negative. This facts allow us to consider the following four  functions of a real parameter $\e$:
$$
\begin{aligned}
&I_s(\e)=\int_0^\e\rho(s+u)\,du,
&&\II_s(\e)=\int_0^\e I_s(t)\,dt=\int_0^\e\int_0^t\rho(s+u)\,du\,dt\\
&J_s(\e)=\int_0^\e\rho(s+u)u\,du,
&&\JJ_s(\e)=\int_0^\e J_s(t)\,dt=\int_0^\e\int_0^t\rho(s+u)u\,du\,dt.
\end{aligned}
$$
Observe that
$$\e I_s(\e)=\int_0^\e(u I_s(u))'\,du=\int_0^\e(I_s(u)+u\rho(s+u))du=\II_s(\e)+J_s(\e).$$
It follows that $$|\II_s(\e)|\le|\e I_s(\e)|=o(I_s(\e))\quad\mbox{and}\quad |\JJ_s(\e)|\le|\e J_s(\e)|=o(J_s(\e)).$$

 By the formulas (\ref{Cramer}), the local integrability of the function $\br''$ implies the local integrability of the function $\rho$. By Lemma~\ref{l:Lebesgue}, the set $\Omega_{\rho}$ of Lebesgue points of the function $\rho$ has full measure in the real line.

By analogy with Lemma~\ref{l:Taylor} we can prove the following asymptotic formulas for the functions $I_s$, $J_s$, $\II_s$, $\JJ_s$.

\begin{lemma}\label{l:IJ} If $s$ is a Lebesgue point of the function $\rho$, then
$$
\begin{aligned}
&I_s(\e)=\rho(s)\e+o(\e),&&\II_s(\e)=\tfrac12\rho(s)\e^2+o(\e^2),\\
&J_s(\e)=\tfrac12\rho(s)\e^2+o(\e^2), &&\JJ_s(\e)=\tfrac16\rho(s)\e^3+o(\e^3).
\end{aligned}
$$
\end{lemma}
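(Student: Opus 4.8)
The plan is to isolate the constant value $\rho(s)$ inside each integral and to show that the resulting deviation integrals are of the claimed lower order, using only the defining property of a Lebesgue point, namely that $\int_0^\e|\rho(s+u)-\rho(s)|\,du=o(\e)$ as $\e\to 0$ (from both sides). This is the same mechanism underlying Lemma~\ref{l:Taylor}, and I expect the whole computation to be routine once the single integrals are handled.

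First I would treat the single integrals $I_s$ and $J_s$ directly. Writing $\rho(s+u)=\rho(s)+\big(\rho(s+u)-\rho(s)\big)$ gives
$$I_s(\e)=\rho(s)\,\e+\int_0^\e\big(\rho(s+u)-\rho(s)\big)\,du,$$
where the remainder is bounded in absolute value by $\int_0^\e|\rho(s+u)-\rho(s)|\,du=o(\e)$, so $I_s(\e)=\rho(s)\e+o(\e)$. For $J_s$ the same splitting produces the main term $\rho(s)\int_0^\e u\,du=\tfrac12\rho(s)\e^2$, while the remainder $\int_0^\e\big(\rho(s+u)-\rho(s)\big)u\,du$ is controlled by $|\e|\int_0^\e|\rho(s+u)-\rho(s)|\,du=|\e|\cdot o(\e)=o(\e^2)$, giving $J_s(\e)=\tfrac12\rho(s)\e^2+o(\e^2)$.

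For the iterated integrals $\II_s$ and $\JJ_s$ I would avoid redoing the estimates from scratch and instead use the integration-by-parts identity recorded just above the lemma, $\e I_s(\e)=\II_s(\e)+J_s(\e)$. This yields $\II_s(\e)=\e I_s(\e)-J_s(\e)=\rho(s)\e^2-\tfrac12\rho(s)\e^2+o(\e^2)=\tfrac12\rho(s)\e^2+o(\e^2)$. An entirely analogous identity, obtained by differentiating $u\,J_s(u)$, reads $\e J_s(\e)=\JJ_s(\e)+K_s(\e)$ with $K_s(\e)=\int_0^\e\rho(s+u)u^2\,du$. The auxiliary integral satisfies $K_s(\e)=\tfrac13\rho(s)\e^3+o(\e^3)$ by exactly the splitting used for $J_s$ (the remainder now being bounded by $\e^2\cdot o(\e)$), so $\JJ_s(\e)=\e J_s(\e)-K_s(\e)=\tfrac12\rho(s)\e^3-\tfrac13\rho(s)\e^3+o(\e^3)=\tfrac16\rho(s)\e^3+o(\e^3)$.

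The only genuinely delicate point arises if one insists on the direct route to $\II_s$ without the identity: there the remainder is the double integral $\int_0^\e\big(\int_0^t(\rho(s+u)-\rho(s))\,du\big)\,dt$, and a pointwise inner bound of order $o(t)$ does not obviously integrate to $o(\e^2)$. This is resolved by observing that the cumulative absolute-deviation function $g(t)=\int_0^t|\rho(s+u)-\rho(s)|\,du$ is monotone, whence $\int_0^\e g(t)\,dt\le \e\,g(\e)=\e\cdot o(\e)=o(\e^2)$. I therefore anticipate that the main (minor) obstacle is bookkeeping: tracking signs and ensuring the Lebesgue-point estimate is applied two-sidedly for $\e\to 0^-$ as well as $\e\to 0^+$. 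Routing $\II_s$ and $\JJ_s$ through the integration-by-parts identities sidesteps the monotonicity argument entirely and keeps every step elementary.
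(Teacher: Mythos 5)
Your proof is correct. The paper itself gives no written argument for this lemma: it simply says the four formulas follow ``by analogy with Lemma~\ref{l:Taylor}'', i.e.\ by the same direct mechanism you use for $I_s$, $J_s$ (and your auxiliary $K_s$) --- split off the constant $\rho(s)$ and bound the deviation integral by the Lebesgue-point property. Where you genuinely diverge is in the treatment of the iterated integrals: the paper's intended route estimates the double integrals head-on, as in Lemma~\ref{l:Taylor}, via $\bigl|\int_0^\e\int_0^t(\rho(s+u)-\rho(s))\,du\,dt\bigr|\le\int_0^\e o(t)\,dt=o(\e^2)$ (and likewise $\int_0^\e o(t^2)\,dt=o(\e^3)$ for $\JJ_s$), whereas you reduce $\II_s$ and $\JJ_s$ algebraically to the single integrals through the identities $\e I_s(\e)=\II_s(\e)+J_s(\e)$ (which the paper records just above the lemma, though it uses it only to get $\II_s(\e)=o(I_s(\e))$) and your new analogue $\e J_s(\e)=\JJ_s(\e)+K_s(\e)$. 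Your route buys a proof in which every estimate is on a single integral, at the cost of introducing $K_s$; the paper's route needs no extra functions but requires the (standard) fact that a fixed function which is $o(t)$ integrates to $o(\e^2)$. On that point you are overcautious: since the inner bound $g(t)=\int_0^t|\rho(s+u)-\rho(s)|\,du$ is one fixed function of $t$, not depending on $\e$, the implication $g(t)=o(t)\Rightarrow\int_0^\e g(t)\,dt=o(\e^2)$ is immediate (given $\delta>0$, take $\eta$ with $g(t)\le\delta|t|$ for $|t|\le\eta$), and this is exactly the step the paper performs inside Lemma~\ref{l:Taylor}; your monotonicity workaround $\int_0^\e g\le\e\,g(\e)$ is also valid, just unnecessary. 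Your attention to the two-sided use of the Lebesgue-point condition (the lemma is applied for $\e\to0$ of either sign, e.g.\ with $\II_s(-\e)$, $\JJ_s(\e_-)$ later in the paper) is a worthwhile bookkeeping point that the paper leaves implicit.
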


The following lemma yields an asymptotic formula for the difference $\br'(s+\e)-\br'(s)$.

\begin{lemma}\label{l:r'} If the functions $\br'$ and $\psi$ are differentiable at $s$, then for a small $\e$ the following asymptotic equality holds:
$$
\begin{aligned}
\br'(s+\e)-\br'(s)&=-\br(s)\big(I_s(\e)+\psi(s)\rho(s)J_s(\e)+o(J_s(\e))\big)+\\
&\quad+\br'(s)\big(\psi(s)I_s(\e)+(\psi'(s)-1+\psi(s)^2\rho(s))J_s(\e)+o(J_s(\e))\big).
\end{aligned}
$$
\end{lemma}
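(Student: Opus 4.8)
The plan is to start from the differential equation $\br''=-\rho\cdot\br+\rho\psi\cdot\br'$ recorded at the end of Section~\ref{s:RT}. Since $X$ is absolutely smooth, Lemma~\ref{l:r}(7) gives that $\br'$ is locally absolutely continuous, so it is recovered from $\br''$ by integration, and substituting the differential equation yields
$$\br'(s+\e)-\br'(s)=\int_0^\e\br''(s+u)\,du=-\int_0^\e\rho(s+u)\br(s+u)\,du+\int_0^\e\rho(s+u)\psi(s+u)\br'(s+u)\,du.$$
I would then treat the two integrals separately, expanding each integrand to first order in $u$ and recognising the weighted integrals $I_s(\e)=\int_0^\e\rho(s+u)\,du$ and $J_s(\e)=\int_0^\e\rho(s+u)u\,du$ in the principal terms.

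For the first integral I would insert the Taylor expansion $\br(s+u)=\br(s)+\br'(s)u+\tfrac12\br''(s)u^2+o(u^2)$, which follows by integrating $\br'(s+t)=\br'(s)+\br''(s)t+o(t)$ (differentiability of $\br'$ at $s$), exactly as in Lemma~\ref{l:Taylor}. Integrating against the weight $\rho(s+u)$ gives
$$-\int_0^\e\rho(s+u)\br(s+u)\,du=-\br(s)I_s(\e)-\br'(s)J_s(\e)-\tfrac12\br''(s)\int_0^\e\rho(s+u)u^2\,du-\int_0^\e\rho(s+u)\,o(u^2)\,du.$$
The decisive observation is that the last two terms are $o(J_s(\e))$: since $\rho\ge0$ by Lemma~\ref{l:rho} and $u^2\le\e u$ on $[0,\e]$, one has $\int_0^\e\rho(s+u)u^2\,du\le\e\,J_s(\e)=o(J_s(\e))$, and the same estimate dominates the $o(u^2)$ remainder. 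Thus the first integral equals $-\br(s)I_s(\e)-\br'(s)J_s(\e)+o(J_s(\e))$, already supplying the $-I_s(\e)$ in the $\br(s)$-coefficient and the $-1$ in the $J_s(\e)$-part of the $\br'(s)$-coefficient.

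For the second integral I would expand $\psi(s+u)=\psi(s)+\psi'(s)u+o(u)$ and $\br'(s+u)=\br'(s)+\br''(s)u+o(u)$ (using differentiability of $\psi$ and of $\br'$ at $s$), multiply out, and retain terms through order $u$, obtaining
$$\int_0^\e\rho(s+u)\psi(s+u)\br'(s+u)\,du=\psi(s)\br'(s)I_s(\e)+\big(\psi(s)\br''(s)+\psi'(s)\br'(s)\big)J_s(\e)+o(J_s(\e)),$$
where the remainder is controlled as before via $\rho\ge0$ and $\int_0^\e\rho(s+u)\,o(u)\,du=o(J_s(\e))$. Finally I would substitute $\br''(s)=-\rho(s)\br(s)+\rho(s)\psi(s)\br'(s)$ into the coefficient of $J_s(\e)$, add the two integrals, and collect the $\br(s)$ and $\br'(s)$ components; a direct computation then yields coefficient $-\big(I_s(\e)+\psi(s)\rho(s)J_s(\e)\big)$ of $\br(s)$ and coefficient $\psi(s)I_s(\e)+\big(\psi'(s)-1+\psi(s)^2\rho(s)\big)J_s(\e)$ of $\br'(s)$, which is precisely the asserted formula.

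The main obstacle, and the only genuinely delicate point, is the error bookkeeping: one must check that every remainder integrates to $o(J_s(\e))$ rather than merely $o(\e^2)$, because $J_s(\e)$ can degenerate where $\rho(s)=0$. This is exactly where the nonnegativity of $\rho$ from Lemma~\ref{l:rho} is indispensable, since it allows the weighted quadratic remainder and the $\rho$-weighted $o(u)$ terms to be compared directly with $J_s(\e)$; once the differentiability hypotheses on $\br'$ and $\psi$ are used for the first-order expansions, everything else reduces to routine term collection.
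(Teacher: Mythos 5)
Your proposal is correct and follows essentially the same route as the paper: integrate $\br''=-\rho\br+\rho\psi\br'$ over $[s,s+\e]$, Taylor-expand $\br$, $\br'$ and $\psi$ at $s$, recognise $I_s(\e)$ and $J_s(\e)$, control all remainders by $o(J_s(\e))$ using $\rho\ge0$, and substitute the differential equation back into the coefficient of $J_s(\e)$. The only cosmetic difference is that you expand $\br(s+u)$ to second order where the paper stops at first order, and you make the error bookkeeping (e.g. $\int_0^\e\rho(s+u)u^2\,du\le\e J_s(\e)$) explicit where the paper leaves it implicit.
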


\begin{proof} Since $X$ is absolutely smooth, the derivative $\br'$ is locally absolutely continuous and hence 
$$
\begin{aligned}
&\br'(s+\e)-\br'(s)=\int_0^\e\br''(s+t)\,dt=-\int_0^\e\rho(s+t)\br(s+t)\,dt+\int_0^\e\rho(s+t)\psi(s+t)\br'(s+t)\,dt=\\
&=-\int_0^\e\rho(s{+}t)(\br(s){+}\br'(s)t{+}o(t))dt+\int_0^\e\rho(s{+}t)(\psi(s){+}\psi'(s)t{+}o(t))(\br'(s){+}\br''(s)t{+}o(t))\,dt=\\
&=-\br(s)I_s(\e)-\br'(s)J_s(\e)+o(J_s(\e))+\psi(s)\br'(s)I_s(\e)+(\psi(s)\br''(s){+}\psi'(s)\br'(s))J_s(\e)+o(J_s(\e))=\\
&=-\br(s)\big(I_s(\e){+}\psi(s)\rho(s)J_s(\e){+}o(J_s(\e))\big)+\br'(s)\big(\psi(s)I_s(\e){+}(\psi'(s){-}1{+}\psi(s)^2\rho(s))J_s(\e){+}o(J_s(\e))\big).\\
\end{aligned}
$$
\end{proof}

By analogy we obtain an asymptotic formula for the difference $\br(s+\e)-\br(s)$.

\begin{lemma}\label{cl:r} If the functions $\br'$ and $\psi$ are differentiable at $s$, then for  a small $\e$ the following asymptotic equality holds: 
$$
\begin{aligned}
\br(s+\e)-\br(s)&=-\br(s)\big(\II_s(\e)+\psi(s)\rho(s)\JJ_s(\e)+o(\JJ_s(\e))\big)+\\
&\quad+\br'(s)\big(\e+\psi(s)\II_s(\e)+(\psi'(s)-1+\psi(s)^2\rho(s))\JJ_s(\e)+o(\JJ_s(\e))\big).\\
\end{aligned}
$$
\end{lemma}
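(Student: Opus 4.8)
The plan is to obtain this formula by integrating the asymptotic expansion of $\br'(s+\e)-\br'(s)$ furnished by Lemma~\ref{l:r'}, which is exactly what the phrase ``by analogy'' is pointing at. Since $X$ is (absolutely, hence) $C^1$-smooth, the map $\br$ is continuously differentiable, so the Fundamental Theorem of Calculus gives
$$\br(s+\e)-\br(s)=\int_0^\e\br'(s+t)\,dt=\br'(s)\e+\int_0^\e\big(\br'(s+t)-\br'(s)\big)\,dt.$$
This already isolates the leading term $\br'(s)\e$ appearing in the statement; everything else must come from integrating the remainder.

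Next I would substitute into the last integral the expansion of $\br'(s+t)-\br'(s)$ from Lemma~\ref{l:r'}, with the running variable $t$ in place of $\e$, namely
$$\br'(s+t)-\br'(s)=-\br(s)\big(I_s(t)+\psi(s)\rho(s)J_s(t)+o(J_s(t))\big)+\br'(s)\big(\psi(s)I_s(t)+(\psi'(s)-1+\psi(s)^2\rho(s))J_s(t)+o(J_s(t))\big),$$
and then integrate term by term over $[0,\e]$. Invoking the very definitions $\II_s(\e)=\int_0^\e I_s(t)\,dt$ and $\JJ_s(\e)=\int_0^\e J_s(t)\,dt$ turns $\int_0^\e I_s(t)\,dt$ into $\II_s(\e)$ and $\int_0^\e J_s(t)\,dt$ into $\JJ_s(\e)$. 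Collecting the coefficients of $\br(s)$ and $\br'(s)$ and restoring the term $\br'(s)\e$ then reproduces the claimed formula verbatim, \emph{provided} the integrated error terms are again $o(\JJ_s(\e))$.

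The one point that requires care — and the only genuine obstacle — is to show that integration preserves the order of smallness, i.e.\ that $\int_0^\e o(J_s(t))\,dt=o(\JJ_s(\e))$. Here I would exploit that $\rho\ge0$ (Lemma~\ref{l:rho}), which forces $J_s(t)=\int_0^t\rho(s+u)\,u\,du\ge0$ for every real $t$, of either sign; consequently $|\JJ_s(\e)|=\int_Q J_s(t)\,dt$, where $Q$ denotes the interval with endpoints $0$ and $\e$. Writing the vector remainder as $g(t)$ with $\|g(t)\|=o(J_s(t))$ (which holds since $\|\br(s)\|=\|\br'(s)\|=1$, so the two scalar remainders combine into one), for each $\delta>0$ there is $\eta>0$ with $\|g(t)\|\le\delta\,J_s(t)$ whenever $|t|<\eta$; then for $|\e|<\eta$,
$$\Big\|\int_0^\e g(t)\,dt\Big\|\le\int_Q\|g(t)\|\,dt\le\delta\int_Q J_s(t)\,dt=\delta\,|\JJ_s(\e)|,$$
which is precisely $o(\JJ_s(\e))$. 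With this estimate in hand the term-by-term integration is fully justified, and the stated asymptotic formula follows.
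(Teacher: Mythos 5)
Your proof is correct, but it takes a different route from the paper's. The paper does not integrate Lemma~\ref{l:r'}: it repeats the analogous computation one level up, starting from $\br(s+\e)-\br(s)=\br'(s)\,\e+\int_0^\e\int_0^t\br''(s+u)\,du\,dt$, substituting $\br''=-\rho\,\br+\rho\psi\,\br'$, Taylor-expanding $\br(s+u)$, $\br'(s+u)$, $\psi(s+u)$ inside the double integral, recognizing $\II_s(\e)$ and $\JJ_s(\e)$ directly from their definitions as iterated integrals, and finally rewriting $\psi(s)\br''(s)$ via $\br''(s)=-\rho(s)\br(s)+\rho(s)\psi(s)\br'(s)$ to land in the basis $\br(s),\br'(s)$. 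In that arrangement the remainders never leave the explicit form $\int_0^\e\int_0^t\rho(s+u)\,o(u)\,du\,dt$, so their estimate by $\delta\,|\JJ_s(\e)|$ is immediate. You instead use Lemma~\ref{l:r'} as a black box and integrate its conclusion, which is shorter but creates exactly one new obligation, the interchange $\int_0^\e o(J_s(t))\,dt=o(\JJ_s(\e))$, and you discharge it correctly: your observation that $\rho\ge0$ forces $J_s(t)\ge0$ for $t$ of either sign, whence $|\JJ_s(\e)|=\int_Q J_s(t)\,dt$ and the $\delta$-bound survives integration, is precisely the monotonicity fact that the paper's inlined version uses only implicitly. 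So your approach buys modularity (no recomputation of the expansion, and the analytic point is isolated and proved once), while the paper's buys built-in error control at the cost of redoing the Taylor computation. One cosmetic gap: having integrated the combined vector remainder $g$, you should pass from $\big\|\int_0^\e g(t)\,dt\big\|=o(\JJ_s(\e))$ back to the two \emph{scalar} error coefficients appearing in the statement, which uses the boundedness of the coordinate functionals of the fixed basis $\br(s),\br'(s)$ --- or you can avoid this entirely by integrating the two scalar remainders of Lemma~\ref{l:r'} separately, with the same estimate.
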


\begin{proof} Observe that
$$
\begin{aligned}
&\br(s+\e)-\br(s)=\int_{0}^{\e}\br'(s+t)\,dt=\br'(s)\cdot\e+\int_{0}^{\e}\big(\br'(s+t)-\br'(s)\big)\,dt=\\
&=\br'(s)\cdot\e+\int_{0}^\e\int_0^t\br''(s+u)\,du\,dt=\\
&=\br'(s)\,\e+\int_0^\e\int_0^t(-\rho(s+u)\br(s+u)+\rho(s+u)\psi(s+u)\br'(s+u))\,du\,dt=\\
&=\br'(s)\,\e+\int_0^\e\int_0^t(-\rho(s+u)\big(\br(s)+\br'(s)u+o(u)\big)\,du\,dt+\\
&\quad+\int_0^\e\int_0^t\rho(s+u)\big(\psi(s)+\psi'(s)u+o(u)\big)\big(\br'(s)+\br''(s)u+o(u)\big)\,du\,dt=\\
&=\br'(s)\,\e-\br(s)\II_s(\e)-\br'(s)\JJ_s(\e)+o(\JJ_s(\e))+\\
&\quad+\psi(s)\br'(s)\II_s(\e)+(\psi(s)\br''(s)+\psi'(s)\br'(s))\JJ_s(\e)+o(\JJ_s(\e))=\\
&=-\br(s)\big(\II_s(\e)+\psi(s)\rho(s)\JJ_s(\e)+o(\JJ_s(\e))\big)+\\
&\quad+\br'(s)\big(\e+\psi(s)\II_s(\e)+(\psi'(s)-1+\psi(s)^2\rho(s))\JJ_s(\e)+o(\JJ_s(\e))\big).\\
\end{aligned}
$$
\end{proof}

\begin{lemma}\label{l:rho-positive} Assume  $s$ is a Lebesgue point of the function $\rho$ and the functions $\br'$ and $\psi$ are differentiable at $s$. If $\rho(s)>0$, then for small $\e$ we have the asymptotic equality
$$\|\br(s+\e)-\br(s-\e)\|=2\e+(\psi'(s)-1)\rho(s)\e^3+o(\e^3)
$$
implying
$$\psi'(s)=1+\lim_{\e\to 0}\frac{\|\br(s+\e)-\br(s-\e)\|-2\e}{\rho(s)\e^3}.$$
\end{lemma}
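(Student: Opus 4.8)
The plan is to feed the two second-order expansions already available into the chord $\br(s+\e)-\br(s-\e)$ and then to read off its length using the smoothness of $X$. First I would apply Lemma~\ref{cl:r} twice, once at $\e$ and once at $-\e$, and subtract. Since $s$ is a Lebesgue point of $\rho$, Lemma~\ref{l:IJ} gives $\II_s(\e)=\tfrac12\rho(s)\e^2+o(\e^2)$ (even to leading order) and $\JJ_s(\e)=\tfrac16\rho(s)\e^3+o(\e^3)$ (odd to leading order). Hence the even, radial contributions to $\br(s\pm\e)-\br(s)$ cancel in the difference, while the tangential ones reinforce, and one is left with an expression of the form
$$\br(s+\e)-\br(s-\e)=A(\e)\,\br(s)+B(\e)\,\br'(s),$$
where $B(\e)=2\e+O(\e^3)$ carries the tangential part and $A(\e)=O(\e^3)$ is a purely cubic radial correction. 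This is precisely where the hypothesis $\rho(s)>0$ enters: it guarantees $\JJ_s(\e)\asymp\e^3$, so that every $o(\JJ_s(\e))$ error is genuinely $o(\e^3)$ and the cubic coefficients are meaningful.

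Next I would compute the norm by factoring
$$\big\|A(\e)\br(s)+B(\e)\br'(s)\big\|=B(\e)\,\big\|\br'(s)+\tfrac{A(\e)}{B(\e)}\br(s)\big\|,\qquad \tfrac{A(\e)}{B(\e)}=O(\e^2).$$
Because $X$ is smooth (absolute smoothness implies smoothness, cf. Lemma~\ref{l:r}(8) and the diagram after Definition~\ref{d:ss}), its norm is Gateaux differentiable at the boundary point $\br'(s)=\br(\varphi(s))\in S_X$, with derivative given by the unique supporting functional $x^*$ there; thus $\|\br'(s)+\lambda\br(s)\|=1+\lambda\,x^*(\br(s))+o(\lambda)$ as $\lambda\to0$. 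To evaluate $x^*(\br(s))$ I would use $x^*(\br(\varphi(s)))=1$ and $x^*(\br'(\varphi(s)))=0$ together with the defining relation $\br'(\varphi(s))=-\Rho(s)\br(s)+\Tau(s)\br'(s)$ from Section~\ref{s:RT}; solving for $\br(s)$ in the basis $\{\br(\varphi(s)),\br'(\varphi(s))\}$ yields $x^*(\br(s))=\tfrac{\Tau(s)}{\Rho(s)}=\psi(s)$. (Alternatively one can mimic Lemma~\ref{l:rho} and write $\br'(s)+\lambda\br(s)=\mu\,\br(\varphi(s)+\delta)+o(\e^2)$ for suitable $\delta=O(\e^2)$ and $\mu=1+\psi(s)\lambda+O(\e^4)$, so that the length equals $\mu+o(\e^2)$.)

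Substituting $\lambda=\tfrac{A(\e)}{B(\e)}$ and multiplying back by $B(\e)$ then produces an expansion
$$\|\br(s+\e)-\br(s-\e)\|=2\e+c\,(\psi'(s)-1)\rho(s)\e^3+o(\e^3)$$
for a universal constant $c$: the cross term $B(\e)\cdot\lambda\,\psi(s)$ contributes at order $\e^3$ and cancels the $\psi(s)^2\rho(s)^2$ part of the cubic tangential coefficient from Lemma~\ref{cl:r}, leaving only the $(\psi'(s)-1)\rho(s)$ term. Dividing by $\rho(s)\e^3$ and passing to the limit then isolates $\psi'(s)$ in the displayed form.

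The hard part will be the bookkeeping of the cubic terms in the norm step: both the small radial correction $A(\e)=O(\e^3)$ and the quadratic shift $\lambda=O(\e^2)$ of the tangent point feed into the $\e^3$ coefficient, so the final constant is the outcome of a delicate cancellation rather than of any single term. To guard against sign and factor errors I would check the computation against the Euclidean model of Example~\ref{ex:H}, where $\br(s)=e^{is}$, $\rho\equiv1$, $\psi\equiv0$, and
$$\|\br(s+\e)-\br(s-\e)\|=2|\sin\e|=2\e-\tfrac13\e^3+o(\e^3);$$
matching this against the general expansion pins down the value of $c$ and confirms the resulting formula for $\psi'(s)$.
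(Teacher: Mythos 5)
Your first half (expanding the chord by Lemma~\ref{cl:r} at $\pm\e$ and aiming at the cancellation of the $\psi(s)^2\rho(s)^2$-terms) runs parallel to the paper's proof, but your norm step contains a genuine gap, and it begins with the claim that the radial coefficient is ``purely cubic''. Since $s$ is only a Lebesgue point of $\rho$, Lemma~\ref{l:IJ} gives $\II_s(\pm\e)=\tfrac12\rho(s)\e^2+o(\e^2)$ and nothing finer, so in the decomposition $\br(s+\e)-\br(s-\e)=A(\e)\br(s)+B(\e)\br'(s)$ one only gets
$$A(\e)=-\bigl(\II_s(\e)-\II_s(-\e)\bigr)-\tfrac13\psi(s)\rho(s)^2\e^3+o(\e^3)=o(\e^2),$$
not $A(\e)=O(\e^3)$: the difference $\II_s(\e)-\II_s(-\e)$ of two quantities, each known only up to $o(\e^2)$, can be of size $\e^2/\log(1/\e)$ while $s$ remains a Lebesgue point. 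Hence $\lambda=A(\e)/B(\e)$ is merely $o(\e)$, not $O(\e^2)$, and this destroys the precision of your norm expansion: smoothness of $X$ yields $\|\br'(s)+\lambda\br(s)\|=1+\psi(s)\lambda+o(\lambda)$ and nothing stronger, because improving $o(\lambda)$ to $O(\lambda^2)$ would require second-order regularity of the sphere at the point $\br'(s)=\br(\varphi(s))$, i.e.\ differentiability of $\br'$ at $\varphi(s)$ --- which is not among the hypotheses (they concern the point $s$ only). Multiplying back by $B(\e)\approx2\e$, the uncontrolled error is $B(\e)\cdot o(\lambda)=o(A(\e))$, and $o(A(\e))$ with $A(\e)=o(\e^2)$ need not be $o(\e^3)$: with $A(\e)\sim\e^2/\log(1/\e)$ and a logarithmic modulus of continuity of $\br'$ at $\varphi(s)$, the error is of order $\e^2/\log^2(1/\e)\gg\e^3$. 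So exactly the cubic coefficient the lemma is about cannot be isolated by this route. Your identification $x^*(\br(s))=\Tau(s)/\Rho(s)=\psi(s)$ is correct and illuminating, but your parenthetical alternative ($\mu=1+\psi(s)\lambda+O(\e^4)$ with error $o(\e^2)$) silently assumes precisely the missing second-order control at $\varphi(s)$.

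The paper never linearizes the norm. It writes $\br(s+\e)-\br(s-\e)=\|\br(s+\e)-\br(s-\e)\|\cdot\br'(s+\delta)$ for a mean-value parameter $\delta\in(-\e,\e)$, expands $\br'(s+\delta)$ \emph{around $s$} by Lemma~\ref{l:r'} (legitimate, since differentiability of $\br'$ and $\psi$ at $s$ is assumed), and then solves the two resulting coordinate equations for the unknowns $\delta$ and $n(\e)=\|\br(s+\e)-\br(s-\e)\|-2\e$ by a five-step bootstrap. There the dangerous quantity $\II_s(\e)-\II_s(-\e)$ appears \emph{exactly} on both sides --- once directly, and once through the value of $I_s(\delta)$ extracted from the radial equation --- so it cancels identically, and every residual error carries an extra factor of $\e$, which is what produces an honest $o(\e^3)$ remainder. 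To repair your argument you would have to keep $\II_s(\e)-\II_s(-\e)$ as an exact term and replace the abstract $o(\lambda)$ of the supporting-functional expansion by an expansion of the intermediate tangent $\br'(s+\delta)$ around $s$; at that point you will have reconstructed the paper's proof.

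One remark in your favour: the check against Example~\ref{ex:H} is worth carrying out, because $2|\sin\e|=2\e-\tfrac13\e^3+o(\e^3)$ forces the universal constant to be $c=\tfrac13$. This agrees with the final line of the paper's own proof, which arrives at $2\e+\tfrac13(\psi'(s)-1)\rho(s)\e^3+o(\e^3)$, but not with the lemma as stated, where the factor $\tfrac13$ has been lost (the limit formula should read $\psi'(s)=1+\lim_{\e\to0}\frac{3(\|\br(s+\e)-\br(s-\e)\|-2\e)}{\rho(s)\e^3}$). So your sanity check exposes a real internal inconsistency of the paper rather than an error in your normalization.
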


\begin{proof} Since the function $\bF$ is continuously differentiable, for a small $\e$ there exists a number $\delta\in(-\e,\e)$ such that 
\begin{equation}\label{eq:ed}
\br(s+\e)-\br(s-\e)=\|\br(s+\e)-\br(s-\e)\|\cdot\br'(s+\delta).
\end{equation}
By Lemmas~\ref{cl:r} and \ref{l:IJ}, $$
\begin{aligned}
&\br(s+\e)-\br(s-\e)=(\br(s+\e)-\br(s))-(\br(s-\e)-\br(s))=\\
&=\br(s)\big(\II_s(\e)-\II_s(-\e)+\psi(s)\rho(s)(\JJ_s(\e)-\JJ_s(-\e))+o(\JJ_s(\e)-\JJ_s(-\e))\big)+\\
&\quad+\br'(s)\big(2\e+\psi(s)(\II_s(\e){-}\II_s({-}\e))+(\psi'(s){-}1{+}\psi(s)^2\rho(s))(\JJ_s(\e){-}\JJ_s({-}\e))+o(\JJ_s(\e){-}\JJ_s({-}\e))\big)=\\
&=-\br(s)\big(\II_s(\e){-}\II_s(-\e)+\tfrac13\psi(s)\rho(s)^2\e^3+o(\e^3)\big)+\\
&\quad+\br'(s)\big(2\e+\psi(s)(\II_s(\e){-}\II_s({-}\e))+\tfrac13(\psi'(s)-1+\psi(s)^2\rho(s))\rho(s)\e^3+o(\e^3)\big).
\end{aligned}
$$
On the other hand, Lemmas~\ref{l:r'} and \ref{l:IJ} imply
$$
\begin{aligned}
\br'(s+\delta)&=-\br(s)\big(I_s(\delta)+\psi(s)\rho(s)J_s(\delta)+o(J_s(\delta))\big)+\\
&\quad+\br'(s)\big(1+\psi(s)I_s(\delta)+(\psi'(s)-1+\psi(s)^2\rho(s))J_s(\delta)+o(J_s(\delta))\big)=\\
&=-\br(s)\big(I_s(\delta)+O(\delta^2)\big)+\br'(s)\big(1+\psi(s)I_s(\delta)+O(\delta^2)\big).
\end{aligned}
$$
Writing the vector equation (\ref{eq:ed}) in coordinates in the basis $\br(s),\br'(s)$, we obtain two equations
\begin{equation}\label{first1}
\II_s(\e)-\II_s(-\e)+\tfrac13\psi(s)\rho^2(s)\e^3+o(\e^3)=\|\br(s+\e)-\br(s-\e)\|\cdot \big(I_s(\delta)+O(\delta^2)\big)
\end{equation}
and
\begin{multline}\label{second2}
2\e+\psi(s)(\II_s(\e)-\II_s(-\e))+\tfrac13(\psi'(s)-1+\psi(s)^2\rho(s))\rho(s)\e^3+o(\e^3)=\\
=\|\br(s+\e)-\br(s-\e)\|\cdot\big(1+\psi(s)I_s(\delta)+O(\delta^2)\big).
\end{multline}

Now we shall derive asymptotic formulas for $\|\br(s+\e)-\br(s-\e)\|$ improving the precision of the expansions in the following five steps.
\smallskip

1. The equation (\ref{second2}) implies that $2\e(1+o(1))=\|\br(s+\e)-\br(s-\e)\|\cdot (1+o(1))$ and hence $\|\br(s+\e)-\br(s-\e)\|=2\e(1+o(1))$. So, we can write $\|\br(s+\e)-\br(s-\e)\|=2\e+n(\e)$ where $n(\e)=o(\e)$.
\smallskip

2. Observe that $$I_s(\delta)+O(\delta^2)=\rho(s)\delta+o(\delta)=\rho(s)\delta(1+o(1))=I_s(\delta)(1+o(1)).$$
After substitution of $\|\br(s+\e)-\br(s-\e)\|=2\e(1+o(1))$ into the equation (\ref{first1}), we obtain that
$$
\begin{aligned}
I_s(\delta)&=\tfrac1{2\e}\big(\II_s(\e)-\II_s(-\e)+O(\e^3)\big)(1+o(1))=\\
&=\tfrac1{2\e}\big((\tfrac12\rho(s)\e^2+o(\e^2))-(\tfrac12\rho(s)(-\e)^2+o(\e^2))+O(\e^3)\big)(1+o(1))=o(\e)
\end{aligned}
$$and hence $\rho(s)(1+o(1))\delta=I_s(\delta)=o(\e)$ and $\delta=o(\e)$.
\smallskip

3. After substitution of $I_s(\delta)$ into the equation (\ref{second2}), we obtain that 
$$
\begin{aligned}
&2\e+\psi(s)(\II_s(\e)-\II_s(-\e))+O(\e^3)=\\
&=(2\e+n(\e))\big(1+\tfrac{\psi(s)(1+o(1))}{2\e}(\II_s(\e)-\II_s(-\e)+O(\e^3)\big)=\\
&=2\e+n(\e)+\psi(s)(1+o(1))(\II_s(\e)-\II_s(-\e)+O(\e^3))=\\
&=2\e+n(\e)+\psi(s)(\II_s(\e)-\II_s(-\e))+o(\e^2)
\end{aligned}
$$
and hence $n(\e)=o(\e^2)$.

4. After substitution of $n(\e)=o(\e^2)$ and $\delta=o(\e)$ into the equation (\ref{first1}), we obtain that $$\II_s(\e)-\II_s(-\e)+\tfrac13\psi(s)\rho(s)^2\e^3+o(\e^3)=
(2\e+o(\e^2))(I_s(\delta)+o(\e^2))$$and hence
$$I_s(\delta)=\frac{1+o(\e^2)}{2\e}\big(\II_s(\e)-\II_s({-}\e)+\tfrac13\psi(s)\rho(s)^2\e^3+o(\e^3)\big)=\tfrac1{2\e}(\II_s(\e)-\II_s({-}\e))+\tfrac16\psi(s)\rho(s)^2\e^2+o(\e^2).$$
\smallskip

5. Finally, put this $I_s(\delta)$  and also $n(\e)=o(\e^2)$ and $\delta=o(\e)$ into the equation (\ref{second2}) and obtain the equation
$$
\begin{aligned}
&2\e+\psi(s)(\II_s(\e)-\II_s(-\e))+\tfrac13(\psi'(s)-1+\psi(s)^2\rho(s))\rho(s)\e^3+o(\e^3)=\\
&=(2\e+n(\e))\cdot 
\big(1+\tfrac{\psi(s)}{2\e}(\II_s(\e)-\II_s(-\e))+\tfrac16\psi(s)^2\rho(s)^2\e^2+o(\e^2))+O(\delta^2)\big)=\\
&=2\e+n(\e)+\psi(s)(\II_s(\e)-\II_s(-\e))+\tfrac13\psi(s)^2\rho(s)^2\e^3+o(\e^3)
\end{aligned}
$$
implying the desired equation
$$\|\br(s+\e)-\br(s-\e)\|=2\e+n(\e)=2\e+\tfrac13(\psi'(s)-1)\rho(s)\e^3+o(\e^3).
$$
\end{proof}

Next, we show how to calculate the derivative $\psi'(s)$ at points $s\in\Omega_\rho$ with $\rho(s)=0$. 

A point $s\in\IR$ is defined to be 
\begin{itemize}
\item {\em $I$-null} if $I_s(\e)=0=I_s(-\e)$ for some non-zero $\e$;
\item {\em $I$-positive} if $I_s(\e)\ne 0$ for all nonzero $\e$.
\end{itemize}
It is easy to see that each $I$-null point is a Lebesgue point of the function $\rho$.

\begin{lemma}\label{l:I-null} If a point $s\in\IR$ is $I$-null, then $\psi'(s)=1$.
\end{lemma}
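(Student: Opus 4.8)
The plan is to show that the $I$-null condition forces $\rho$ to vanish in a neighbourhood of $s$, which makes the curve $\br$ locally straight, and then to read off $\psi$ directly from the supercurvatures. First I would unpack the definition. Assuming $\e>0$ (the case $\e<0$ is symmetric), the equalities $I_s(\e)=0=I_s(-\e)$ together with $\int_0^{-\e}\rho(s+u)\,du=-\int_{-\e}^0\rho(s+u)\,du$ give $\int_{-\e}^{\e}\rho(s+u)\,du=0$. Since $\rho\ge0$ by Lemma~\ref{l:rho}, this forces $\rho=0$ almost everywhere on $[s-\e,s+\e]$; in particular $s$ is a Lebesgue point of $\rho$ with $\rho(s)=0$.

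Next I would exploit the local flatness. Using the identity $\br''=-\rho{\cdot}\br+\rho\psi{\cdot}\br'$ and $\rho=0$ a.e.\ on the interval, I get $\br''=0$ a.e.\ there. Because $X$ is absolutely smooth, $\br'$ is locally absolutely continuous (Lemma~\ref{l:r}(7)), so integrating $\br''$ shows that $\br'$ is constant, equal to $\be:=\br'(s)$, on $[s-\e,s+\e]$, and hence $\br(t)=\br(s)+(t-s)\be$ is affine there.

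The key geometric step concerns the phase shift $\varphi$. From $\br'(t)=\br(\varphi(t))$ and $\br'(t)=\be$ I obtain $\br(\varphi(t))=\be$ for all $t\in[s-\e,s+\e]$. Since $\br$ parameterizes the simple closed curve $S_X$ with $\br(s+L)=-\br(s)$ (Lemma~\ref{l:r}(1)), it is injective on each half-open interval of length $2L$, so the level set $\br^{-1}(\be)$ is a discrete subset of $\IR$. As $\varphi$ is continuous (Lemma~\ref{l:RT}(1)), its restriction to the connected interval must take values in this discrete set and is therefore constant, say $\varphi\equiv v$ on $[s-\e,s+\e]$ (possibly after shrinking $\e$ so that $v$ stays in the admissible window $(t,t+2L)$). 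This is the step I expect to require the most care: it hinges on the injectivity of $\br$ on a period and on the continuity of $\varphi$, and one must verify that the shrinking of $\e$ is harmless.

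Finally I would feed this into the definition of the supercurvatures. The relation $\br'(v)=\br'(\varphi(t))=-\Rho(t)\br(t)+\Tau(t)\br'(t)$ becomes, after substituting $\br(t)=\br(s)+(t-s)\be$ and $\br'(t)=\be$,
$$\br'(v)=-\Rho(t)\br(s)+\big(\Tau(t)-\Rho(t)(t-s)\big)\be.$$
Writing the fixed vector $\br'(v)=P_0\br(s)+T_0\be$ in the basis $\{\br(s),\be\}$ and comparing coefficients yields $\Rho(t)=-P_0$, a constant that is nonzero by Lemma~\ref{l:RT}(2), and $\Tau(t)=T_0-P_0(t-s)$. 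Therefore $\psi(t)=\frac{\Tau(t)}{\Rho(t)}=(t-s)-\frac{T_0}{P_0}$ is affine with slope $1$ on the interval, so $\psi$ is differentiable at $s$ and $\psi'(s)=1$, as claimed.
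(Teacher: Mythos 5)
Your proof is correct and follows essentially the same route as the paper's: the $I$-null condition forces $\rho\equiv 0$ on a neighbourhood of $s$, hence $\br''=0$ there, so $\br'$ is constant and the phase shift $\varphi$ is constant (your justification via discreteness of $\br^{-1}(\be)$ and continuity of $\varphi$ is a careful version of what the paper merely asserts), and the supercurvature identity in the basis $\{\br(s),\br'(s)\}$ then pins down $\psi$. The one genuine difference is the finish: the paper differentiates the identity $\br'(\varphi(s))=-\Rho(s+t)\br(s+t)+\Tau(s+t)\br'(s+t)$ at $t=0$, which implicitly presupposes that $\Rho$ and $\psi$ are differentiable at $s$, whereas your comparison of coefficients shows directly that $\Rho$ is constant and $\psi$ is affine with slope $1$ near $s$, so differentiability of $\psi$ at $s$ is \emph{established} rather than assumed --- a slightly cleaner argument reaching the same conclusion.
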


\begin{proof} Since $s$ is $I$-null, there exists $\e>0$ such that $\int_0^\e\rho(s+t)\,dt=0=\int_0^\e\rho(s-t)\,dt$. Since the function $\rho$ is non-negative this implies that $\rho$ is equal to zero in the interval $(s-\e,s+\e)$. Then $\tau=\rho\cdot\psi$ is also equal to zero in $(s-\e,s+\e)$ and so is $\br''=-\rho\br+\tau\br'$. Since $\br'$ is absolutely continuous, $\br'$ is constant in the interval $(s-\e,s+\e)$. Since $\br'(s+t)=\br(\varphi(s+t))$, the shift $\varphi$ is constant in the interval $(s-\e,s+\e)$.
Then for every $t\in(-\e,\e)$ we have
$$\br'(\varphi(s))=\br'(\varphi(s+t))={-}P(s+t)\br(s+t)+P(s+t)\psi(s+t)\br'(s+t).$$Taking the derivative at $s$, we obtain the equality
$$
\begin{aligned}
0&=\frac d{dt}\big({-}P(s+t)\br(s+t)+P(s+t)\psi(s+t)\br'(s+t)\big)=\\
&={-}P'(s)\br(s)-P(s)\br'(s)+(P'(s)\psi(s)+P(s)\psi'(s))\br'(s)\big),
\end{aligned}
$$
which implies
$P'(s)=0$ and $P(s)(\psi'(s)-1)=0$. By Lemma~\ref{l:RT}, $P(s)>0$ and hence $\psi'(s)=1$.
\end{proof}

Finally, we consider the case of an $I$-positive point $s\in\IR$. In this case the function $I_s(\e)=\int_0^\e\rho(s+u)\,du$ is strictly positive for positive $\e$ and the function $\II_s(\e)=\int_0^\e I_s(t)\,dt$ is strictly increasing on the interval $(0,\infty)$. Also observe that for any positive $\e$ we get $$\II_s(-\e)=\int_0^{-\e}\int_0^t\rho(s+u)\,du dt=\int_0^\e\int_0^t\rho(s-u)\,dudt,$$
which implies that the function $\II_s$ is strictly decreasing on the interval $(-\infty,0)$. 

Then for any sufficiently small $\e>0$ there are unique numbers $\ii_s^-[\e]<0$ and $\ii_s^+[\e]>0$ such that $$\II_s(\ii_s^-[\e])=\e=\II_s(\ii^+_s[\e]).$$
 The continuity and strict monotonicity of the function $\II_s$ on the intervals $(-\infty,0)$ and $(0,\infty)$ impy that the numbers $\ii^-_s[\e]$ and $\ii^+_s[\e]$ are of order $o(1)$ (so they tend to zero as $\e\to+0$).
 
Observe that $\JJ_s(\ii^-_s[\e])<0<\JJ_s(\ii^+_s[\e])$ and hence the difference $\JJ_s(\ii^+_s[\e])-\JJ_s(\ii^-_s[\e])$ is strictly positive.

\begin{lemma}\label{l:I-positive} Assume that $s$ is a Lebesgue point of the function $\rho$ and the functions $\br'$ and $\psi$ are differentiable at $s$. If $\rho(s)=0$ and $s$ is $I$-positive, then
$$\psi'(s)=1+\lim_{\e\to+0}\frac{\|\br(s+\ii_s^+[\e])-\br(s+\ii^-_s[\e])\|-(\ii_s^+[\e]-\ii_s^-[\e])}{\JJ_s(\ii^+_s[\e])-\JJ_s(\ii^-_s[\e])}.$$
\end{lemma}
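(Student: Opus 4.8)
The plan is to imitate the proof of Lemma~\ref{l:rho-positive}, but with the symmetric endpoints $s\pm\e$ replaced by the asymmetric pair $s+\ii_s^+[\e]$, $s+\ii_s^-[\e]$ determined by $\II_s(\ii_s^+[\e])=\e=\II_s(\ii_s^-[\e])$. The reason this reparameterization is forced on us is exactly that $\rho(s)=0$: the expansions of Lemma~\ref{l:IJ} then have vanishing leading coefficients, so there is no dominant term to lean on, and instead we engineer an \emph{exact} cancellation of the $\II_s$-terms by choosing the two endpoints to give $\II_s$ the same value $\e$.

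Write $a=\ii_s^+[\e]>0$ and $b=\ii_s^-[\e]<0$, and set $\Delta\JJ=\JJ_s(a)-\JJ_s(b)>0$. First I would substitute $\rho(s)=0$ into the formula of Lemma~\ref{cl:r}, apply it at $a$ and at $b$, and subtract. Because $\II_s(a)=\II_s(b)$, the coefficient of $\br(s)$ collapses to a pure error term and that of $\br'(s)$ becomes $(a-b)+(\psi'(s)-1)\Delta\JJ+o(\Delta\JJ)$; here one checks that the two remainders $o(\JJ_s(a))$ and $o(\JJ_s(b))$ combine into $o(\Delta\JJ)$, which holds since $0<\JJ_s(a)\le\Delta\JJ$ and $0<-\JJ_s(b)\le\Delta\JJ$. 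This yields
$$\br(s+a)-\br(s+b)=-\br(s)\,o(\Delta\JJ)+\br'(s)\big((a-b)+(\psi'(s)-1)\Delta\JJ+o(\Delta\JJ)\big).$$
Next, since $\br$ is $C^1$-smooth, I would invoke the chord--tangent relation as in (\ref{eq:ed}): there is $\delta\in(b,a)$ with $\br(s+a)-\br(s+b)=N\cdot\br'(s+\delta)$, where $N=\|\br(s+a)-\br(s+b)\|$. Expanding $\br'(s+\delta)$ via Lemma~\ref{l:r'} with $\rho(s)=0$ and equating coordinates in the basis $\br(s),\br'(s)$ produces a $\br(s)$-equation $o(\Delta\JJ)=-N\big(I_s(\delta)+o(J_s(\delta))\big)$ and a $\br'(s)$-equation $(a-b)+(\psi'(s)-1)\Delta\JJ+o(\Delta\JJ)=N\big(1+\psi(s)I_s(\delta)+(\psi'(s)-1)J_s(\delta)+o(J_s(\delta))\big)$. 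The bootstrap then runs: the $\br'(s)$-equation gives $N=(a-b)(1+o(1))$; the $\br(s)$-equation together with $J_s(\delta)=o(I_s(\delta))$ gives $I_s(\delta)=o(\Delta\JJ/(a-b))$ and hence $J_s(\delta)=o(\Delta\JJ/(a-b))$; feeding these back makes the denominator of the $\br'(s)$-equation equal to $1+o(\Delta\JJ/(a-b))$, whence $N=(a-b)+(\psi'(s)-1)\Delta\JJ+o(\Delta\JJ)$, which rearranges to the asserted formula.

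The hard part will be the bookkeeping of asymptotic orders needed to certify that every cross term is genuinely $o(\Delta\JJ)$, and this rests on two estimates. The first, $J_s(\delta)=o(I_s(\delta))$, follows from $|J_s(\delta)|\le|\delta|\,|I_s(\delta)|$ and $\delta\to 0$. The second, $\Delta\JJ=o(a-b)$, is what legitimizes both $N=(a-b)(1+o(1))$ and the final division; I would prove it by the chain $\JJ_s(a)\le a\,J_s(a)\le a^2 I_s(a)=a\big(\II_s(a)+J_s(a)\big)=a\big(\e+J_s(a)\big)$, observing that $\e\to0$ and $J_s(a)=\int_0^a\rho(s+u)u\,du\to0$ as $a\to0$, so $\JJ_s(a)=o(a)$, and symmetrically $|\JJ_s(b)|=o(|b|)$, giving $\Delta\JJ=o(a)+o(|b|)=o(a-b)$. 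The $I$-positivity of $s$ enters precisely to ensure that $I_s$, and hence $\II_s$, is strictly monotone on each side of $0$, so that $a=\ii_s^+[\e]$ and $b=\ii_s^-[\e]$ are well-defined and nonzero and all the quotients $o(\cdot)$ above remain meaningful.
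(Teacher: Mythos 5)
Your proposal is correct and follows essentially the same route as the paper's own proof: apply Lemma~\ref{cl:r} at the two points $s+\ii_s^+[\e]$, $s+\ii_s^-[\e]$ so that the choice $\II_s(\ii_s^+[\e])=\II_s(\ii_s^-[\e])=\e$ cancels the $\II_s$-terms, then use the chord--tangent relation with Lemma~\ref{l:r'} and bootstrap the two coordinate equations. Your explicit verifications that $\JJ_s(\ii_s^+[\e])-\JJ_s(\ii_s^-[\e])=o(\ii_s^+[\e]-\ii_s^-[\e])$ and that $J_s(\delta)=o(I_s(\delta))$ merely spell out estimates the paper uses implicitly, so this is the same argument, not a different one.
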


\begin{proof} Given any sufficiently small number $\e>0$, consider the numbers $\e_-=\II^-_s[\e]<0$ and $\e_+=\II^+_s[\e]>0$. As we already know the numbers $\e_-,\e_+$ and $$\e_\pm:=\e_+-\e_-$$ are of order $o(1)$.

Taking into account that $\rho(s)=0$, $\II_s(\e_-)=\e=\II_s(\e_+)$ and $\JJ_s(\e_-)<0<\JJ_s(\e_+)$, we can apply Lemma~\ref{cl:r} and conclude that
$$
\begin{aligned}
\br(s+\e_+)-\br(s+\e_-)&=-\br(s)\big(o(\JJ_s(\e_+)-\JJ_s(\e_-))\big)+\\
&\quad+\br'(s)\big(\e_+-\e_-+(\psi'(s)-1+o(1))(\JJ_s(\e_+)-\JJ_s(\e_-))\big).
\end{aligned}
$$
Find $\delta\in (\e_-,\e_+)$ such that 
\begin{equation}\label{eq:pm}
\br(s+\e_+)-\br(s+\e_-)=\|\br(s+\e_+)-\br(s+\e_-)\|\cdot\br'(s+\delta).
\end{equation}
By Lemma~\ref{l:r'},
$$\br'(s+\delta)=-\br(s) I_s(\delta)(1+o(1))+\br'(s)\big(1+(\psi(s)+o(1))I_s(\delta)\big).$$
Writing the vector equation (\ref{eq:pm}) in coordinates we obtain two equations
\begin{equation}\label{first-pm}
o(\JJ_s(\e_+)-\JJ_s(\e_-))=\|\br(s+\e_+)-\br(s+\e_-)\|\cdot I_s(\delta)
\end{equation}
and
\begin{equation}\label{second-pm}
\e_\pm+(\psi'(s)-1+o(1))(\JJ_s(\e_+)-\JJ_s(\e_-))=\|\br(s+\e_+)-\br(s+\e_-)\|\cdot\big(1+(\psi(s)+o(1))I_s(\delta)\big).
\end{equation}

Now we shall derive asymptotic formulas for the norm $\|\br(s+\e_+)-\br(s+\e_-)\|$ improving the precision of the expansions in the following three steps.
\smallskip

1. The equation (\ref{second-pm}) implies that $\e_\pm(1+o(1))=\|\br(s+\e_+)-\br(s+\e_-)\|\cdot (1+o(1))$ and hence $\|\br(s+\e_+)-\br(s+\e_-)\|=\e_\pm(1+o(1))$. So, we can write $\|\br(s+\e_+)-\br(s+\e_-)\|=\e_\pm+n(\e)$ where $n(\e)=o(\e_\pm)$.
\smallskip

2. After substitution of $\|\br(s+\e_+)-\br(s+\e_-)\|=\e_\pm(1+o(1))$ into the equation (\ref{first-pm}), we obtain that
$$
I_s(\delta)=\tfrac1{\e_\pm}o(\JJ_s(\e_+)-\JJ_s(\e_-)).
$$
\smallskip

3. After substitution of $I_s(\delta)$ into the equation (\ref{second-pm}), we obtain that 
\begin{multline*}
\e_\pm+(\psi'(s)-1+o(1))(\JJ_s(\e_+)-\JJ_s(\e_-))=\\
=(\e_\pm+n(\e))\big(1+\tfrac{\psi(s)+o(1)}{\e_\pm}o(\JJ_s(\e_+)-\JJ_s(\e_-))\big)=\\
=\e_\pm+n(\e)+o(\JJ_s(\e_+)-\JJ_s(\e_-)),
\end{multline*}
which implies that $n(\e)=(\psi'(s)-1+o(1))(\JJ_s(\e_+)-\JJ_s(\e_-))$. Then
$$\|\br(s+\e_+)-\br(s+\e_-)\|=(\e_+-\e_-)+(\psi'(s)-1+o(1))(\JJ_s(\e_+)-\JJ_s(\e_-))$$and finally,
$$\psi'(s)=1+\lim_{\e\to+0}\frac{\|\br(s+\e_+)-\br(s+\e_-)\|-(\e_+-\e_-)}{\JJ_s(\e_+)-\JJ_s(\e_-)}.$$
\end{proof}

\begin{lemma}\label{l:rho>0} $\displaystyle\int_0^{L}\rho(s)ds>0.$
\end{lemma}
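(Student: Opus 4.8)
The plan is to argue by contradiction. Suppose $\int_0^L\rho(s)\,ds=0$. By Lemma~\ref{l:rho} the radial curvature $\rho$ is non-negative almost everywhere, and it is locally integrable, so the vanishing of its integral over one period forces $\rho=0$ almost everywhere on $[0,L]$. Since $\rho$ is $L$-periodic (by the periodicity lemma preceding Theorem~\ref{t:main2}), it follows that $\rho=0$ almost everywhere on the whole real line.

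Next I would transfer this degeneracy to the tangential curvature and hence to $\br''$. Rather than going through the quotient curvature $\psi$ (which is not controlled when $\rho$ vanishes), I would use the pointwise bound $|\tau(s)|\le\frac{(C+c)C}{c^2}|\rho(s)|$ supplied by Lemma~\ref{l:RT}(4), valid for every $s\in\ddot\Omega_\br$. Then $\rho=0$ a.e.\ immediately yields $\tau=0$ a.e., and substituting into the defining curvature equation~(\ref{eq}), $\br''=-\rho\cdot\br+\tau\cdot\br'$, gives $\br''=0$ almost everywhere.

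Now I would invoke the regularity coming from absolute smoothness to conclude that $\br'$ is constant. By Lemma~\ref{l:r}(7) the derivative $\br'$ is locally absolutely continuous, hence it is the integral of its own derivative: $\br'(s)-\br'(0)=\int_0^s\br''(t)\,dt=0$ for every $s\in\IR$. Thus $\br'\equiv\br'(0)$ is a constant vector, and consequently $\br$ is the affine map $\br(s)=\br(0)+s\,\br'(0)$.

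Finally, this affine form contradicts the elementary structure of the natural parameterization. On the one hand $\|\br'(0)\|=1$ by Lemma~\ref{l:r}(4), so $\br'(0)\ne0$; on the other hand, feeding the affine expression into the antipodal relation $\br(s+L)=-\br(s)$ from Lemma~\ref{l:r}(1) gives $2\br(0)+(2s+L)\br'(0)=0$ for all $s\in\IR$, which forces $\br'(0)=0$ — a contradiction. (Equivalently, a non-constant affine curve has unbounded image, whereas $\br(\IR)\subseteq S_X$ is bounded.) Hence $\int_0^L\rho(s)\,ds>0$. I do not anticipate a serious obstacle here; the only step requiring care is the passage from $\rho=0$ a.e.\ to $\br''=0$ a.e., which is precisely where the curvature estimate of Lemma~\ref{l:RT}(4) is used to annihilate $\tau$ without any control on $\psi$.
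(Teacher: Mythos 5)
Your proof is correct and follows essentially the same route as the paper's: assume the integral vanishes, use non-negativity and $L$-periodicity to get $\rho=0$ a.e., kill $\tau$ via the bound $|\tau|\le\frac{(C+c)C}{c^2}|\rho|$ from Lemma~\ref{l:RT}, deduce $\br''=0$ a.e., and use local absolute continuity of $\br'$ to conclude $\br$ is affine, contradicting the fact that it parameterizes the sphere. Your only addition is making the final contradiction explicit (via the antipodal relation or boundedness), which the paper leaves as ``$\br$ is contained in a line, which is not true.''
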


\begin{proof} Assuming that $\int_0^{L}\rho(s)ds=0$ and taking into account that $\rho(s)\ge 0$ almost everywhere, we conclude that $\rho(s)=0$ for almost all $s\in[0,L]$.
Taking into account that $\rho(s+L)=\rho(s)$ for every $s\in\ddot\Omega_{\br}$, we conclude that $\rho(s)$ is zero almost everywhere.  
By Lemma~\ref{l:RT}, $|\tau(s)|\le\frac{(C+c)C}{c^2}\rho(s)=0$ and hence $\tau(s)=0$ almost everywhere. Then also $\br''=-\rho\br+\tau\br'$ is zero almost everywhere and by the local absolute continuity of $\br'$, the function $\br'$ is constant and $\br$ is contained in a line, which is not true. This contradiction shows that $\int_0^{L}\rho(s)\,ds>0$.
\end{proof}

\begin{lemma}\label{l:tau0}$\int_0^{L}\tau(s)\,ds=0.$
\end{lemma}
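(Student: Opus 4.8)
The plan is to realise $\tau$ as a logarithmic derivative of the signed area spanned by $\br$ and $\br'$, and then to close the argument with the antiperiodicity relation $\br(s+L)=-\br(s)$. Working in the fixed basis ${\mathbf e}_1,{\mathbf e}_2$ of $X$, I would introduce the (Euclidean) determinant
$$\w(x,y)=({\mathbf e}_1^*x)({\mathbf e}_2^*y)-({\mathbf e}_2^*x)({\mathbf e}_1^*y)$$
and set $A(s)=\w(\br(s),\br'(s))$. Since $\br(s)$ and $\br'(s)$ form a basis of $X$ for every $s$, the continuous function $A$ never vanishes; and since $A(s+L)=\w(-\br(s),-\br'(s))=A(s)$, it is $L$-periodic and hence of constant sign and bounded away from $0$. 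The entire statement will follow once I show that $A$ satisfies the linear equation $A'=\tau\,A$ almost everywhere.

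First I would verify that $A$ is locally absolutely continuous. Its coordinates ${\mathbf e}_i^*\circ\br$ are $C^1$ (absolute smoothness implies smoothness, so $\br$ is $C^1$), while ${\mathbf e}_i^*\circ\br'$ are locally absolutely continuous and bounded by Lemma~\ref{l:r}(7); products of locally bounded, locally absolutely continuous functions remain such (Theorem~7.1.10 \cite{RA}), so $A$ is locally absolutely continuous. Differentiating $A$ on the full-measure set $\ddot\Omega_{\br}$, the two $\w(\br',\br')$ terms cancel by antisymmetry, leaving $A'=\w(\br,\br'')$ almost everywhere. Substituting $\br''=-\rho\br+\tau\br'$ and using $\w(\br,\br)=0$ gives $A'=\tau\,\w(\br,\br')=\tau\,A$ almost everywhere.

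To pass from this ODE to the integral, I would set $B(s)=A(s)\exp(-\int_0^s\tau(u)\,du)$. Both factors are locally absolutely continuous and locally bounded, so $B$ is locally absolutely continuous with $B'=(A'-\tau A)\exp(-\int_0^s\tau)=0$ almost everywhere; a locally absolutely continuous function with a.e. zero derivative is constant, whence $A(s)=A(0)\exp(\int_0^s\tau)$. Evaluating at $s=L$ and using $A(L)=A(0)\neq0$ yields $\exp(\int_0^L\tau)=1$, that is, $\int_0^L\tau(s)\,ds=0$.

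The only real obstacle is the real-analytic bookkeeping, and this is precisely where absolute smoothness enters: one must know that $A$ is absolutely continuous (so that the ``zero derivative implies constant'' principle applies) and that $A'=\tau A$ holds almost everywhere rather than merely on $\ddot\Omega_{\br}$. Once the local absolute continuity of $\br'$ from Lemma~\ref{l:r}(7) is available, these points are routine, and the algebraic heart of the proof is simply that $\tau$ is the logarithmic derivative of the signed area $\w(\br,\br')$, which is forced to be periodic by $\br(s+L)=-\br(s)$.
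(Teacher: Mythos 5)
Your proof is correct and is essentially the paper's own argument: your signed area $A(s)=\w(\br(s),\br'(s))$ is exactly the Wronskian $W=xy'-x'y$ of the coordinate functions used in the paper, and both proofs rest on the identity $W'=\tau W$ almost everywhere together with $W(L)=W(0)$ forced by the antiperiodicity $\br(s+L)=-\br(s)$. The only cosmetic difference is that you integrate via the factor $\exp(-\int_0^s\tau)$ and the ``absolutely continuous with a.e.\ zero derivative implies constant'' principle, whereas the paper integrates the logarithmic derivative $(\ln|W|)'=\tau$ directly.
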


\begin{proof} Let $\be_1^*,\be_2^*:X\to\IR$ be the biorthogonal linear functionals to the basis $\be_1,\be_2$, which means that $\be_1^*(\be_1)=1=\be_2^*(\be_2)$  and $\be_1^*(\be_2)=0=\be_2^*(\be_1)$. 

Consider the functions $x=\be_1^*\circ \br$ and $y=\be_2^*\circ\br$, and observe that $\br=x{\cdot}\be_1+y{\cdot}\be_2$ and $\br'=x'{\cdot}\be_1+y'{\cdot}\be_2$. The linear independence of the vectors $\br$ and $\br'$ implies that the Wronskian $$W=\begin{vmatrix}x&y\\x'&y'\end{vmatrix}=xy'-x'y$$ of the functions $x,y$ does not take the value zero.  The absolute smoothness of the Banach space $X$ and Lemma~\ref{l:iso} imply the local absolute continuity of the vector-function $\br'$, its  coordinate functions $x'$ and $y'$, the Wronskian $W$ and its logarithm $\ln |W|$.

Consider the derivative of $W$:
$$W'=(xy'-x'y)'=xy''-x''y=x(-\rho y+\tau y')-(-\rho x+\tau x')y=\tau\cdot W.$$
Since $W$ is nowhere equal to zero, this implies $(\ln |W|)'=\frac{W'}{W}=\tau$ and by the local absolute continuity of $\ln |W|$, $$
\begin{aligned}
\int_0^{L}\tau(x)\,dx=\;&\ln |W(L)|-\ln |W(0)|=\\
&\ln|x(L)y'(L)-x'(L)y(L)|-\ln|x(0)y'(0)-x'(0)y(0)|=\\
&\ln|(-x(0)(-y'(0))-(-x'(0))(-y(0))|-\ln|x(0)y'(0)-x'(0)y(0)|=0.
\end{aligned}
$$
\end{proof}

Now we can compose all pieces together and derive a formula for calculating the tangential curvature $\tau$ at almost all points of the real line. The definition of an $I$-null point implies that the set $Z_\rho$ of all $I$-null points is open in the real line and $Z_\rho\subseteq\Omega_\rho$. It follows that the set $\ddot Z_\rho$ of boundary points of connected components of the open set $Z_\rho$ is at most countable. It is easy to see that each point $s\in\Omega_\rho\setminus(Z_\rho\cup\ddot Z_\rho)$ with $\rho(s)=0$ is $I$-positive.   

\begin{lemma}\label{l:tau} For any point $s\in \Omega_\rho\cap \dot\Omega_{\psi}\cap\dot\Omega_{\br'}\setminus\ddot Z_\rho$, the tangential curvature $\tau(s)$  can be calculated by the formula:
$$
\begin{aligned}
\tau(s)=\;&\rho(s)\cdot\int_0^s\psi'(v)\,dv-\frac{\int_0^{L}\int_0^u\rho(s){\cdot}\psi'(v)\,dv\,du}{\int_0^{L}\rho(u)\,du}, 
\end{aligned}
$$
where 
$$\psi'(s)=\begin{cases}
1&\mbox{if $s$ is $I$-null};\\
\smallskip
1+\lim\limits_{\e\to0}\dfrac{\|\bF(s+\e)-\bF(s-\e)\|-2\e}{\rho(s)\e^3}&\mbox{if  $\rho(s)>0$};\\
\smallskip
1+\lim_{\e\to+0}\dfrac{\|\br(s+\II^+_s[\e])-\br(s+\II^-_s[\e])\|-(\II^+_s[\e]-\II^-_s[\e])}{\JJ_s(\II^+[\e])-\JJ_s(\II^-_s[\e])}&\mbox{if $s$ is $I$-positive}.
\end{cases}
$$
\end{lemma}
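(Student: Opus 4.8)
The plan is to reconstruct $\tau$ from the identity $\tau=\rho\cdot\psi$ together with the three pointwise expressions for $\psi'$ already established, the point being that the right-hand side of the claimed formula involves only the radial curvature $\rho$ (computable from the metric by Lemma~\ref{l:rho}) and the derivative $\psi'$ (computable from the metric by Lemmas~\ref{l:rho-positive}, \ref{l:I-positive} and~\ref{l:I-null}). The single genuinely new ingredient is the recovery of the value of $\psi$ itself, and not merely of its derivative $\psi'$, from a suitable normalization.

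First I would use Lemma~\ref{l:RT}(3): the quotient curvature $\psi=\frac{\Tau}{\Rho}$ is locally absolutely continuous, so it is the integral of its derivative, $\psi(s)=\psi(0)+\int_0^s\psi'(v)\,dv$ for all $s\in\IR$. Multiplying by $\rho(s)$ and using $\tau=\rho\cdot\psi$ gives
$$\tau(s)=\rho(s)\,\psi(0)+\rho(s)\int_0^s\psi'(v)\,dv,$$
so the whole problem collapses to determining the single constant $\psi(0)$.

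To pin down $\psi(0)$ I would integrate this identity over one half-period and invoke the vanishing of the total tangential curvature (Lemma~\ref{l:tau0}): since $\int_0^L\tau(u)\,du=0$,
$$0=\psi(0)\int_0^L\rho(u)\,du+\int_0^L\rho(u)\Big(\int_0^u\psi'(v)\,dv\Big)\,du.$$
By Lemma~\ref{l:rho>0} the factor $\int_0^L\rho(u)\,du$ is strictly positive, hence this equation has a unique solution $\psi(0)$; substituting it back into the previous display yields the claimed formula for $\tau(s)$. This is exactly where the positivity $\int_0^L\rho>0$ and the identity $\int_0^L\tau=0$ are both used, and it is also where absolute smoothness is indispensable, since local absolute continuity of $\psi$ is what allows $\psi$ to be recovered from $\psi'$ via a single integration constant.

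It remains to justify the three-case description of $\psi'$ on the admissible set $\Omega_\rho\cap\dot\Omega_{\psi}\cap\dot\Omega_{\br'}\setminus\ddot Z_\rho$. For such $s$ exactly one alternative holds: if $s$ is $I$-null then $\psi'(s)=1$ by Lemma~\ref{l:I-null}; if $\rho(s)>0$ the first limit formula is Lemma~\ref{l:rho-positive}; and if $\rho(s)=0$ with $s$ not $I$-null, then the observation preceding the statement forces $s$ to be $I$-positive, so the second limit formula is Lemma~\ref{l:I-positive}. The main thing to verify is that these cases are genuinely exhaustive on the admissible set and do not overlap in a way producing conflicting values, which is precisely the reason for excising the at most countable set $\ddot Z_\rho$ of boundary points of components of the open $I$-null set $Z_\rho$. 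Once exhaustiveness is confirmed, no further computation is required: every ingredient on the right-hand side has already been expressed through distances on $S_X$, which is the whole purpose of the formula.
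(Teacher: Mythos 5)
Your proposal is correct and follows essentially the same route as the paper's own proof: write $\psi(s)=\psi(0)+\int_0^s\psi'(v)\,dv$ using the local absolute continuity of the quotient curvature (Lemma~\ref{l:RT}), multiply by $\rho$ via $\tau=\rho\psi$, determine the constant $\psi(0)$ from $\int_0^L\tau=0$ (Lemma~\ref{l:tau0}) together with $\int_0^L\rho>0$ (Lemma~\ref{l:rho>0}), and cite Lemmas~\ref{l:rho-positive}, \ref{l:I-null}, \ref{l:I-positive} for the three-case formula for $\psi'$. Your explicit invocation of Lemma~\ref{l:rho>0} to justify dividing by $\int_0^L\rho(u)\,du$, and your remark on the exhaustiveness of the three cases on the admissible set, only make explicit what the paper leaves implicit.
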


\begin{proof} The formula for calculating $\psi'(s)$ follows from Lemmas~\ref{l:rho-positive}, \ref{l:I-null}, \ref{l:I-positive}. The local absolute continuity of the quotient curvature $\psi=\frac\Tau\Rho$ implies   
$
\psi(s)=\psi(0)+\int_{0}^s\psi'(v)\,dv
$
for any $s\in\IR$.
By Lemma~\ref{l:RT}, $$\tau(s)=\rho(s)\psi(s)=\rho(s)\psi(0)+\int_0^s\rho(s)\psi'(v)\,dv.$$

To find the value of $\psi(0)$, we apply Lemma~\ref{l:tau0} and obtain
$$0=\int_0^{L}\tau(u)\,du=\psi(0)\cdot \int_0^{L}\rho(u)\,du+\int_0^{L}\int_0^u\rho(u)\cdot\psi'(v)\,dv\,du$$
and finally
$$\psi(0)=-\frac{\int_0^{L}\int_0^u\rho(u){\cdot}\psi'(v)\,dv\,du}{\int_0^{L}\rho(u)du}.$$
\end{proof}

\section{Proof of Theorem~\ref{t:main}}\label{s:main}

Let $f:S_X\to S_Y$ be an isometry between the unit spheres of two absolutely smooth 2-dimensional Banach spaces $X,Y$. Fix any basis $\mathbf e_1,\mathbf e_2\in S_X$ for the Banach space $X$. 
 By the result of Tingley \cite{Tingley}, $f(-x)=-f(x)$ for any element $x\in S_X$. This implies that the vectors $f(\mathbf e_1),f(\mathbf e_2)$ form a basis in  the Banach space $Y$. Let $\br_X:\IR\to S_X$ and $\br_Y:\IR\to S_Y$ be the natural parameterizations of the unit spheres of the 2-based Banach spaces $X,Y$.
Lemmas~\ref{l:M} and \ref{l:iso} imply that $f\circ \br_X$ is a natural parameterization of the sphere $S_Y$ and hence $f\circ\br_X=\br_Y\circ\Phi$ for some isometry $\Phi$ of the real line. 
Taking into account that $f\circ\br_X(0)=f(\mathbf e_1)=\br_Y(0)$, we conclude that $\Phi(0)=0$ and hence $\Phi(x)=ax$ for some $a\in \{-1,1\}$. Let $L$ be the smallest positive real number such that $\br_X(L)=-\br_X(0)=-\mathbf e_1$.  Taking into account that $\mathbf e_2\in\mathbf r_X([0,L])$ and $f(\mathbf e_2)\in \mathbf r_Y([0,L])$, we conclude that $a=1$ and hence $\Phi$ is the identity isometry of the real line.
 
Therefore, $f\circ\mathbf r_X=\mathbf r_Y$. Taking into account that $f$ is an isometry, we can apply Lemmas~\ref{l:rho} and \ref{l:tau} and conclude that $\rho_X(s)=\rho_Y(s)$ and $\tau_X(s)=\tau_Y(s)$ for almost all $s\in\IR$. 
 
Take the linear isomorphism $F:X\to Y$ such that $F(\br_X(0))=\br_Y(0)$ and $F(\br'_X(0))=\br'_Y(0)$. 
Consider the curves $\br_Y:\IR\to Y$ and $\by=F\circ \br_X:\IR\to Y$ in the Banach space $Y$. 

By definitions of the curvatures $\rho_Y$ and $\tau_Y$, we have the equation 
$$\br_X''(s)=-\rho_Y(s)\br_Y(s)+\tau_Y(s)\br_Y(s)=-\rho_X(s)\br_Y(s)+\tau_X(s)\br'_Y(s)$$
holding for almost all $s\in\IR$.

Applying the linear operator $F$ to the equation
$$\br''_X(s)=-\rho_X(s){\cdot}\br_X(s)+\tau_X(s){\cdot}\br'_X(s),$$
we obtain the equation 
\begin{equation}\label{eq:bq}
\by''(s)=-\rho_X(s){\cdot}\by(s)+\tau_X(s){\cdot}\by'(s).
\end{equation}
Therefore, the $AC^1$-smooth functions $\br_Y$ and $\by$ satisfy the same differential equation (\ref{eq:bq}) and have the same initial positions:
$$
\by(0)=F\circ\br_X(0)=f(\mathbf e_1)=\br_Y(0)\quad\mbox{and}\quad
\by'(0)=F\circ\br'_X(0)=\br_Y'(0).
$$
Now the Uniqueness Theorem  \cite[65.2]{TP} for solutions of  linear differential equations of second order (see also the proof of Theorem~\ref{t:main2}) guarantees that $\by=\br_Y$ and hence  $$F\circ \br_X(s)=\by(s)=\br_Y(s)=f\circ\br_X(s)$$ for any $s\in \IR$. Therefore, the linear operator $F$ extends the isometry $f$. Since $F(S_X)=f(S_X)=S_Y$ we conclude that $F(B_X)=B_Y$ and hence $F:X\to Y$ is a linear isometry.

\section{Acknowledgements} The author would like to express his sincere thanks to Olesia Zavarzina (whose  interesting talk at the conference \cite{Zav}, \cite{KZ}  attracted the author's attention to Tingley's problem), to Vladimir Kadets for many inspiring discussions on this problem, and to the {\tt Mathoverflow} user Fedor Petrov for suggesting the idea\footnote{{\tt https://mathoverflow.net/a/344856/61536}} of the proof of Lemma~\ref{l:p}(2--4). 

\newpage

\end{document}